\newcommand{\corb}[1]{\textcolor{black}{#1}}
\newcommand{\mcA}{{\mathcal A}}
\newcommand{\mcB}{{\mathcal B}}
\newcommand{\mcC}{{\mathcal C}}
\newcommand{\mcD}{{\mathcal D}}
\newcommand{\mcE}{{\mathcal E}}
\newcommand{\mcF}{\mathcal{F}}
\newcommand{\mcG}{\mathcal{G}}
\newcommand{\mcI}{{\mathcal I}}
\newcommand{\mcH}{{\mathcal H}}
\newcommand{\mcO}{{\mathcal O}}
\newcommand{\mcP}{{\mathcal P}}
\newcommand{\mcQ}{{\mathcal Q}}
\newcommand{\mcR}{{\mathcal R}}
\newcommand{\mcS}{\mathcal{S}}
\newcommand{\Real}{\mathop{\text{\rm Re}}}
\newcommand{\Imag}{\mathop{\text{\rm Im}}}
\newcommand{\jj}{\mathbf{j}}
\newcommand{\pp}{\mathbf{p}}
\newcommand{\bw}{{\bf w}}
\newcommand{\by}{{\bf y}}
\newcommand{\bz}{{\bf z}}
\newcommand{\bY}{\mathbf{Y}}
\newcommand{\ii}{\mathbf{i}}
\newcommand{\mm}{\mathbf{m}}
\newcommand{\bys}{{\bf y}_s}
\newcommand{\byf}{{\bf y}_f}
\newcommand{\byo}{{\bf y}_{0}}
\newcommand{\byd}{\delta {\bf y}}
\newcommand{\bo}{{\bf 0}}
\newcommand{\oone}{\boldsymbol{1}}
\newcommand{\gset}{{\mathbb G}}
\newcommand{\Nset}{{\mathbb N}}
\newcommand{\sset}{{\mathbb S}}
\newcommand{\Pol}{\mathbb{P}}
\newcommand{\eset}[1]{{\mathbb E} \left[ #1 \right] }
\newcommand{\esets}[1]{{\mathbb E} [ #1 ] }
\newcommand{\Grad}{\nabla}
\newcommand{\var}{\text{\rm var}}
\newcommand{\lv}{w}
\newtheorem{assumption}{Assumption}{\bf}{\rm}
\newtheorem{lemma}{Lemma}
\newtheorem{theorem}{Theorem}
\newtheorem{remark}{Remark}
\newtheorem{problem}{Problem}
\newtheorem{definition}{Definition}
\newcommand{\tu}{(\tilde{u} \circ F)(\cdot,\by)}
\newcommand{\tus}{(\tilde{u} \circ F)(\cdot,\by + s \delta \by)}
\newcommand{\tf}{(f \circ F)(\cdot,\by)}
\newcommand{\tw}{{\bf w}(\by)}  
\newcommand{\0}{\mathbf{0}}
\DeclareMathOperator*{\esssup}{ess\,sup}
\DeclareMathOperator*{\essinf}{ess\,inf}
\begin{document}

\title[Hybrid collocation perturbation for PDEs with random
  domains]{A hybrid collocation-perturbation approach for PDEs with
  random domains}

\author{Julio E. Castrill\'{o}n-Cand\'{a}s
  \and Fabio Nobile
  \and Ra\'{u}l F. Tempone}

\begin{abstract}
In this work we consider the problem of approximating the statistics
of a given Quantity of Interest (QoI) that depends on the solution of
a linear elliptic PDE defined over a random domain parameterized by
$N$ random variables. \corb{The random domain is split into large and
  small variations contributions.  The large variations are
  approximated by applying a sparse grid stochastic collocation
  method. The small variations are approximated with a stochastic
  collocation-perturbation method.}  Convergence rates for the
variance of the QoI are derived and compared to those obtained in
numerical experiments. Our approach significantly reduces the
dimensionality of the stochastic problem. \corb{The computational cost
  of this method increases at most quadratically with respect to the
  number of dimensions of the small variations. Moreover, for the case
  that the small and large variations are independent the cost
  increases linearly.}
\end{abstract}
\keywords{Uncertainty Quantification \and
  Stochastic Collocation \and Perturbation \and Stochastic PDEs \and
  Finite Elements \and Complex Analysis \and Smolyak Sparse Grids}

\maketitle

\section{Introduction}

The problem of design under the uncertainty of the underlying domain
can be encountered in many real life applications. For example, in
semiconductor fabrication the underlying geometry becomes increasingly
uncertain as the physicals scales are reduced \cite{Zhenhai2005}. This
uncertainty is propagated to an important Quantity of Interest (QoI)
of the semiconductor circuit. If the variance of the capacitance is
high this could lead to low yields during the manufacturing
process. It is important to quantify the uncertainty of the QoI in the
circuit to be able to maximize yields. This will have a direct impact
in reducing the costly and time-consuming design cycle. Other examples
included graphene nano-sheet fabrication \cite{Hout:2010}. In this
paper we focus on the problem of how to efficiently compute the
statistics of the QoI given uncertainty in the underlying geometry.

Uncertainty Quantification (UQ) methods applied to Partial
Differential Equations (PDEs) with random geometries can be mostly
divided into collocation and perturbation approaches. For large
deviations of the geometry the collocation method
\cite{Chauviere2006,Fransos2008,Tartakovsky2006,Castrillon2013} is
well suited. In addition, in \cite{Castrillon2013,Harbrecht2014} the
authors derive error estimates of the solution with respect to the
number of stochastic variables in the geometry description. However,
this approach is effective for a moderate number of stochastic
variables.  In contrast, the perturbation approaches introduced in
\cite{Harbrecht2008,Zhenhai2005} are efficient for high dimensional
small perturbations of the domain.

\corb{We represent the domain in terms of a series of random variables
  and then remap the corresponding PDE to a deterministic domain with
  random coefficients.} The random geometry is split into small and
large deviations.  A collocation sparse grid method is used to
approximate the contribution to the QoI from the first large
deviations $N_s$ terms of the stochastic domain expansion. Conversely,
the contribution of the small deviations (the tail) are cheaply
computed with a collocation and perturbation method.

We derive rigorous convergence analysis of the statistics of the QoI
in terms of the number of collocation points and the perturbation
approximation of the tail. \corb{Analytic estimates show that the
  error of the QoI for the hybrid collocation-perturbation method (or
  the hybrid perturbation method for short) decays quadratically with
  respect to the of sum of the series coefficients of the series
  expansion of the tail. This is in contrast to the linear decay of
  the error estimates derived in \cite{Castrillon2013} for the pure
  stochastic collocation approach.} Furthermore, numerical experiments
show faster convergence than the stochastic collocation approach.

The outline of the paper is the following: In Section \ref{background}
mathematical background material is introduced.  In Section
\ref{setup} we set up the problem and reformulate the random domain
elliptic PDE problem onto a deterministic domain with random matrix
coefficients. We assume that the random boundary is parameterized by
$N$ random variables. In section \ref{collocation-perturbation} we
derive the hybrid collocation-perturbation approach. \corb{The
  approach reduces to computing {\it mean} and {\it variance
    correction} terms that quantifies the contribution from the tail
  of the random domain expansion.  In Section \ref{analyticity} we
  show that the mean and variance correction terms can be analytically
  extended onto a well defined region in $\C^{N_s}$.}  In Section
\ref{erroranalysis} we derive error estimates for the mean and
variance of the QoI with respect to the finite element, sparse grid
and perturbation approximations. In section \ref{complexity} a
complexity and tolerance analysis is derived.  Finally, in section
\ref{numericalresults} numerical examples are presented.

\section{Background}
\label{background}

In this section we introduce the general notation and mathematical
background that will be used in this paper. Let $\Omega$ be the set of
outcomes from the complete probability space $(\Omega, \mcF,
\mathbb{P})$, where $\mcF$ is a sigma algebra of events and
$\mathbb{P}$ is a probability measure.  \corb{Define
  $L^{q}_{\Pol}(\Omega)$, $q \in [1, \infty]$, as the following Banach
  spaces:}
\[
\corb{L^{q}_{\Pol}(\Omega) := \{v \,\,|\,\, \int_{\Omega}
|v(\omega)|^{q}\,d\Pol(\omega) < \infty \}\,\mbox{and}\,
L^{\infty}_{\Pol}(\Omega) := \{v \,\,|\,\, \esssup_{ \omega \in \Omega}
|v(\omega)| < \infty \},}
\]
\noindent where $v:\Omega \rightarrow \mathbb{R}$ is a measurable
random variable.

Let $\bY:=[Y_1, \dots, Y_{N}]$ be a $N$ valued random vector
measurable in $(\Omega, \mcF, \mathbb{P})$ and without loss of
generality denote $\Gamma_n:=[-1,1]$ as the image of $Y_n$ for
$n=1,\dots,N$. Assume that $\bY$ takes values on $\Gamma:=\Gamma_{1}
\times \dots \times \Gamma_{N} \subset \mathbb{R}^{N}$ and let
$\mcB(\Gamma)$ be the Borel $\sigma-$ algebra.  Define the induced
measure $\mu_{\bY}$ on $(\Gamma,\mcB(\Gamma))$ as $\mu_{\bY} : =
\mathbb{P}(\bY^{-1}(A))$ for all $A \in \mcB(\Gamma)$. Assuming that
the induced measure is absolutely continuous with respect to the
Lebesgue measure defined on $\Gamma$, then there exists a density
function $\rho({\bf y}): \Gamma \rightarrow [0, +\infty)$ such that
  for any event $A \in \mcB(\Gamma)$
\[
\mathbb{P}(\bY \in A) := \mathbb{P}(\bY^{-1}(A)) = \int_{A} \rho( {\bf y}
)\,d {\bf y}.
\]
\noindent Now, for any measurable function $\bY \in L^{1}_{P}(\Gamma)$
\corb{define the expected value as}
\[
\mathbb{E}[\bY] = \int_{\Gamma} {\bf y} \rho( {\bf y} )\, d
{\bf y}.
\]
Define also the following Banach spaces:
\[
\begin{split}
L^{q}_{\rho}(\Gamma) 
&:= \left\{ v \,\,|\,\, \int_{\Omega}
|v(\by)|^{q}\, \rho(\by) d\by < \infty \right\} \,\,\mbox{and}\,\,\, \\
L^{\infty}_{\rho}(\Gamma) 
&:= \left\{v \,\,|\,\, \esssup_{ \by \in \Gamma}
|v(\by)| < \infty \right\}.
\end{split}
\]

\corb{We discuss in the next section an approach of approximating a
  given function $\tilde f \in L^{q}_{\rho}(\Gamma)$, which is
  sufficiently smooth, by multivariate polynomials and sparse grid
  interpolation.}

\subsection{Sparse Grids}

Our goal is to find a compact an accurate approximation of a
multivariate function $\tilde f:\Gamma \rightarrow \R$ with sufficient
regularity. \corb{It is assumed that $\tilde f \in C^{0}(\Gamma; V)$ where
\[
C^{0}(\Gamma;V) := \{\mbox{$v:\Gamma \rightarrow V$ is continuous on
  $\Gamma$ and $\max_{\by \in \Gamma} \|v(\by)\|_{V} < \infty$ } \}
\]
and $V$ is a Banach space.}  Consider the univariate Lagrange
interpolant along the $n^{th}$ dimension of $\Gamma$
\[
\mcI_n^{m(i)}:C^0(\Gamma_{n})\rightarrow \mcP_{m(i)-1}(\Gamma_{n}),
\]
where $i\geq 1$ denotes the level of approximation and $m(i)$ the
number of collocation knots used to build the interpolation at level
$i$ such that $m(0)=0$, $m(1)=1$ and $m(i) < m(i+1)$ for $i\geq
1$. Furthermore let $\mcI_n^{m(0)}=0$. The space
$\mcP_{m(i)-1}(\Gamma_{n})$ is the set of polynomials of degree at
most $m(i) - 1$.

We can construct an interpolant by taking tensor products of
$\mcI_n^{m(i)}$ along each dimension for $n = 1,\dots,N$. However, the
number of collocation knots explodes exponentially with respect to the
number of dimensions, thus limiting feasibility to small dimensions.
Alternately, consider the difference operator along the $n^{th}$
dimension
\[
  \Delta_n^{m(i)} := \mcI_n^{m(i)}-\mcI_n^{m(i-1)}.
\]
\corb{The sparse grid approximation of $\tilde f \in C^{0}(\Gamma)$ is
  defined as}
\begin{equation}
  \mcS^{m,g}_w[\tilde f] 
= \sum_{\ii\in\Nset^{N}_+: g(\ii)\leq w} \;\;
 \bigotimes_{n=1}^{N} \Delta_n^{m(i_n)}(\tilde f) 
\end{equation}
where $w \geq 0$, $w \in \N_+$ $(\N_+ := \N \cup \{0\})$, is the
approximation level, $\ii=(i_1,\ldots,i_{N})$ $\in \Nset^{N}_+$, and
$g:\Nset^{N}_+\rightarrow\Nset$ is strictly increasing in each
argument. The sparse grid can also we re-written as
\begin{equation}
  \mcS^{m,g}_w[\tilde f]
  = \sum_{\ii\in\Nset^{N}_+: g(\ii)\leq w} \;c(\ii)\; 
  \bigotimes_{n=1}^{N} \mcI_n^{m(i_n)}(\tilde f), \qquad \text{with } c(\ii) 
  = \sum_{\stackrel{\jj \in \{0,1\}^{N}:}{g(\ii+\jj)\leq w}} (-1)^{|\jj|}.
\label{sparsegrid:eqn1}
\end{equation}
From the previous expression, we see that the sparse grid
approximation is obtained as a linear combination of full tensor
product interpolations.  \corb{However, the constraint $g(\ii)\leq w$
  in \eqref{sparsegrid:eqn1} restricts the growth of tensor grids of
  high degree.}

\corb{Let $\mm(\ii) = (m(i_1),\ldots,m(i_{N}))$ and consider the ordered
polynomial polynomial set}
\[
\Lambda^{m,g}(w) = \{\pp\in\Nset^{N}, \;\;  g(\mm^{-1}(\pp+\oone))\leq w\}. 
\]
\corb{Let $\Pol_{\Lambda^{m,g}(w)}(\Gamma)$ be the associated multivariate
polynomial space}
\[
\Pol_{\Lambda^{m,g}(w)}(\Gamma) = span\left\{\prod_{n=1}^{N} y_n^{p_n},
  \;\; \text{with } \pp\in\Lambda^{m,g}(w)\right\}.
\]
\corb{It can shown that $\mcS^{m,g}_w[\tilde f] \in
  \Pol_{\Lambda^{m,g}(w)}(\Gamma)$ (see e.g. \cite{Back2011}). Now,
  one of the most typical choices for $m$ and $g$ is given by the
  Smolyak (SM) formulas (see
  \cite{Smolyak63,Novak_Ritter_00,Back2011})}
\[
m(i) = \begin{cases} 1, & \text{for } i=1 \\ 2^{i-1}+1, & \text{for }
  i>1 \end{cases}\quad \text{ and } \quad g(\ii) = \sum_{n=1}^N
(i_n-1).
\]
This choice of $m$, combined with the choice of Clenshaw-Curtis (CC)
interpolation points (extrema of Chebyshev polynomials) leads to
nested sequences of one dimensional interpolation formulas and a
sparse grid with a highly reduced number of points compared to the
corresponding tensor grid (see \cite{Back2011}). Other choices are
given by Total Degree (TD) and Hyperbolic Cross (HC).

\corb{It can also be shown that the TD, SM and HC anisotropic sparse
  approximation formulas can be readily constructed with improved
  convergence rates (see \cite{nobile2008b}). Moreover, in
  \cite{Chkifa2014}, the authors show convergence of anisotropic
  sparse grid approximations with infinite dimensions ($N \rightarrow
  \infty$).}

\corb{In \cite{Nobile2016} the authors show the construction of
  quasi-optimal grids have been shown to have exponential
  convergence.}

\section{Problem setup and formulation}
\label{setup}

\corb{Let $D(\omega) \subset \mathbb{R}^{d}$ be an open bounded domain
  with Lipschitz boundary $\partial D(\omega)$ that is shape dependent
  on the stochastic parameter $\omega \in \Omega$ and and a Lipschitz
  bounded open reference domain $U \subset \R^d$.} Let the map
$F(\omega):U \rightarrow D(\omega)$ be a one-to-one for all $\omega
\in \Omega$ and whose image coincides with
$D(\omega)$. \corb{Furthermore denote $\partial F(\omega)$ as the
  Jacobian of $F(\omega)$ and suppose that $F$ satisfies the following
  assumption.}

\begin{assumption} Given a one-to-one map $F(\omega):U \rightarrow
  D(\omega)$ there exist constants $\F_{min}$ and $\F_{max}$ such that
\[
0<\F_{min} \leq \sigma_{min} (\partial F(\omega)) 
\,\,\mbox{and}\,\, \sigma_{max} (\partial F(\omega)) \leq
\F_{max} < \infty
\]
\label{setup:Assumption1}
\noindent almost everywhere in $U$ and almost surely in $\Omega$. We
have denoted by $\sigma_{min} (\partial F(\omega))$ (and $\sigma_{max}
(\partial F(\omega))$) the minimum (respectively maximum) singular
value of the Jacobian $\partial F(\omega)$. \corb{In Figure
  \ref{setup:fig1} a cartoon example of the deformation of the
  reference domain $U$ is shown.}
\end{assumption}

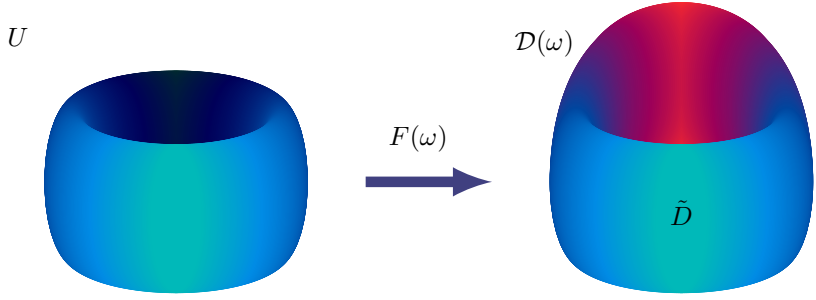
\begin{figure}
\begin{center}
\begin{tikzpicture}[scale=1.4] 
    \foreach \x in {90,...,-90} { 
    \pgfmathsetmacro\elrad{10*max(sin(\x),.7)}
    \pgfmathsetmacro\ltint{.9*abs(\x-55)/180}
    \pgfmathsetmacro\rtint{.9*(1-abs(\x+55)/180)}
    \definecolor{currentcolor}{rgb}{0, \ltint, \rtint}
    \draw[color=currentcolor,fill=currentcolor] 
        (xyz polar cs:angle=\x,y radius=0.35,x radius=1.0) 
        ellipse (\elrad pt and 20pt);
    \definecolor{currentcolor}{rgb}{0, \ltint, \rtint}
    \draw[color=currentcolor,fill=currentcolor] 
        (xyz polar cs:angle=180-\x,radius=0.35,x radius=1.0) 
        ellipse (\elrad pt and 20pt);
    } 
\coordinate (O) at (1.8,0);
\coordinate (P) at (3.0,0);
\draw[->, >=latex, green!50!red!50!blue, line width=4 pt] (O) -- (P);
\node[above=16pt,scale=1] at (2.3,-0.2) {$F(\omega)$};
\node[above=16pt,scale=1] at (-1.5,0.8) {$U$};
\end{tikzpicture}
\begin{tikzpicture}[scale=1.4] 
    \foreach \x in {90,...,0} { 
    \pgfmathsetmacro\elrad{10*max(sin(\x),.7)}
    \pgfmathsetmacro\ltint{.9*abs(\x-55)/180}
    \pgfmathsetmacro\rtint{.9*(1-abs(\x+55)/180)}
    \pgfmathsetmacro\ztint{2*(abs(\x)/180)}
    \definecolor{currentcolor}{rgb}{\ztint,\ltint,\rtint}
    \draw[color=currentcolor,fill=currentcolor] 
        (xyz polar cs:angle=\x,y radius=1,x radius=1.0) 
        ellipse (\elrad pt and 20pt);
    \definecolor{currentcolor}{rgb}{\ztint,\ltint,\rtint}
    \draw[color=currentcolor,fill=currentcolor] 
        (xyz polar cs:angle=180-\x,radius=1,x radius=1.0) 
        ellipse (\elrad pt and 20pt);
    }

    \foreach \x in {0,...,-90} { 
    \pgfmathsetmacro\elrad{10*max(sin(\x),.7)}
    \pgfmathsetmacro\ltint{.9*abs(\x-55)/180}
    \pgfmathsetmacro\rtint{.9*(1-abs(\x+55)/180)}
    \definecolor{currentcolor}{rgb}{0, \ltint, \rtint}
    \draw[color=currentcolor,fill=currentcolor] 
        (xyz polar cs:angle=\x,y radius=.35,x radius=1.0) 
        ellipse (\elrad pt and 20pt);
    \definecolor{currentcolor}{rgb}{0, \ltint, \rtint}
    \draw[color=currentcolor,fill=currentcolor] 
        (xyz polar cs:angle=180-\x,radius=.35,x radius=1.0) 
        ellipse (\elrad pt and 20pt);
    }
    \pgfmathsetmacro\elrad{cos(-135)}
    \pgfmathsetmacro\xrad{1.5cm-20pt*\elrad}
    \pgfmathsetmacro\yrad{.75cm-20pt*sin(-135)}

    \node[above=16pt,scale=1] at (0,-0.9) {$\tilde{D}$};
    \node[above=16pt,scale=1] at (-1.3,0.7) {$\mcD(\omega)$};
\end{tikzpicture}
\end{center}
\caption{\corb{Cartoon example of stochastic domain realization from a
  reference domain. The front of the torus, shown by the area $\tilde
  D$ is not stochastic and thus not deformed. The back of the torus is
  deformed from the reference domain $U$.  This figure is modified
  from the TikZ tex code from {\it Smooth map of manifolds and smooth
    spaces} by Andrew Stacey \cite{Stacey}.}}
\label{setup:fig1}
\end{figure}


\begin{lemma}
\corb{Under Assumptions \ref{setup:Assumption1} it is immediate to
  prove the following results:}
\begin{enumerate}[i)]
\item $L^{2}(D(\omega))$ and $L^{2}(U)$ are isomorphic almost surely.
\item $H^{1}(D(\omega))$ and $H^{1}(U)$ are isomorphic almost surely.
\end{enumerate}
\label{setup:lemma0}
\end{lemma}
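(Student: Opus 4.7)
The plan is to construct an explicit isomorphism in both cases via the pullback $\Phi_\omega : v \mapsto v \circ F(\omega)$ and rely on the change of variables formula together with the uniform singular value bounds in Assumption~\ref{setup:Assumption1}.

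For part (i), I would first observe that since $F(\omega)$ is one-to-one and the singular values of $\partial F(\omega)$ are bounded above and below a.e.\ in $U$, the Jacobian determinant satisfies $\F_{min}^{d}\leq |\det \partial F(\omega)| \leq \F_{max}^{d}$ a.e. The standard change of variables formula then yields, for any $v\in L^{2}(D(\omega))$,
\[
\int_{D(\omega)} |v(x)|^{2}\,dx \;=\; \int_{U} |v\circ F(\omega)(y)|^{2}\,|\det \partial F(\omega)(y)|\,dy,
\]
so that $\F_{min}^{d/2}\|v\circ F\|_{L^{2}(U)} \leq \|v\|_{L^{2}(D(\omega))}\leq \F_{max}^{d/2}\|v\circ F\|_{L^{2}(U)}$. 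This shows $\Phi_\omega$ is a bounded linear bijection with bounded inverse $w\mapsto w\circ F^{-1}(\omega)$, hence an isomorphism almost surely.

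For part (ii), I would upgrade the argument by tracking gradients through the chain rule: for $v\in H^{1}(D(\omega))$,
\[
\nabla_{y}\bigl(v\circ F(\omega)\bigr)(y) \;=\; \partial F(\omega)(y)^{T}\,(\nabla_{x} v)(F(\omega)(y)).
\]
Using $\sigma_{min}(\partial F)\leq \|\partial F^{T}\xi\|/\|\xi\|\leq \sigma_{max}(\partial F)$ for $\xi\neq 0$, together with the change of variables bounds on $|\det \partial F|$, one obtains two-sided estimates of the form
\[
C_{1}\,\|v\circ F\|_{H^{1}(U)} \;\leq\; \|v\|_{H^{1}(D(\omega))} \;\leq\; C_{2}\,\|v\circ F\|_{H^{1}(U)},
\]
with constants $C_{1},C_{2}$ depending only on $\F_{min},\F_{max}$ and $d$. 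An analogous computation for $F^{-1}$ (whose Jacobian singular values are controlled by $\F_{min}^{-1}$ and $\F_{max}^{-1}$) shows that the inverse pullback sends $H^{1}(U)$ into $H^{1}(D(\omega))$ continuously. Thus $\Phi_\omega$ is a Banach space isomorphism almost surely.

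The main subtlety is ensuring that the pullback actually maps weakly differentiable functions to weakly differentiable functions. The bound $\sigma_{max}(\partial F)\leq \F_{max}$ together with the Lipschitz regularity of $\partial D(\omega)$ means $F(\omega)$ is bi-Lipschitz for almost every $\omega$, which is exactly what is needed for the chain rule to hold in the weak sense and for the change of variables formula to apply to Sobolev functions. Once bi-Lipschitz continuity is established from Assumption~\ref{setup:Assumption1}, both statements follow from the estimates above; the almost sure qualifier is inherited directly from the almost sure validity of the singular value bounds.
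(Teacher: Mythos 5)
Your proof is correct and follows the standard route: the pullback $v\mapsto v\circ F(\omega)$, the change-of-variables formula with $\F_{min}^{d}\leq|\det\partial F(\omega)|\leq\F_{max}^{d}$, and the chain rule controlled by the singular-value bounds, with the bi-Lipschitz property of $F(\omega)$ justifying the weak chain rule. The paper does not write out a proof but simply defers to \cite{Castrillon2013}, where essentially this same argument is given, so your proposal matches the intended approach.
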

\begin{proof} see \cite{Castrillon2013}.
\end{proof}

Now, consider the following boundary value problem: Given
$f(\cdot,\omega), a(\cdot,\omega): D(\omega) \rightarrow \R^{d}$ and
$g(\cdot,\omega):\partial D(\omega) \rightarrow \R^{d}$ find
$u(\cdot,\omega): D(\omega) \rightarrow \R^{d}$ such that almost
surely
\begin{equation}
\corb{
\begin{split}
  -\nabla \cdot ( a(x, \omega) \nabla u(x,\omega) ) &= f(x,\omega),\,\,\,
x \in D(\omega) \\
u &= g\hspace{11mm}\mbox{on $\partial D(\omega)$.}
\end{split}
}
\label{setup:eqn1}
\end{equation}
We now make the following assumption:
\begin{assumption}There exist constants $a_{min}$ and $a_{max}$ such
  that
\[
0 < a_{min} \leq a(x, \omega) \leq a_{max} < \infty \,\,\,\mbox{for
  a.e. $x \in D(\omega)$, and a.s. $\omega \in \Omega$}.
\]
\noindent where
\[
a_{min} := \essinf_{x \in D(\omega), \omega \in \Omega } a(x, \omega)
\,\,\,\,\mbox{and}\,\,\,\, a_{max} := \esssup_{x \in D(\omega),
  \omega \in \Omega} a(x, \omega).
\]
\label{setup:Assumption2}
\end{assumption}

Since $U$ is bounded and Lipschitz there exists a bounded linear
operator $T:H^{1/2}(\partial U) \rightarrow H^{1}(U)$ such that for
all $\tilde g \in H^{1/2}(\partial U)$ we have that $\tilde {\bf w} :=
T \tilde g \in H^{1}(U)$ satisfies $\hat \bw|_{U} = \tilde g$ almost
surely. \corb{By applying a change of variables the weak form of
  \eqref{setup:eqn1} can be reformulated on the reference domain $U$
  (see \cite{Castrillon2013} for details) as:}
\begin{problem}
Given that $(f \circ F)(\cdot,\omega) \in L^{2}(U)$ find
$\hat{u}(\cdot, \omega) \in H^{1}_{0}(U)$ s.t.
\begin{equation}
  B(\omega ;\hat{u},v) = 
\tilde{l}(\omega; v) ,\,\,\,\forall v \in H^{1}_{0}(U)
\label{setup:problem1}
\end{equation}
\corb{almost surely, where $\tilde{l}(\omega; v):=\int_{U} (f \circ
  F)(\cdot,\omega) | \partial F(\cdot,\omega) |v - L(\hat
  \bw(\cdot,\omega),v)$, $\hat g := g \circ F$, $\hat \bw := T(\hat
  g)$, for any $w,s \in H^1_{0}(U)$}
\[
\begin{split}
\corb{
B(\omega; s,w)}
&:= 
\corb{
\int_{U} (a \circ F(\cdot,\omega) \nabla s
^{T} C^{-1}(\cdot,\omega) \nabla w | \partial
F(\cdot,\omega)|,}
\\
L( \hat \bw(\cdot,\omega) ,v)
&:=
   \int_U (a \circ F)(\cdot,\omega) (\nabla (\hat \bw
(\cdot,\omega))^{T} C^{-1}(\cdot,\omega) | \partial F(\cdot,
  \omega) | \nabla v, 
\end{split}
\]
$C(\cdot,\omega) := \partial F(\omega) ^T \partial F(\omega)$, and
$\hat \bw(\cdot,\omega) |_{\partial U} = \hat g(\cdot,\omega)$.  This
homogeneous boundary value problem can be remapped to $D(\omega)$ as
$\tilde{u}(\cdot,\omega) := (\hat{u} \circ F^{-1})(\cdot, \omega)$,
thus we can rewrite $\hat{u}(\cdot,\omega) = (\tilde{u} \circ
F)(\cdot,\omega)$.
\label{setup:Prob3}
\end{problem}
The solution $u(\cdot, \omega) \in H^{1}(D(\omega))$ for the Dirichlet
boundary value problem is obtained as $u(\cdot,\omega) = \tilde{u}
(\cdot,\omega) + (\hat \bw \circ F^{-1})(\cdot,\omega)$.

\subsection{Quantity of interest and the adjoint problem}
\label{setup:QoI}

\corb{For many practical problems the QoI is not necessarily the
  solution of the elliptic PDE, but instead a bounded linear
  functional $Q:H^{1}_{0}(U) \rightarrow \R$ of the solution. This
  could be for example the average of the solution on a specific
  region of the domain.  Let us consider the QoI of the form}
\begin{equation}
\corb{Q(u) := \int_{\tilde{D}} q(x) u(x,\omega)\,\,dx}
\label{setup:qoi}
\end{equation}
with $q \in L^{2}(\tilde D)$ over the region $\tilde{D} \subset
D(\omega)$ for any $\omega \in \Omega$.  \corb{It is assumed that
  there $\exists \delta > 0$ such that $dist(\tilde{D}, \partial
  D(\omega)) > \delta$ for all $\omega \in \Omega$ and $F
  \left. \right|_{\tilde{D}} = I$ on $\tilde{D}$. In layman's terms
  the region $\tilde D$ has no deformations and it is contained inside
  $D$. This, for example, could be a small patch inside $D$ that is
  known not to be deformed.}

\corb{
\begin{remark}
  The restriction $F \left. \right|_{\tilde{D}} = I$ on $\tilde D$ is
  not hard. This is done to simplify the numerical simulations in
  Section \ref{numericalresults}.  The perturbation approach in
  Section \ref{collocation-perturbation} and the analyticity analysis
  in Section \ref{analyticity} are still valid even if this
  restriction is relaxed.
  \end{remark}
}

\corb{In the next section, the perturbation approximation is derived
  for $Q(u)$ and not directly from the solution $u$. It is thus
  necessary to introduce the influence function $\varphi:H^{1}_{0}(U)
  \rightarrow \R$, that can be easily computed by the following
  adjoint problem:
\begin{problem}
Find $\varphi \in H^{1}_{0}(U)$ such that for all $v \in H^{1}_{0}(U)$
\begin{equation}
B(\omega; v, \varphi) = Q(v)
\label{setup:dual2}
\end{equation}
a.s. in $\Omega$. After computing the influence function $\varphi$,
the QoI can be computed as $Q(u) = B(u, \varphi)$.
\label{setup:Prob4}
\end{problem}
  \begin{remark}
    We can pick a particular operator $T$ such that $\hat \bw = T(\hat
    g)$ and vanishes in the region defined by $\tilde D$. Thus we have
    that $Q(\hat \bw) = 0$ and $Q(u) = Q(\tilde u + \hat \bw) =
    Q(\tilde u)$.
    \end{remark}
}

\subsection{Domain parameterization and semi-discrete approximation}
\label{setup:domainparametrization}

To simplify the analysis of the elliptic PDE with a random domain from
equation \eqref{setup:eqn1} we remapped the solution onto a fix
deterministic reference domain. This approach has also been applied in
\cite{Fransos2008,Castrillon2013}. We now restrict our attention to
a particular class of domain deformation.

\begin{assumption} 
The map $F(\omega):U \rightarrow D(\omega)$ has the form
\[
F(x, \omega) := x + e(x,\omega)\hat{v}(x)
\]
a.s. in $\Omega$, with $\hat{v}:U \rightarrow \R^{d}$, $\hat v :=
[\hat v_1, \dots, \hat v_d]^T$, $\hat{v}_{i} \in C^{1}(U)$ for $i = 1,
\dots, d$, and $e(\cdot,\omega): U \rightarrow
D(\omega)$. Assume that the map $F(\omega): U \rightarrow
D(\omega)$ is one-to-one almost surely.
\label{setup:Assumption3}
\end{assumption}

\corb{The magnitude of the stochastic domain perturbation is assumed
  to be parameterized as}
\[
\corb{e(x, \omega) := \sum_{n=1}^{N} \sqrt{\mu_{n}}
b_{n}(x)Y_{n}(\omega).}
\]
\corb{Recall that for $n = 1, \dots, N$ let $\Gamma_{n} \equiv
  Y_n(\Omega)$, $\Gamma_{n} \equiv [-1,1]$ and $\Gamma:=
  \prod_{n=1}^{N_s} \Gamma_{N_s}$. Denote $\rho(\bys):\Gamma_s
  \rightarrow \mathbb{R}_{+}$ as the joint probability density of
  $\bY$. Now, the stochastic domain perturbation is split as}
\[
\corb{e(x,\omega) \rightarrow e_{s}(x,\omega) + e_{f}(x,\omega),}
\]
\corb{where we denote $e_{s}(x,\omega)$ as the large deviations and
  $e_{f}(x,\omega)$ as the small deviations modes with the following
  parameterization:}
\[
\corb{e_{s}(x, \omega) := \sum_{n=1}^{N_s} \sqrt{\mu_{s,n}}
b_{s,n}(x)Y_{n}(\omega) \,\,\,\mbox{and} \,\,\, e_{f}(x, \omega) :=
\sum_{n=1}^{N_{f}} \sqrt{\mu_{f,n}} b_{f,n}(x) Y_{n+N_s}(\omega),}
\]
\corb{where $N_s + N_f = N$. Furthermore, for $n = 1,\dots, N_s$ let
  $\mu_{s,n} := \mu_{n}$, $b_{s,n}(x) :=
  b_{n}(x)$, and for $n = 1, \dots, N_f$ let $\mu_{f,n} :=
  \mu_{n+N_s}$ and $b_{f,n}(x) := b_{n+N_s}(x)$.}

\corb{Denote $\bY_s : = [Y_{1}, \dots, Y_{N_s}]$, $\Gamma_s:=
  \prod_{n=1}^{N_s} \Gamma_{n}$, and $\rho(\bys):\Gamma_s \rightarrow
  \mathbb{R}_{+}$ as the joint probability density of $\bY_s$. Similarly
  denote $\bY_f : = [Y_{N_s+1}, \dots, Y_{N}]$, $\Gamma_f:=
  \prod_{n=N_s+1}^{N} \Gamma_{n}$, and $\rho(\byf):\Gamma_f
  \rightarrow \mathbb{R}_{+}$ as the joint probability density of
  $\bY_f$.}
\begin{assumption}~
\corb{
\begin{enumerate}
\item $b_{1},\dots, b_{N} \in W^{2,\infty}(U)$
\item $\| b_{n} \|_{L^{\infty}(U)} = 1$ for $n = 1,2,\dots, N$
\item  $\mu_{n}$ are monotonically decreasing for $n = 1,2,\dots N$.
\item $\mathbb{E}[Y_{n} Y_{m}]= \delta[n-m]$,where $m,n
  = 1, \dots N$.
\end{enumerate}
}
\label{setup:Assumption4}
\end{assumption}

\corb{Now, from the stochastic model the Jacobian $\partial F$ is written as
\begin{equation}
\partial F(x,\omega) = I + 
\sum_{l=1}^{N} B_{l}(x) \sqrt{\mu_{l}}  Y_{l}(\omega) 
\label{analyticity:eqn1}
\end{equation}
where i) for $l = 1,\dots,N_s$, $\sqrt{\mu_l} := \sqrt{\mu_{s,l}}$,
$B_l := B_{s,l}$ and
\[
B_{s,l}(x) := b_{s,l}(x) \partial \hat{v}(x) + 
\hat v(x) \nabla b_{s,l}(x)^{T}
\]
where $\partial v$ is the Jacobian of $v(x)$; ii) for $l =
1,\dots,N_f$ $\sqrt{\mu_{l+N_s}} := \sqrt{\mu_{f,l}}$, $B_{l+N_s} :=
B_{f,l}$ and similar definition for $B_{f,l}$.
\begin{assumption}~
\begin{enumerate}
\item $a \circ F$ and $\hat g$ are only a function of $x
  \in U$ and independent of $\omega \in \Omega$.
\item There exists $0 < \tilde{\delta} < 1$ such that $\sum_{l=1}^{N}
  \| B_l(x) \|_{2} \sqrt{\mu_{l}} \leq 1 -
  \tilde{\delta}$, for all $x \in U$.
\item Assume that $f: \R^d \rightarrow \R$ can be analytically
  extended in $\C^{d}$.
\end{enumerate}
\label{analyticity:assumption1}
\end{assumption}
}

Let ${ H_h(U)}\subset H_0^1(U)$ be the standard finite element space
of dimension $N_h$, which contains continuous piecewise polynomials
defined on regular triangulations $\mathcal{T}_h$ that have a maximum
mesh spacing parameter $h>0$. Let $\hat u_h : \Gamma_s \rightarrow
H_h(U)$ be the semi-discrete approximation that is obtained by
projecting the solution of \eqref{setup:problem1} onto the subspace
$H_h(U)$, for each $\by_s \in \Gamma_s$, i.e.,
\begin{equation}
\begin{split}
\int_U [\Grad \hat u_h(\cdot,\by_s)]^{T} G(\by_s) \Grad v_h\,dx &= \int_U
(f \circ F)(\cdot,\by_s) v_h |\partial F|(\by_s)| \,dx \\ &- L(\hat
\bw,v_h)
\end{split}
\label{collocation-perturbation:eq1}
\end{equation}
for all $v_h\in H_h(U)$ and for a.s.  $\by_s \in \Gamma$. \corb{Note
  that $G(\bys) := (a \circ F(\bys))det(\partial F(\bys))$ $\partial
  F(\bys)^{-1} \partial F(\bys)^{-T}$ and $Q_h(\bys):= Q(\tilde u_h
  \circ F) = Q( \hat{u}_h(\bys))$.}

\section{\corb{Perturbation} }
\label{collocation-perturbation}

\corb{In this section we present a perturbation approach to
  approximate $Q(\by)$ with respect to the domain perturbation. In
  Section \ref{collocation-perturbation:varianceapprox}, the
  perturbation approach is applied with respect to the tail field
  $e_{f}(\cdot,\omega)$. A stochastic collocation approach is then
  used to approximate the contribution with respect to
  $e_{s}(\cdot,\omega)$.}

\corb{Whenever the perturbation of $Q(\by):=Q( (\tilde{u} \circ
  F)(\cdot, \by))$ is small with respect to the parameters ${\bf y}
  \in W$, for a suitable linear vector space $W$ of perturbations, a
  linear approximation is sufficient for an accurate estimate.} To
this end we introduce the following definition.
\begin{definition} \corb{Let $\psi$ be a regular function of the parameters
  $\by \in W$, the Gateaux derivative evaluated at $\by$
  on the space of perturbations $W$ is defined as}
  \[
  <D_{\bf y} \psi({\bf y}), \delta {\bf y} > = \lim_{s \rightarrow 0^{+}}
  \frac{\psi({\bf y} + s \delta {\bf y}) - \psi({\bf y})}{s},\forall
  \delta {\bf y} \in W.
\]
\noindent Similarly, the second order derivative $D^{2}_{{\bf y}}$ as a
bilinear form on $W$ is defined as
  \[
  D^{2}_{{\bf y}} \psi({\bf y})(\delta {\bf y}_{1}, \delta {\bf y}_{2}) =
  \lim_{s \rightarrow 0^{+}} < \frac{  
   D_{{\bf y}} \psi({\bf y} + s \delta
    {\bf y}_{2}) - 
D_{{\bf y}} \psi({\bf y})}{s}, \delta {\bf y}_{1} > ,\,\,\, \forall
  \delta {\bf y}_{2}, \delta {\bf y}_{1} \in W.
  \]
\end{definition}
Suppose that $Q$ is a regular function with respect to the parameters
${\bf y}$, then for all ${\bf y} = {\bf y}_{0} + \delta {\bf y} \in W$
the following expansion holds:
\begin{equation}
  Q({\bf y})  = Q({\bf y}_{0}) + 
  <D_{{\bf y}} Q({\bf y}_{0}), \delta {\bf y} > 
  + \frac{1}{2} D^{2}_{{\bf y}} Q({\bf y} + \theta \delta {\bf y}
  )(\delta {\bf y}, \delta {\bf y})
\label{perturbation:representation}
\end{equation}
\noindent for some $\theta \in (0,1)$. Thus we have a procedure to
approximate the QoI $Q({\bf y})$ with respect to the first order term
and bound the error with the second order term. To explicitly
formulate the first and second order terms we make the following
assumption:

\begin{assumption}
  \corb{For all $v,w \in H^{1}_{0}(U)$, let $\mcG({\bf y}; v, w) :=
    \nabla v^{T}G(\by)\nabla w$, where $G(\by) := (a \circ F)(\cdot,
    \by) \partial F^{-1}({\bf y})$ $\partial F^{-T}({\bf y}) |
    \partial F({\bf y}) |$, we have that for all $\by \in W$}
  \label{perturbation:assumption1}
  \corb{
\begin{enumerate}[(i)]
\item 
$ \nabla_{{\bf y}} \mcG(\by) \in
  [L^{1}(U)]^{N}$
\item For $i = 1, \dots, N$ there exists $\mcC_\mcG(\by) > 0$ s.t.
\[
\int_{U} \partial y_{i} \mcG(\by; v,w)\leq \mcC_\mcG(\by)
\|v\|_{H^{1}_{0}(U)} \|w\|_{H^{1}_{0}(U)}.
\]
\item $\mcC_\mcG(\by)$ is uniformly bounded on $W$.
\end{enumerate}
Furthermore, for all
$\by \in \mcA_{\by}$ we have that
$\nabla_{{\bf y}} (f \circ F)(\by), \nabla_{{\bf y}} \hat \bw(\by) \in 
[L^{1}(U)]^{N}$}
\end{assumption}

\begin{remark} \corb{Although we have that (i) - (iii) are assumptions
    for now, under Assumptions \ref{setup:Assumption1} -
    \ref{setup:Assumption4} and Lemma \ref{errorbounds:lemma3} in
    Section \ref{erroranalysis} it can be shown that Assumption
    \ref{perturbation:assumption1}(i) - (iii) are true for all $\by
    \in \Gamma$.}
  \end{remark}

\begin{definition}
\corb{For all $v,w \in H^{1}_{0}(U)$, and $\by \in W$ let}
\[
<D_{\bf y} B({\bf y}; v,w), \delta {\bf y} > := 
\lim_{s \rightarrow 0^{+}} \frac{1}{s}
[
B({\bf y} + s \delta {\bf y}; v, w) 
-
B({\bf y}; v, w) 
]\,\,\, \forall \delta {\bf y} \in W.
\]
\end{definition}

\begin{remark}
Under Assumption \ref{perturbation:assumption1} for $v,w \in H^1_0(U)$
we have that for all $\by \in W$
\[
<D_{\bf y} B({\bf y}; v,w), \delta {\bf y} >= \int_{U} \nabla_{{\bf
    y}} \mcG({\bf y}; v,w)\cdot \delta {\bf y} = \sum_{n = 1}^{N}
\int_U (\nabla v^T \partial y_n G(\by) \nabla w) \delta y_n
\]
Furthermore, under Assumption \ref{perturbation:assumption1} we have
that
\[
<D_{\bf y} \tf, \delta {\bf y} >= \int_{U} \nabla_{\bf y} \tf \cdot
\delta {\bf y}.
\]
\end{remark}

We can introduce as well the derivative for any function $(v \circ
F)(\cdot,\by) \in L^{2}(U)$ with respect to $\by$: \corb{For all $\by
  \in W$ we have that}
\[
D_{\by} (v \circ F)(\cdot,\by)(\delta \by) := \lim_{s
  \rightarrow o^+} \frac{1}{s}[(v \circ F)(\cdot,\by + s\delta
  \by) - (v \circ F)(\cdot,\by)].
\]
Finally, we assume that Assumptions \ref{setup:Assumption1} \&
\ref{setup:Assumption2} and Problems \ref{setup:Prob3} \&
\ref{setup:Prob4} are valid for the $\R^{N}$ valued vector $\by \in
W$.  This is only to show that the perturbation approach is valid for
the general set of perturbations in $W$. We then use this result in
Section \ref{collocation-perturbation:varianceapprox} for the
allowable perturbations $\by \in \Gamma$.

\begin{lemma} \corb{Suppose that 
    Assumptions \ref{setup:Assumption1}, \ref{setup:Assumption2} and
    \ref{perturbation:assumption1} are satisfied then for any $\by,
    \byd \in W$ and for all $v \in H^{1}_{0}(U)$ we have that}
\[
\begin{split}
& B(\by; D_{\by} (\tilde{u} \circ F)(\cdot,\by)
(\byd), v) 
 = 
\sum_{n = 1}^{N} \delta y_{n} \bigg( \int_{U}  
-(\nabla
\tu)^T
\partial_{y_{n}} G(\by) 
  \nabla v \\
&+
\partial_{y_n} \tf |\partial F(\by)| v 
+ 
\tf \partial_{y_n} |\partial F(\by)|  v \\
&- 
(\nabla \tw)^T  \partial_{y_n}G(\by)  \nabla v
- (\partial_{y_n}
\nabla \tw)^T G(\by) \nabla v \bigg).
\end{split}
\]
\label{perturbation:lemma1}
\end{lemma}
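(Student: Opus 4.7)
The plan is to start from the weak formulation in Problem \ref{setup:Prob3} evaluated at the perturbed parameter $\by + s\byd$ and at the reference $\by$. Namely, for every $v \in H^1_0(U)$,
\[
B(\by + s\byd; \tus, v) = \tilde{l}(\by + s\byd; v), \qquad B(\by; \tu, v) = \tilde{l}(\by; v).
\]
Subtracting and dividing by $s > 0$ yields an identity
\[
\tfrac{1}{s}\bigl[B(\by+s\byd;\tus,v) - B(\by;\tu,v)\bigr] = \tfrac{1}{s}\bigl[\tilde l(\by+s\byd;v) - \tilde l(\by;v)\bigr],
\]
on which I would pass $s \to 0^+$ on both sides and then isolate the term $B(\by; D_\by (\tilde u\circ F)(\cdot,\by)(\byd), v)$.

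For the left-hand side, I would use the bilinearity of $B$ in its second slot to split
\[
B(\by+s\byd;\tus,v) - B(\by;\tu,v) = B\!\left(\by+s\byd; \tus - \tu, v\right) + \bigl[B(\by+s\byd;\tu,v) - B(\by;\tu,v)\bigr],
\]
divide by $s$, and take the limit. The first quotient converges to $B(\by; D_{\by}(\tilde u\circ F)(\cdot,\by)(\byd), v)$, using continuity in $\by$ of the matrix field $G(\by)$ (which follows from Assumptions \ref{setup:Assumption1}--\ref{setup:Assumption4}) together with the existence of the Gateaux derivative of the transported solution. The second quotient tends, by the remark following Assumption \ref{perturbation:assumption1}, to $\sum_{n=1}^N \delta y_n \int_U (\nabla \tu)^T \partial_{y_n} G(\by)\, \nabla v$.

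For the right-hand side, I would recall that $\tilde l(\by;v) = \int_U \tf\,|\partial F(\by)|\, v - L(\tw,v)$ with $L(\tw,v) = \int_U (\nabla \tw)^T G(\by) \nabla v$, and differentiate termwise in $\by$. A dominated-convergence argument, justified by the $L^1(U)$ regularity of $\nabla_{\by} \tf$ and $\nabla_{\by} \tw$ together with Assumption \ref{perturbation:assumption1}(i)--(iii), allows me to interchange $\lim_{s\to 0^+}$ with the spatial integrals and yields
\[
\sum_{n=1}^{N}\delta y_n \int_U \!\bigl[\partial_{y_n}\tf\,|\partial F(\by)|\,v + \tf\,\partial_{y_n}|\partial F(\by)|\,v - (\partial_{y_n}\nabla\tw)^T G(\by)\nabla v - (\nabla\tw)^T \partial_{y_n} G(\by)\nabla v\bigr].
\]
Finally I would move the directional-derivative-of-$B$ term from the left-hand to the right-hand side (with a sign change) to obtain the claimed identity.

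The main technical obstacle is the uniform-in-$s$ integrability required both to pass to the limit inside the spatial integrals on the right and to identify the limit of $B(\by+s\byd;\cdot,v)$ on the left. This is precisely the role of Assumption \ref{perturbation:assumption1}: part (ii) bounds $\int_U \partial_{y_n}\mcG(\by;v,w)$ by $H^1_0$ norms, the uniform bound (iii) gives control independent of $s$ for $s$ small, and the $L^1$ integrability of $\nabla_\by \tf$ and $\nabla_\by \tw$ furnishes the dominating functions needed for the interchange of limit and integral.
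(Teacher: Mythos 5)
Your proposal is correct and follows essentially the same route as the paper: both arguments use the weak formulation to trade $B(\cdot\,;\tilde u,\cdot)$ for $\tilde l$, split the difference quotient by an add-and-subtract into a ``solution variation'' piece and a ``$G$ variation'' piece, and then differentiate $\tilde l$ termwise. The only (cosmetic) difference is which cross term you insert --- you subtract $B(\by+s\byd;\tu,v)$ so that the $G$-variation quotient is evaluated at the unperturbed solution, whereas the paper subtracts $B(\by+s\byd;\tus,v)$ so that the solution-variation quotient carries $G(\by)$ directly; the regularity needed to pass to the limit is the same in either case.
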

\begin{proof}
  \[
  \corb{
\begin{split}
& B(\by; D_{\by} \tu (\byd), v) 
= 
\lim_{s \rightarrow 0^+} \frac{1}{s} 
\int_{U} ( \nabla \tus ^T \\
&- \nabla \tu^T )G(\by) \nabla v \\
& = 
\lim_{s \rightarrow 0^+} \frac{1}{s}
\int_{U} 
\nabla \tus^T G(\by) \nabla v \\
&- \nabla \tus^T G(\by + s\byd) \nabla v \\
& + 
\lim_{s \rightarrow 0^+} \frac{1}{s}
\int_{U}
\nabla \tus^T G(\by + s\byd) \nabla v 
- \nabla \tu^T G(\by) \nabla v \\
& = - \sum_{i = 1}^{N} 
 \int_{U} (\nabla \tu)^T 
\partial_{y_{i}} G(\by)
  \delta y_{i}
  \nabla v \\
  &+
  \lim_{s \rightarrow 0^+} \frac{1}{s} \left(
  \tilde{l}(\by+s\byd;v) - \tilde{l}(\by;v)
  \right) \\
\end{split}
}
\]
then 
\[
\corb{
\begin{split}
& B(\by; D_{\by} \tu(\byd), v) 
 = \sum_{n = 1}^{N} \int_{U}  
- \nabla \tu^T \partial_{y_n} G(\by) \delta y_n
  \nabla v  \\
&+ \int_U 
\partial_{y_n} \tf \delta y_n |\partial F(\by)| v 
+ \int_U 
\tf \partial_{y_n} |\partial F(\by)| \delta y_n v \\
&- \lim_{s \rightarrow 0^+} \frac{1}{s} \int_U
(\nabla \hat \bw(\by + s\byd))^TG(\by + s\byd)\nabla v 
- (\nabla \hat \bw(\by))^{T} G(\by)\nabla v) \\
\end{split}
}
\]
\corb{The result follows.}
\end{proof}
\begin{lemma}
  \corb{Suppose that Assumptions \ref{setup:Assumption1},
    \ref{setup:Assumption2} and \ref{perturbation:assumption1} are
    satisfied then for any $\by, \byd \in W$ and for all $v \in
    H^{1}_{0}(U)$ we have that}
\[
\begin{split}
B(\by; v, D_{\by} \varphi(\by)
(\byd)   ) 
& = 
\sum_{i = 1}^{N} \int_{U} -(\nabla v)^T
\partial_{y_{i}} G(\by)
  \delta y_{i}
  \nabla \varphi ({\bf y}).  \\
\end{split}
\]
\label{perturbation:lemma2}
\end{lemma}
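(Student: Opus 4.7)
The plan is to mirror the proof of Lemma \ref{perturbation:lemma1} but to exploit the crucial fact that in the adjoint problem \eqref{setup:dual2} the right-hand side $Q(v)$ is independent of $\by$. Concretely, I would start from the two equalities
\[
B(\by + s\byd;\, v,\, \varphi(\by + s\byd)) = Q(v) = B(\by;\, v,\, \varphi(\by)),
\]
which hold for every $v \in H^{1}_{0}(U)$, subtract them, and introduce the telescoping splitting
\[
\frac{1}{s}\bigl[B(\by+s\byd;\, v,\, \varphi(\by+s\byd)) - B(\by;\, v,\, \varphi(\by+s\byd))\bigr] + \frac{1}{s}\bigl[B(\by;\, v,\, \varphi(\by+s\byd)) - B(\by;\, v,\, \varphi(\by))\bigr] = 0.
\]

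Next I would pass to the limit $s \to 0^{+}$ in each bracket separately. For the first bracket, Assumption \ref{perturbation:assumption1}, together with the continuity of $\varphi$ with respect to $\by$, yields
\[
<D_{\by} B(\by;\, v,\, \varphi(\by)),\, \byd> = \sum_{i=1}^{N} \int_{U} (\nabla v)^{T}\, \partial_{y_{i}} G(\by)\, \delta y_{i}\, \nabla \varphi(\by),
\]
exactly as in the remark following Lemma \ref{perturbation:lemma1}. For the second bracket, I would use the fact that $B(\by;\, v,\, \cdot)$ is linear and continuous on $H^{1}_{0}(U)$ (uniformly in $\by$, by Assumption \ref{setup:Assumption1}--\ref{setup:Assumption2} and the coercivity bound for $G(\by)$), so that
\[
\lim_{s \to 0^{+}} B\!\left(\by;\, v,\, \frac{\varphi(\by+s\byd) - \varphi(\by)}{s}\right) = B(\by;\, v,\, D_{\by}\varphi(\by)(\byd)).
\]
Adding the two limits gives $<D_{\by} B(\by; v, \varphi(\by)), \byd> + B(\by; v, D_{\by}\varphi(\by)(\byd)) = 0$, and rearranging produces the desired identity with its negative sign.

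The main obstacle is justifying that $\varphi(\by)$ is actually Gateaux differentiable as an $H^{1}_{0}(U)$-valued function of $\by \in W$, so that both the difference quotient in the second bracket converges strongly in $H^{1}_{0}(U)$ and the limit can be moved inside $B(\by; v, \cdot)$. This is established by the same argument used for $\tilde{u}$ in Lemma \ref{perturbation:lemma1}: one differentiates the parameter-dependent variational identity \eqref{setup:dual2} using Assumption \ref{perturbation:assumption1}(i)--(iii) and the uniform coercivity of $B(\by; \cdot, \cdot)$ (which follows from Assumptions \ref{setup:Assumption1} and \ref{setup:Assumption2}) to conclude existence of $D_{\by}\varphi$ via the implicit function theorem applied to the variational problem. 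Once differentiability is in hand, the rest of the argument is essentially a one-line differentiation of the identity $B(\by; v, \varphi(\by)) = Q(v)$ in the direction $\byd$.
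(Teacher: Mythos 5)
Your proposal is correct and follows essentially the same route as the paper, whose proof of this lemma is simply "follow the same procedure as in Lemma \ref{perturbation:lemma1}": a telescoping split of the difference quotient of the adjoint identity $B(\by;v,\varphi(\by))=Q(v)$, with the $\by$-independence of $Q(v)$ killing the right-hand-side contribution and leaving only the $-\sum_i\int_U(\nabla v)^T\partial_{y_i}G(\by)\,\delta y_i\,\nabla\varphi(\by)$ term. Your additional remarks on justifying the Gateaux differentiability of $\varphi$ go slightly beyond what the paper spells out (it relegates this to Remark \ref{perturbation:remark3}), but they are consistent with it.
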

\begin{proof}
We follow the same procedure as in Lemma \ref{perturbation:lemma1}.
\end{proof}

\begin{remark}
\corb{A consequence of Lemma \ref{perturbation:lemma1} and Lemma
  \ref{perturbation:lemma2} is that if for $n = 1, \dots,N_f$ the
  terms $\|\partial_{y_{n}} G(\by)\|_{2}$, $|\partial_{y_n}
  det(\partial F(\by))|$, $\|\partial_{y_n} (f \circ
  F)(\cdot,\by)\|_{L^{2}(U)}$ and $\|\partial_{y_n}
  \bw(\cdot,\by)\|_{H^{1}_{0}(U)}$ are uniformly bounded for all $\by
  \in W$ then $D_{\by} \tu(\byd)$ and $D_{\by} \varphi(\by)(\byd)$
  belong in $H^{1}_{0}(U)$ for any $\by \in W$ and $\byd \in W$.}
\label{perturbation:remark3}
\end{remark}

\begin{lemma} Under the same assumption as Lemma \ref{perturbation:lemma2}
we have that
\begin{equation}
\begin{split}
& \lim_{s \rightarrow 0^{+}} \frac{Q({\bf y} + s \delta {\bf y}) -
    Q({\bf y})}{s} =
\sum_{i=n}^{N} \delta y_{i} \int_{U} \bigg( - (\nabla \tu )^T 
\partial_{y_{n}} G(\by)
  \nabla \varphi ({\bf y}) \\
&  
+ \partial_{y_n} \tf |\partial F(\by)| \varphi(\by) 
- (\nabla \partial_{y_{n}} {\bf w}(\by))^{T}  G(\by)
\nabla \varphi(\by) \\
&- 
(\nabla {\bf w}(\by))^{T}  \partial_{y_n}G(\by) \nabla \varphi(\by) 
+ 
\tf \partial_{y_n} |\partial F(\by)| \varphi(\by) \bigg).
\end{split}
\label{perturbation:eqn1}
\end{equation}
\noindent where the influence function $\varphi({\bf y})$ satisfies
equation \eqref{setup:dual2}.
\label{perturbation:lemma3}
\end{lemma}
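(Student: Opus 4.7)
The plan is to reduce the differentiation of the QoI to the primal sensitivity identity of Lemma~\ref{perturbation:lemma1}, using the adjoint equation \eqref{setup:dual2} as the duality pairing. Because $Q$ is a bounded linear functional on $H^{1}_{0}(U)$ and $Q(\hat \bw)=0$ (by the choice of lifting $T$ discussed in Section~\ref{setup:QoI}), we have $Q(\by)=Q(\tu)=Q(\hat u(\cdot,\by))$, and $Q$ can be passed through the difference quotient provided the primal sensitivity exists in $H^{1}_{0}(U)$.

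First, I would invoke Remark~\ref{perturbation:remark3} to guarantee that $D_{\by}\tu(\byd)\in H^{1}_{0}(U)$ for every $\by,\byd\in W$; combined with the boundedness of $Q$, this lets me write
\[
\lim_{s\to 0^{+}}\frac{Q(\by+s\byd)-Q(\by)}{s}
= Q\bigl(D_{\by}\tu(\byd)\bigr).
\]
Next, I would apply the adjoint problem \eqref{setup:dual2} with the admissible test function $v=D_{\by}\tu(\byd)$, giving
\[
Q\bigl(D_{\by}\tu(\byd)\bigr)
= B\bigl(\by;\,D_{\by}\tu(\byd),\,\varphi(\by)\bigr).
\]
Finally, I would substitute $v=\varphi(\by)$ into the explicit formula for $B(\by;D_{\by}\tu(\byd),v)$ proved in Lemma~\ref{perturbation:lemma1}, which immediately yields the right-hand side of \eqref{perturbation:eqn1} written as a sum over $n=1,\dots,N$ of the five integrals involving $\partial_{y_{n}} G(\by)$, $\partial_{y_{n}}\tf$, $\partial_{y_{n}}|\partial F(\by)|$ and $\partial_{y_{n}}\tw$.

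The only delicate step is the commutation of the limit with $Q$. The difficulty is not conceptual but bookkeeping: one must verify that the difference quotient $\frac{1}{s}(\hat u(\by+s\byd)-\hat u(\by))$ converges in $H^{1}_{0}(U)$, and not merely pointwise or weakly. This follows from Lemma~\ref{perturbation:lemma1} together with Assumption~\ref{perturbation:assumption1}: the bilinear form $B(\by;\cdot,\cdot)$ is uniformly coercive under Assumptions~\ref{setup:Assumption1}--\ref{setup:Assumption2}, so solving the identity for $D_{\by}\tu(\byd)$ through the Lax--Milgram theorem gives an $H^{1}_{0}(U)$-valued derivative with a uniform bound in terms of $\|\byd\|$, $\mcC_{\mcG}(\by)$ and the $L^{1}$-norms of $\nabla_{\by}(f\circ F)$ and $\nabla_{\by}\hat\bw$. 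Once this is in hand, the linearity and boundedness of $Q$ allow the limit to pass inside, and the adjoint substitution closes the argument. No further computation beyond that already carried out in Lemmas~\ref{perturbation:lemma1} and~\ref{perturbation:lemma2} is needed.
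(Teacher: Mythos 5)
Your proof is correct, but it follows a genuinely different route from the paper's. The paper starts from the representation $Q(\by)=B(\by;\tu,\varphi(\by))$ and differentiates it by a product rule in $\by$, producing three contributions: one from $\partial_{y_n}G(\by)$, one from $D_{\by}\tu(\byd)$, and one from $D_{\by}\varphi(\by)(\byd)$. It then substitutes Lemma~\ref{perturbation:lemma1} with $v=\varphi(\by)$ and Lemma~\ref{perturbation:lemma2} with $v=\tu$; the term coming from Lemma~\ref{perturbation:lemma2} exactly cancels the explicit $\partial_{y_n}G$ term of the product rule, and what survives is $B(\by;D_{\by}\tu(\byd),\varphi(\by))$, i.e.\ the five-term formula. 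You reach the same quantity more directly: since $Q$ is a fixed bounded linear functional with $Q(\hat\bw)=0$, you commute $Q$ with the difference quotient to get $Q(D_{\by}\tu(\byd))$, invoke the adjoint equation \eqref{setup:dual2} once with the admissible test function $v=D_{\by}\tu(\byd)$, and then apply only Lemma~\ref{perturbation:lemma1}. What your route buys is that Lemma~\ref{perturbation:lemma2} --- and hence the differentiability of the influence function $\varphi$ in $\by$ --- is never needed, and the cancellation in the paper's computation becomes automatic; what it costs is that you must justify passing the limit through $Q$, i.e.\ convergence of the difference quotient $\frac{1}{s}(\tus-\tu)$ in a topology strong enough for $Q$ (weak convergence in $H^{1}_{0}(U)$ already suffices since $Q$ is bounded and linear). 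Your Lax--Milgram argument for this is at the same level of rigor as the paper's own treatment, which likewise relies on Remark~\ref{perturbation:remark3} to place $D_{\by}\tu(\byd)$ in $H^{1}_{0}(U)$. Both arguments produce the identical right-hand side of \eqref{perturbation:eqn1}.
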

\begin{proof}
\[
\begin{split}
\lim_{s \rightarrow 0^{+}}
& \frac{Q({\bf y} + s \delta {\bf y}) - Q({\bf y})}{s} \\
& = 
\lim_{s \rightarrow 0^{+}} \int_U \frac{1}{s} 
 (\nabla \tus)^T
G(\by+s\byd)
\nabla \varphi(\by+s\byd) \\
&- (\nabla \tu)^T
G(\by)
\nabla \varphi(\by)) \\
&= \sum_{n = 1}^{N} 
\delta y_n \int_U (\nabla \tu)^T 
\partial_{y_{n}} G(\by)
  \nabla \varphi ({\bf y}) \\
&+ \int_U (\nabla D_{\by} \tu
)^T(\byd) G(\by) 
  \nabla \varphi(\by) \\
&+ (\nabla \tu )^T 
G(\by) 
  \nabla D_{\by}\varphi(\by)(\byd). \\
\end{split}
\]
From Lemma \ref{perturbation:lemma1} with $v = \varphi(\by)$
and Lemma \ref{perturbation:lemma2} with $v =  \tu$
we obtain the result.
\end{proof}
\begin{lemma}
  \corb{Suppose that Assumptions \ref{setup:Assumption1},
    \ref{setup:Assumption2} and \ref{perturbation:assumption1} are
    satisfied then for any $\by, \byd \in W$ and for all $v \in
    H^{1}_{0}(U)$ we have that}
\[
\begin{split}
& D^{2}_{{\bf y}} Q({\bf y})(\delta {\bf y}, \delta {\bf y})  
 =
 -\sum_{n,m=1}^{N} \delta y_{n} \delta y_{m}  \bigg(  \int_{U} 
 (\nabla \tu )^T 
(\partial_{y_m} \partial_{y_n}
 G(\by))
\nabla \varphi({\bf y})  \\
&+ 
(\nabla \partial_{y_m} \partial_{y_{n}} {\bf w}(\by))^T
G(\by)\nabla\varphi(\by) + 
(\nabla\partial_{y_{n}} {\bf w}(\by))^{T}  
  \partial_{y_m} G(\by)\nabla\varphi(\by) \\
 &+ 
 (\nabla \partial_{y_m} {\bf w}(\by))^{T}  \partial_{y_n}G(\by)\nabla\varphi(\by) 
+
(\nabla {\bf w}(\by))^{T}  \partial_{y_m} \partial_{y_n} G(\by)\nabla\varphi(\by) \\ 
&-
\partial_{y_m} \partial_{y_n} \tf |\partial F(\by)| \varphi(\by)
- \partial_{y_n} \tf \partial_{y_m} |\partial F(\by)| \varphi(\by)
\\
& - \partial_{y_m} \tf \partial_{y_n} |\partial F(\by)| \varphi(\by)
-  \tf \partial_{y_m} \partial_{y_n} |\partial F(\by)| \varphi(\by)
\bigg)
\\
&- 
\sum_{n=1}^{N} \delta y_n \bigg( \int_{U}
  ( \nabla D_{\by}  
\tu )^{T}(\delta {\bf y})(\partial_{y_n}  G(\by))
\nabla \varphi({\bf y}) \\
&+ 
(\nabla \tu)^{T} (\partial_{y_n}  G(\by))\nabla D_{\by}
\varphi({\bf y}) (\delta {\bf y}) 
+
(\nabla\partial_{y_{n}} {\bf w}(\by))^{T}
  G(\by) \nabla D_{\by}  \varphi(\by)(\byd) \\
&+
(\nabla {\bf w}(\by))^{T}  \partial_{y_n} G(\by) \nabla D_{\by} \varphi(\by)(\byd)
- \partial_{y_n} \tf |\partial F(\by)| D_{\by} \varphi(\by)(\byd) \\
&-  
\tf \partial_{y_n} |\partial F(\by)| D_{\by} \varphi(\by)(\byd)
\bigg).  \\
\end{split}
\label{perturbation:eqn2}
\]
\label{perturbation:lemma5}
\end{lemma}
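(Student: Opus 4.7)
The plan is to differentiate the first-order formula in Lemma \ref{perturbation:lemma3} once more, using the definition of $D_{\by}^{2}$ as the limit
\[
D^{2}_{\by} Q(\by)(\delta\by,\delta\by) = \lim_{s\rightarrow 0^{+}} \left\langle \frac{D_{\by} Q(\by + s\delta\by) - D_{\by} Q(\by)}{s}, \delta\by\right\rangle,
\]
and then repackaging the resulting expression so that terms in which the derivative lands on one of the ``coefficient'' objects $G(\by)$, $|\partial F(\by)|$, $(f\circ F)(\cdot,\by)$, $\hat{\bw}(\cdot,\by)$ produce the double sum (Part 1), while terms in which the derivative lands on $\tilde{u}(\cdot,\by)$ or $\varphi(\by)$ are collected into the single sum involving $D_{\by}\tilde{u}(\delta\by)$ and $D_{\by}\varphi(\delta\by)$ (Part 2).

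More concretely, Lemma \ref{perturbation:lemma3} says that $D_{\by} Q(\by)(\delta\by) = \sum_{n=1}^{N} \delta y_{n} \mcF_{n}(\by)$, where $\mcF_{n}(\by)$ is the integral appearing as the $n$-th slot of equation \eqref{perturbation:eqn1}. First I would write
\[
D^{2}_{\by} Q(\by)(\delta\by,\delta\by) = \sum_{n=1}^{N}\delta y_{n} \lim_{s\rightarrow 0^{+}} \frac{\mcF_{n}(\by+s\delta\by) - \mcF_{n}(\by)}{s} = \sum_{n,m=1}^{N} \delta y_{n}\delta y_{m}\, \partial_{y_{m}} \mcF_{n}(\by),
\]
the exchange of limit and integral being justified by Assumption \ref{perturbation:assumption1} together with the regularity afforded by Lemmas \ref{perturbation:lemma1}, \ref{perturbation:lemma2} and Remark \ref{perturbation:remark3}, which place $D_{\by}\tilde{u}(\delta\by)$ and $D_{\by}\varphi(\delta\by)$ in $H^{1}_{0}(U)$ and control the coefficient partials uniformly on $W$.

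Next I would apply the product rule to each of the five terms inside $\mcF_{n}$. For the generic term $-(\nabla\tilde{u})^{T}\partial_{y_{n}} G(\by)\nabla\varphi$, differentiation in $y_{m}$ yields three contributions: one where $\partial_{y_{m}}$ hits $G$, giving $-(\nabla\tilde{u})^{T}\partial_{y_{m}}\partial_{y_{n}}G\nabla\varphi$; one where $\partial_{y_{m}}$ hits $\nabla\tilde{u}$, giving $-(\nabla\partial_{y_{m}}\tilde{u})^{T}\partial_{y_{n}}G\nabla\varphi$; and one where $\partial_{y_{m}}$ hits $\nabla\varphi$, giving $-(\nabla\tilde{u})^{T}\partial_{y_{n}}G\nabla\partial_{y_{m}}\varphi$. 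The same expansion is carried out term by term on the four remaining contributions $\partial_{y_{n}}(f\circ F)|\partial F|\varphi$, $-(\nabla\partial_{y_{n}}\hat{\bw})^{T}G\nabla\varphi$, $-(\nabla\hat{\bw})^{T}\partial_{y_{n}}G\nabla\varphi$, and $(f\circ F)\partial_{y_{n}}|\partial F|\varphi$, producing in each case three summands classified by whether $\partial_{y_{m}}$ hits a coefficient, $\hat{\bw}$, or $\varphi$.

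Finally I would sort and regroup. All summands in which both $\partial_{y_{n}}$ and $\partial_{y_{m}}$ act on coefficient quantities or on $\hat{\bw}$ retain both indices and form Part 1 of the claimed expression, with the indicated overall sign. Summands in which $\partial_{y_{m}}$ acts on $\tilde{u}$ or $\varphi$ are collapsed using the identities $\sum_{m}\delta y_{m}\partial_{y_{m}}\tilde{u} = D_{\by}\tilde{u}(\delta\by)$ and $\sum_{m}\delta y_{m}\partial_{y_{m}}\varphi = D_{\by}\varphi(\delta\by)$, which reduce the double sum over $(n,m)$ to a single sum over $n$ and produce Part 2. The main obstacle is purely bookkeeping: keeping track of signs, of which argument the partial hits, and of the symmetry $n\leftrightarrow m$ arising from the cross terms $\partial_{y_{n}}\hat{\bw}\cdot\partial_{y_{m}}G$ versus $\partial_{y_{m}}\hat{\bw}\cdot\partial_{y_{n}}G$ (both of which appear in the stated formula). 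Matching each of the fifteen expansion terms with its counterpart in the statement completes the derivation.
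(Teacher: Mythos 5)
Your proposal is correct and follows essentially the same route as the paper, which simply takes the first variation of the first-order formula in Lemma \ref{perturbation:lemma3} (equation \eqref{perturbation:eqn1}); you have merely spelled out the product-rule bookkeeping that the paper leaves implicit. The fifteen-term accounting and the split between derivatives landing on coefficient quantities (double sum) versus on $\tilde{u}$ and $\varphi$ (single sum, collapsed via $D_{\by}\tilde{u}(\delta\by)$ and $D_{\by}\varphi(\delta\by)$) matches the stated formula exactly.
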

\begin{proof}
Taking the first variation of equation \eqref{perturbation:eqn1} we
obtain the result.
\end{proof}

\subsection{\corb{Hybrid collocation-perturbation approach}}
\label{collocation-perturbation:varianceapprox}

\corb{We now consider a linear approximation of the QoI Q(\by) with
  respect to $\by$. For any $\by=\byo + \delta \by$, $\byo \in
  \R^{N}$, $\by \in \R^{N}$, the linear approximation has the form
\[
Q^{linear}(\by) := Q(\byo) +  <D_{\by} Q(\byo), \byd > 
\]
where $\byd = \by - \byo \in \R^{N}$. recall that $\Gamma = \Gamma_s
\times \Gamma_f$.  Assume that
\begin{enumerate}[i)]
\item $\by := [\by_{s}, \by_{f}]$, $\delta \by := [\delta
  \by_{s}, \byd_f]$, and $\by_{0} := [\by^{s},
  \by^{f}_{0}]$.
\item $\by^{s}$ takes values on $\Gamma_s$
and $\delta \by_{s} := \bo \in \Gamma_s$.
\item $\by_{0}^{f} := \bo \in \Gamma_f$ and $\byd_f = \byf$ takes
values on $\Gamma_f$.
\end{enumerate}
} We can now construct a linear approximation of the QoI with respect
to the allowable perturbation set $\Gamma$. Consider the following
linear approximation of $Q(\bys,\byf)$
\begin{equation}
  \corb{
    \hat{Q}(\bys, \byf) := 
Q(\bys, \by^f_0) +
\tilde{Q}(\bys, \by^{f}_{0}, \delta \by_{f} )
,}
\label{perturbation:approximation}
\end{equation}
and from Lemma $\ref{perturbation:lemma3}$ we have that
\[
\corb{
\begin{split}
\tilde{Q}(\bys, \by^{f}_{0}, \delta \by_{f} ) := 
 <D_{\by} Q(\bys, \by^{f}_{0}), \byd^{f}_0 > 
& = 
 \sum_{n = 1}^{N_{f}} 
 \delta y^f_n \int_{U} \alpha_n(x,\bys,\by^f_0)\,dx,
\end{split}
}
\]
where
\[
\corb{
\begin{split}
\alpha_{n}(\cdot,\bys,\by^f_0) &:= -(\nabla (\tilde{u} \circ F)(\cdot,\bys,\byo^f)
)^{T}  \partial_{y^f_n} G(\bys,\byo^f) \nabla
\varphi(\bys) \\
&+ \partial_{y^f_n} (f \circ F)(\cdot,\bys,\byo^f) |\partial F(\bys,\byo^f)| \varphi(\bys,\byo^f) \\
& - (\nabla \partial_{y^f_n}
{\bf w}(\bys,\byo^f))^T G(\bys,\byo^f)
\nabla \varphi(\bys,\byo^f) \\
&- 
(\nabla {\bf w}(\bys,\byo^f))^T \partial_{y^f_n}G(\bys,\byo^f) 
\nabla \varphi(\bys,\byo^f) \\
& 
+ (f \circ F)(\cdot, \bys,\byo^f) \partial_{y^f_n} 
|\partial F(\bys,\byo^f)| \varphi(\bys,\byo^f).
\end{split}
}
\]
\corb{
\begin{remark}
It is not hard to see that $<D_{\by} Q(\bys ,
\by^{f}_{0}), \delta \by^{f}_{0} >$ can be rewritten as
\begin{equation}
\begin{split}
 <D_{\by} Q(\bys, \by^{f}_{0}), \byd^{f}_0 > 
& = 
 \sum_{n = 1}^{N_{f}} \sqrt{\mu_{f,n}} \delta y^f_n 
\int_{U} \tilde \alpha_n(x,\bys,\by^f_0)\,dx
\end{split}
\label{perturbation:eqn3}
\end{equation}
where
\[
\begin{split}
\tilde \alpha_{n}(\cdot,\bys,\by^f_0) &:= -(\nabla (\tilde{u} \circ F)(\cdot,
\bys,\byo^f)
)^{T}  \partial_{\tilde y^f_n} G(\bys,\byo^f) \nabla
\varphi(\bys,\byo^f) \\
&+ \partial_{\tilde y^f_n} 
(f \circ F)(\cdot,\bys,\byo^f) |\partial F(\bys,\byo^f)| 
\varphi(\bys,\byo^f) \\
& - (\nabla \partial_{\tilde y^f_n}
{\bf w}(\bys,\byo^f))^T G(\bys,\byo^f)
\nabla \varphi(\bys,\byo^f) \\
&- 
(\nabla {\bf w}(\bys,\byo^f))^T \partial_{ \tilde y^f_n}G(\bys.\byo^f) 
\nabla \varphi(\bys,\byo^f) \\
& 
+ (f \circ F)(\cdot, \bys,\byo^f) \partial_{\tilde y^f_n} |\partial F(\bys,\byo^f)| \varphi(\bys,\byo^f)
\end{split}
\]
and $\tilde y^f_n := y^f_n\sqrt{\mu_{f,n}}$ for $n = 1, \dots, N_f$.
This will allow an explicit dependence of the mean and variance error
in terms of the coefficients $\mu_{f,n}$, $n = 1, \dots, N_f$, as show
in in Section \ref{erroranalysis}.
\label{perturbation:remark}
\end{remark}
}

\corb{The mean of $\hat{Q}(\bys, \byf)$ can be obtained as
\[
\esets{
  \hat{Q}(\bys,
  \byf)
}
=
\esets{
  Q(\bys, \by^f_0)} +
\esets{
\tilde{Q}(\bys, \by^{f}_{0}, \delta \by_{f} )
  }.
\]
From Fubini's theorem we have
\begin{equation}
\begin{split}
  \esets
  {Q(\bys, \byo^f)
  } 
  &=
  \int_{\Gamma_{s}} 
  Q(\bys, \bo)
  \rho_{s} (\by_{s})d\by_{s}.
\end{split}
\label{collocation-perturbation:mean} 
\end{equation}
and from equation \eqref{perturbation:eqn3}
\begin{equation}
\esets{
\tilde{Q}(\bys, \by^{f}_{0}, \delta \by_{f} )
}
=
\sum_{n = 1}^{N_{f}} \sqrt{\mu_{f,n}}
\int_{\Gamma_s} \int_{[-1,1]}
y^f_n 
\gamma_n(\bys,\0)
\rho(\bys, y^f_n)\,d\bys dy^f_n,
\label{perturbation:eqn4}
\end{equation}
where $\gamma_n(\bys,\0) := \int_{U} \tilde \alpha_n(x,\bys,\0)\,dx$,
$\rho(\bys)$ is the marginal distribution of $\rho(\by)$ with respect
to the variables $\bys$ and similarly for $\rho(\bys,y^f_n)$ ($n =
1,\dots,N_f$). The term $\esets{ \tilde{Q}(\bys, \by^{f}_{0}, \delta
  \by_{f} ) }$ is referred as the {\it mean correction}.
}

\corb{The variance of $\hat{Q}(\bys, \byf)$ can be computed as 
\[
  \begin{split}
 \corb{\var[\hat{Q}(\bys,
   \byf)]}
 &=
  \corb{
  \mathbb{E}[\hat{Q}(\bys, \byf)^2] -
  \mathbb{E}[\hat{Q}(\bys, \byf)]^{2} 
=
  \var[
Q(\bys, \by^f_0)
  ]
  }
\\
&
\underbrace{
  \corb{
+
\esets{
\tilde{Q}(\bys,\by^{f}_{0}, \delta \by_{f} )^2
}
  +
  2
  \esets{
    Q(\bys, \by^f_0)\tilde{Q}(\bys,\by^{f}_{0}, \delta \by_{f} )
  }
  }
}_{(I)}
  \\
  &
  \underbrace{
  \corb{
  -
  \esets{
    \tilde{Q}(\bys,\by^{f}_{0}, \delta \by_{f} )
  }^{2}
  -
  2
  \esets{
    Q(\bys, \by^f_0)
  }
  \esets{
     \tilde{Q}(\bys,\by^{f}_{0}, \delta \by_{f} )
  }
  }
  }_{(I)}
  .
\end{split}
  \]
  The term (I) is referred as the {\it variance correction} of
  $\var[Q(\bys, \by_0^f)]$.  From Fubini's theorem and equation
  \eqref{perturbation:eqn3} we have that
\begin{equation}
  \begin{split}
&
    \esets{
\tilde{Q}(\bys,\by^{f}_{0}, \delta \by_{f} )^2
}
    =
\sum_{k = 1}^{N_f} \sum_{n = 1}^{N_f}
\corb{\int_{\Gamma_L} \int_{[-1,1]} \int_{[-1,1]}
  \sqrt{\mu_{f,k}}\sqrt{\mu_{f,n}}
  y^f_k
  y^f_n}
  \\
  &\corb{
    \gamma_j(\bys,\by^f_0)
    \gamma_n(\bys,\by^f_0)
\rho(\bys, y^f_k, y^f_n)\,d\bys dy^f_k dy^f_n
},
\end{split}
\label{perturbation:eqn5}
\end{equation}
and
$ \esets{
    Q(\bys, \by^f_0)\tilde{Q}(\bys,\by^{f}_{0}, \delta \by_{f} )
  }$
is
equal to
\begin{equation}
\sum_{k = 1}^{N_f}  
\int_{\corb{\Gamma_s}} \int_{[-1,1]}
Q(\bys, \0)
\gamma_k(\bys, \0) y^f_k
\rho(\bys, y^f_k)\,\corb{d\bys} dy^f_k.
\label{perturbation:eqn6}
\end{equation}
Note that the mean $\esets {Q(\bys, \byo^f) }$ and variance $\var[
  Q(\bys, \by^f_0) ]$ depend only on the large variation variables
$\bys$.  If the region of analyticity of the QoI with respect to the
stochastic variables $\by_{s}$ is large, it is reasonable to
approximate $Q(\bys, \byo^f)$ with a Smolyak sparse grid
$\mcS^{m,g}_w[Q(\bys, \byo^f)]$. Thus in equations
\eqref{collocation-perturbation:mean} - \eqref{perturbation:eqn6}
$Q(\bys, \by^f_0)$ are replaced with the the sparse grid approximation
$\mcS^{m,g}_w[Q(\bys, \byo^f)]$ and for $n = 1, \dots, N_f$
$\gamma_n(\bys,\0)$ is replaced with $\mcS^{m,g}_w\gamma_n(\bys,\0)$.}

\corb{
\begin{remark} For the special case that $\rho(\by)=\rho(\bys)\rho(\byf)$,
  for all $\bys \in \Gamma_s$ and $\byf \in \Gamma_f$
  (i.e. independence assumption of the joint probability distribution
  $\rho(\bys,\byf)$), the mean and variance corrections are simplified. Applying
  Fubini's theorem and from equation
  \ref{collocation-perturbation:mean} the mean of $\hat{Q}(\bys,
  \byf)$ now becomes
  \[
  \begin{split}
\esets{
  \hat{Q}(\bys,
  \byf)
}
&=
\esets{
  Q(\bys, \by^f_0)} +
\underbrace{ \esets{
\tilde{Q}(\bys, \by^{f}_{0}, \delta \by_{f} )
  }
  }_{=0}
=
\esets{
  Q(\bys, \by^f_0)} \\
&=
  \int_{\Gamma_{s}} 
  Q(\bys, \bo)
  \rho_{s} (\by_{s})d\by_{s},
\end{split}
\]
i.e. there is no contribution from the small variations. Applying a
similar argument we have that
\[
  \begin{split}
    &
    \var[\hat{Q}(\bys,
   \byf)]
 =
  \mathbb{E}[\hat{Q}(\bys, \byf)^2] -
  \mathbb{E}[\hat{Q}(\bys, \byf)]^{2} 
=  
  \var[
Q(\bys, \by^f_0)
  ]
\\
&+
\esets{
\tilde{Q}(\bys,\by^{f}_{0}, \delta \by_{f} )^2
}
  +
  \underbrace{
  2
  \esets{
    Q(\bys, \by^f_0)\tilde{Q}(\bys,\by^{f}_{0}, \delta \by_{f} )
  }
  }_{= 0}
  \\
  &
    \underbrace{
    - 
  \esets{
    \tilde{Q}(\bys,\by^{f}_{0}, \delta \by_{f} )
  }^{2} }_{=0}
  -
  2
  \underbrace{
  \esets{
    Q(\bys, \by^f_0)
  }
  \esets{
     \tilde{Q}(\bys,\by^{f}_{0}, \delta \by_{f} )
  }
  }_{= 0}
  \\
&=
    \var[Q(\bys,
    \by_0^f)]
  +
  \underbrace{
  \sum_{n=1}^{N_{f}}
  \mu^f_n
  \int_{U}
   \alpha_{n}(x,\bys,\by^f_0
 ) \,dx \int_{U} \alpha_{n}(y,\bys,\by^f_0)
   \,dy
  }_{\mbox{Variance correction}}
  .
  \end{split}
  \]
  Notice that for this case the variance correction consists of $N_f$
  terms, thus the computational cost will depend linearly with respect
  to $N_f$.
\end{remark}
}

\section{\corb{Analytic correction}}
\label{analyticity}

\corb{In this section we show that the mean and variance corrections
  are analytic in a well defined region in $\C^{N_s}$ with respect to
  the variables $\by_s$. The size of the regions of analyticity will
  directly correlated with the convergence rate of a Smolyak sparse
  grid. To this end, let us establish the following definition: For
  any $0 < \beta < \tilde{\delta}$, for some constant $\tilde{\delta}
  > 0 $, define the following region in $\C^{N_s}$,
\begin{equation}
\Theta_{\beta,N_s} : = \left\{ {\bf z} \in \mathbb{C}^{N_s};\,
{\bf z} = \by + {\bf w},\,\by \in [-1,1]^{N_s},\,
 \sum_{l=1}^{N_s}  \sup_{x \in U}  \| B_l(x) \|_{2} 
\sqrt{\mu_{l}} |w_{l}| \leq \beta \}
\right\}.
\label{analyticity:region}
\end{equation}
Observe that the size of the region $\Theta_{\beta,N_s}$ is mostly
controlled by the decay of the coefficients $\mu_l$ and the size of
$\| B_l(x) \|_{2}$. Thus the smaller and faster the coefficient
$\mu_l$ decays the larger the region $\Theta_{\beta,N_s}$ will be.}

\corb{Furthermore, rewrite $\partial F(\cdot,\omega)$ as $\partial
  F(\by) = I + R(\by)$, with $R(\by) := \sum_{l=1}^{N}$
  $\sqrt{\mu_{l}}$ $B_{l}(x)y_{l}$. We now state the first
  analyticity theorem for the solution $(\tilde u \circ F)(\bys)$ with
  respect to the random variables $\by \in \Gamma$.}
\begin{theorem} \corb{Let $0 < \tilde{\delta} < 1$ 
then the solution $(\tilde{u} \circ F)(\cdot,\by):\Gamma
\rightarrow H^{1}_{0}(U)$ of Problem \ref{setup:Prob3} can be extended
holomorphically on $\Theta_{\beta,N}$ if}
\[
\beta < min \left\{ \tilde{\delta} \frac{\log{(2 - \gamma)}}{d + \log{(2 -
    \gamma)}}, \sqrt{1 + \tilde{\delta}^2/2} - 1 \right\}
\]
where $\gamma := \frac{2\tilde{\delta}^2 +
  (2-\tilde{\delta})^{d}}{\tilde{\delta}^{d} +
  (2-\tilde{\delta})^{d}}$.
\label{analyticity:theorem1}
\end{theorem}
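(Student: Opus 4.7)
The plan is to verify that Problem~\ref{setup:Prob3} admits a holomorphic solution $\bz \mapsto \hat u(\cdot,\bz) \in H^{1}_{0}(U)$ on $\Theta_{\beta,N}$ by extending the bilinear form $B(\bz;\cdot,\cdot)$ and the load $\tilde l(\bz;\cdot)$ to a holomorphic family of uniformly coercive sesquilinear forms and continuous antilinear functionals, and then applying a complex Lax--Milgram argument pointwise in $\bz$. Holomorphy of $\hat u$ follows by a standard Banach-space implicit function argument, since the inverse of a holomorphic, uniformly invertible operator family is holomorphic. The two thresholds on $\beta$ in the statement correspond to two distinct requirements: ensuring that $\det \partial F(\bz)$ remains uniformly bounded away from zero on $\Theta_{\beta,N}$ (the $\gamma$-dependent bound), and ensuring that the Hermitian real part of $G(\bz) = (a\circ F)\,\partial F(\bz)^{-1}\partial F(\bz)^{-T}\det\partial F(\bz)$ is uniformly positive definite (the $\sqrt{1+\tilde\delta^{2}/2}-1$ bound).

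Since $F(x,\cdot)$ is affine in $\by$, both $F$ and $\partial F$ extend as entire functions of $\bz \in \mathbb{C}^{N_s}$. Writing $\bz = \by + \bw$ with $\by \in [-1,1]^{N_s}$ gives $\partial F(\bz) = \partial F(\by) + R(\bw)$, and the definition of $\Theta_{\beta,N}$ yields $\|R(\bw)\|_{2} \leq \beta$ pointwise in $x \in U$, while Assumption~\ref{analyticity:assumption1}(2) gives $\tilde\delta \leq \sigma_{\min}(\partial F(\by)) \leq \sigma_{\max}(\partial F(\by)) \leq 2-\tilde\delta$. For the determinant I would factor $\det\partial F(\bz) = \det\partial F(\by)\,\det(I + \partial F(\by)^{-1}R(\bw))$, combine a sharp multiplicative bound $|\det(I+M)| \geq \prod_{i}(1-|\lambda_{i}(M)|)$ with the real bracket $\tilde\delta^{d} \leq |\det\partial F(\by)| \leq (2-\tilde\delta)^{d}$, and require that the perturbed determinant remain a controlled fraction of the real one; identifying $\gamma$ as the explicit combination of singular-value extremes that enters this bracket and inverting the resulting logarithmic inequality in $\beta$ yields $\beta < \tilde\delta\log(2-\gamma)/[d + \log(2-\gamma)]$. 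For the sesquilinear coercivity I would expand $C(\bz) = C(\by) + \partial F(\by)^{T}R(\bw) + R(\bw)^{T}\partial F(\by) + R(\bw)^{T}R(\bw)$, split $\bw = \bu + i\bv$ to separate contributions to the Hermitian real part, and derive $\mathrm{Re}(C(\bz)) \succeq \tilde\delta^{2}I - [2(2-\tilde\delta)\beta + \beta^{2}]I$; the condition $(1+\beta)^{2} \leq 1 + \tilde\delta^{2}/2$, equivalently $\beta < \sqrt{1 + \tilde\delta^{2}/2} - 1$, then gives $\mathrm{Re}(G(\bz)) \succeq c_{0}I$ with $c_{0}>0$ uniformly, which transfers to coercivity of $v \mapsto B(\bz; v, \bar v)$ on $H^{1}_{0}(U)$.

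With $B(\bz;\cdot,\cdot)$ entrywise holomorphic in $\bz$ (a rational expression in the entire matrix $\partial F(\bz)$ and the nonvanishing scalar $\det\partial F(\bz)$), uniformly bounded, and uniformly coercive on $\Theta_{\beta,N}$, and with $\tilde l(\bz;\cdot)$ holomorphic (because $a\circ F$ and $\hat g$ are $\bz$-independent by Assumption~\ref{analyticity:assumption1}(1), $f$ extends holomorphically by Assumption~\ref{analyticity:assumption1}(3), and every other ingredient is polynomial in $\bz$), the solution map $\bz \mapsto \hat u(\cdot,\bz)$ is holomorphic into $H^{1}_{0}(U)$ by Lax--Milgram applied $\bz$-wise plus the holomorphy of the operator inversion. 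The main obstacle will be the determinant step: the naive $(1-\|M\|_{2})^{d}$ estimate is too crude to reproduce the precise $\gamma$ in the theorem, so one must carry out a refined tracking of $\det(I + \partial F(\by)^{-1}R(\bw))$ across the full range of admissible singular values of $\partial F(\by)$, and then reconcile the resulting logarithmic bound with the coercivity threshold by taking the minimum of the two.
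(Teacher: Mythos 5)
The paper offers no self-contained proof of this theorem; it simply cites Theorem~7 of \cite{Castrillon2013}, and your outline follows essentially the same strategy as that reference: complexify $\partial F$, control the coefficient matrix $G(\bz)$ on $\Theta_{\beta,N}$, obtain uniform boundedness and coercivity of the complexified sesquilinear form, solve by a complex Lax--Milgram argument, and deduce holomorphy of $\bz \mapsto \hat u(\cdot,\bz)$ from holomorphy of the inverse of a holomorphic operator family. Your identification of the roles of the two thresholds (determinant control versus positivity of the real part of $C(\bz)$) is also consistent with how those constants arise there.

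There is, however, a genuine gap at exactly the point that constitutes the content of the theorem: the derivation of the explicit constant $\gamma = \frac{2\tilde{\delta}^2 + (2-\tilde{\delta})^{d}}{\tilde{\delta}^{d} + (2-\tilde{\delta})^{d}}$ and the threshold $\tilde{\delta}\log(2-\gamma)/(d+\log(2-\gamma))$. You flag this yourself as ``the main obstacle'' and leave it as a refinement to be carried out, so as written the argument only shows that \emph{some} $\beta_0>0$ works, not the stated quantitative bound. Moreover, the tool you propose for the determinant step, $|\det(I+M)| \geq \prod_i (1-|\lambda_i(M)|)$, controls the modulus of $\det\partial F(\bz)$, which is not what coercivity of the complexified form requires: one needs $\Real\left(\int_U \nabla v^* G(\bz)\nabla v\right) \geq c\|v\|^2_{H^1_0(U)}$, and since $\det\partial F(\bz)$ multiplies the whole matrix, a lower bound on its modulus is compatible with its real part being negative. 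What is actually needed is a perturbation estimate of the form $|\det\partial F(\bz) - \det\partial F(\by)| \leq (\text{small fraction of }\tilde{\delta}^d)$ (or a direct lower bound on the real part), and it is precisely this sharper estimate, tracked across the admissible range $[\tilde{\delta},\,2-\tilde{\delta}]$ of singular values of $\partial F(\by)$, that produces $\gamma$. A similar remark applies to the second threshold: your bound on the cross terms should involve $\sigma_{\max}(\partial F(\by)) \leq 2-\tilde{\delta}$ rather than $1$, so the claimed equivalence with $(1+\beta)^2 \leq 1+\tilde{\delta}^2/2$ needs to be rechecked against the precise decomposition used. Until these constants are derived, the proposal does not prove the theorem as stated.
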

\begin{proof}
See Theorem 7 in \cite{Castrillon2013}.
\end{proof}
\corb{
\begin{remark}
By following a similar argument, the influence function $\varphi(\by)$
can be extended holomorphically in $\Theta_{\beta,N}$ if
\[
\beta < min \left\{ \tilde{\delta} \frac{\log{(2 - \gamma)}}{d + \log{(2 -
    \gamma)}}, \sqrt{1 + \tilde{\delta}^2/2} - 1 \right\}
\]
\end{remark}
}

\corb{We are now ready to show that the linear approximation $\hat
  Q(\bys,\byf)$ can be analytically extended on $\Theta_{\beta,N_s}$.
  Note that it is sufficient to show that $\int_U \tilde
  \alpha_{n}(\cdot,\by_s,\0)$ can be analytically extended on
  $\Theta_{\beta,N_s}$.
}
\begin{theorem} \corb{Let $0 < \tilde{\delta} < 1$, 
if $\beta < min \{ \tilde{\delta} \frac{\log{(2 - \gamma)}}{d +
  \log{(2 - \gamma)}}, \sqrt{1 + \tilde{\delta}^2/2} - 1 \}$ then
there exists an extension of $\int_U \tilde
\alpha_{n}(\cdot,\by_s,\0)$, for $n = 1, \dots, N_f$, which is
holomorphic on $\Theta_{\beta,N_s}$.}
\label{analyticity:theorem2}
\end{theorem}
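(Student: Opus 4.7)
The strategy is to show that each factor in the integrand of $\int_{U} \tilde\alpha_{n}(\cdot, \bys, \0)\,dx$ admits a holomorphic extension from $[-1,1]^{N_s}$ to $\Theta_{\beta, N_s}$, and then to verify that integration over $U$ preserves holomorphy. Two blocks of factors appear: those already controlled by Theorem \ref{analyticity:theorem1} (namely the primal solution $(\tilde u \circ F)(\cdot, \bys, \0)$ and the influence function $\varphi(\cdot, \bys, \0)$, viewed as $H^1_0(U)$-valued maps), and the remaining geometric and data factors involving $\partial F$, $G(\bys, \0)$, the derivative $\partial_{\tilde y^f_n} G(\bys, \0)$, and $(f \circ F)(\cdot, \bys, \0)$.

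First, I would verify that Theorem \ref{analyticity:theorem1} applies to $(\tilde u \circ F)(\cdot, \bys, \0)$ and $\varphi(\cdot, \bys, \0)$: the inclusion $\{(\bys, \0) : \bys \in \Theta_{\beta, N_s}\} \subset \Theta_{\beta, N}$ is immediate from definition \eqref{analyticity:region} (the tail coordinates contribute nothing to the $w$-sum), so restricting the existing extensions yields Bochner-holomorphic maps on $\Theta_{\beta, N_s}$. Analogously, $\hat{\bf w} = T(\hat g)$ is independent of $\bys$ by Assumption \ref{analyticity:assumption1}(1), so all $\bw$-dependent terms are either constant in $\bys$ (hence trivially holomorphic) or reduce, after differentiation in $y^f_n$, to $\bys$-independent elements of $H^1_0(U)$.

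Next, I would treat the geometric factors. Since $\partial F(x, \bz, \0) = I + \sum_{l=1}^{N_s} B_{s,l}(x) \sqrt{\mu_{s,l}}\, z_l$ is an entire polynomial of degree one in $\bz$, and since for $\bz = \by + \bw \in \Theta_{\beta, N_s}$
\[
\|\partial F(x, \bz, \0) - I\|_2 \leq \sum_{l=1}^{N_s} \|B_{s,l}(x)\|_2 \sqrt{\mu_{s,l}}\, |z_l| \leq (1-\tilde\delta) + \beta < 1
\]
uniformly in $x \in U$ (using Assumption \ref{analyticity:assumption1}(2) and $\beta < \tilde\delta$), a Neumann-series argument yields that $\partial F(\bz, \0)^{-1}$, $\det \partial F(\bz, \0)$, and hence $G(\bz, \0)$ are holomorphic in $\bz$ on $\Theta_{\beta, N_s}$ with uniform $L^{\infty}(U)$ bounds. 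For $\partial_{\tilde y^f_n} G(\bz, \0)$, the identities $\partial_{y_k} \partial F^{-1} = -\partial F^{-1} (\partial_{y_k} \partial F) \partial F^{-1}$ and $\partial_{y_k} \det \partial F = \det \partial F \cdot \operatorname{tr}(\partial F^{-1} \partial_{y_k} \partial F)$, together with $\partial_{y^f_n} \partial F = \sqrt{\mu_{f,n}}\, B_{f,n}$ evaluated at $\byf = \0$, reduce the derivative to rational expressions in $\partial F(\bz, \0)$, hence holomorphic in $\bz$ and uniformly bounded in $x$. For $(f \circ F)$ and its $y^f_n$ derivative, the map $\bz \mapsto F(x, \bz, \0)$ is affine and, combined with Assumption \ref{analyticity:assumption1}(3) (entire extension of $f$), gives holomorphy of $(f \circ F)(\cdot, \bz, \0)$ and $\partial_{\tilde y^f_n}(f \circ F)(\cdot, \bz, \0)$ together with uniform bounds.

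The final step is to combine the factors inside $\int_U$. The integrand decomposes into finitely many bilinear pairings of the form $\int_U (\nabla v(\bz))^T M(\cdot, \bz) \nabla w(\bz)\, dx$ and $\int_U h(\cdot, \bz) p(\bz)\, dx$, where $v(\bz), w(\bz) \in H^1_0(U)$ and $p(\bz) \in \mathbb{R}$ are Bochner- or $\mathbb{C}$-valued holomorphic maps on $\Theta_{\beta, N_s}$, while the kernels $M(\cdot, \bz)$ and $h(\cdot, \bz)$ are holomorphic in $\bz$ with uniform $L^\infty(U)$ bounds. Since a bounded bilinear form composed with Bochner-holomorphic arguments yields a holomorphic scalar function, $\int_U \tilde\alpha_n(\cdot, \bz, \0)\, dx$ is holomorphic on $\Theta_{\beta, N_s}$. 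The main obstacle is precisely this last passage: one must carefully marshal the uniform $L^\infty(U)$ bounds on the matrix/scalar kernels from step three with the uniform $H^1_0(U)$ bounds on $(\tilde u \circ F)$ and $\varphi$ coming from the proof of Theorem \ref{analyticity:theorem1} to justify the exchange of complex differentiation and integration --- equivalently, to apply a Morera/Cauchy test slice-wise in each complex variable $z_l$ for $l = 1,\dots,N_s$.
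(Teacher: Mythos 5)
Your proposal is correct and follows essentially the same route as the paper: Neumann-series and Jacobi-formula arguments give holomorphy and uniform $L^{\infty}(U)$ bounds for $\partial F^{-1}$, $\det\partial F$ and $\partial_{\tilde y^f_n}G$, Theorem \ref{analyticity:theorem1} supplies the $H^1_0(U)$-valued holomorphic extensions of $\tilde u\circ F$ and $\varphi$, and the terms are combined through bounded multilinear pairings with a slice-wise (one complex variable at a time) holomorphy check. The only cosmetic difference is that the paper carries out this last combination concretely via absolutely convergent power-series expansions (with Cauchy--Schwarz) and then patches the separate one-variable extensions together with Hartogs' theorem and Osgood's lemma, whereas you invoke the equivalent abstract fact directly.
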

\begin{proof}
  \corb{Consider the extension of $\bys \rightarrow \bz_s$, where
    $\bz_s \in \C^{N_s}$. We first show that}
\begin{equation}
\corb{\int_U \nabla (\tilde{u} \circ F)(\by_s,\by_f) )^{T}
\partial_{\tilde y^f_n} G(\by_s,\by_f) \nabla
\varphi(\by_s,\by_f)}
\label{analyticity:eqn2}
\end{equation}
\corb{for $n = 1,\dots,N_f$ can be extended on $\Theta_{\beta,N_s}$.
  Note the for the sake of reducing notation clutter we dropped the
  dependence of the variable $x \in U$ and it is understood from
  context unless clarification is needed.}

\corb{We now show that each entry of the matrix
  $\partial_{\tilde y^f_n}G(\bz_s,\by_f)$ is holomorphic on
  $\Theta_{\beta,N_s}$ for all $\by \in \Gamma_f$. First, we have
  that}
\[
\begin{split}
\partial_{\tilde y^f_n}G(\bz_s,\by_f) 
& =
(\partial_{\tilde y^f_n} (a \circ F)(\bz_s,\by_f) )
C^{-1}(\bz_s,\by_f)
det(\partial F(\bz_s,\by_f)) \\
& + (a \circ F)(\bz_s,\by_f)
\left(
C^{-1}(\bz_s,\by_f)
\partial_{\tilde y^f_n}
det(\partial F(\bz_s,\by_f))
\right. \\
& \left. + det(\partial F(\bz_s,\by_f))
\partial_{\tilde y^f_n} C^{-1}(\bz_s,\by_f)
\right).
\end{split}
\]
From Assumption \ref{analyticity:assumption1} $(a \circ
F)(\cdot,\bz_s)$ and $\partial_{\tilde y^f_l} (a \circ F)(\cdot,\bz_s,\byf) =
0$ \corb{are holomorphic on $\Theta_{\beta,N_s}$ for all $\byf \in
  \Gamma_f$.} From matrix calculus identities we have that
\[
\partial_{\tilde y^f_l} C^{-1}
 (\bz_s,\by_f)
 = - C^{-1} (\bz_s,\by_f)
 \left(
\partial C_{\tilde y_n^f} 
(\bz_s,\by_f)  
 \right) C^{-1}(\bz_s,\by_f).
\]
Since $\beta < \tilde{\delta}$ the series
\[
\partial F^{-1}(\bz_s,\by) 
= (I + R(\bz_s,\by_f) )^{-1} = I +
\sum_{k=1}^{\infty} R(\bz_s,\by_f)
^{k}
\]
is convergent for all ${\bf z_s} \in \Theta_{\beta}$ and for all $
\by_f \in \Gamma_f$. It follows that each entry of $\partial
F(\bz_s,\by)^{-1}$ and therefore $C(\bz_s,\by) ^{-1}$ is holomorphic
\corb{for all $\bz_s \in \Theta_{\beta,N_s}$} and for all $ \by_f \in
\Gamma_f$.  We have that $det(\partial F(\bz_s,\by_f))$ and $\partial
C_{\tilde y_n^f} (\bz_s,\by_f)$ are functions of a finite polynomial
therefore they are holomorphic for all $ \bz_s \in \Theta_{\beta,N_s}$
and $\by_f \in \Gamma_f$.

From Jacobi's formula we have that for all $ \bz_s \in
\Theta_{\beta,N_s}$ and $\by_f \in \Gamma_f$
\[
\begin{split}
\partial_{\tilde y^f_l}
 det(\partial F(\bz_s,\by_f))
& = 
tr(Adj(\partial F(\bz_s,\by_f)) \partial_{y^f_l}
 \partial F(\bz_s,\by_f)
) \\
& = 
 det(\partial F(\bz_s,\by_f))
tr(\partial F(\bz_s,\by_f)^{-1} B^f_n(x)). \\
\end{split}
\]
It follows that for all $ \bz_s \in \Theta_{\beta,N_s}$ and $\by_f \in
\Gamma_f$ 
$\partial_{\tilde y^f_n}G(\bz_s,\by_f)$ are holomorphic.

\corb{We shall now prove the main result. First, extend $\bys$ along
  the $n^{th}$ dimension as $y_n \rightarrow z_n$, $z_n \in \C$ and
  let $\tilde \bz_s = [z_1, \dots, z_{n-1},z_{n+1}, \dots, z_{N_s}]$.
From Theorem \ref{analyticity:theorem1} we have that $(\tilde{u} \circ
F)(\bz_s,\by_f)$ and $\varphi(\bz_s,\by_f)$ are holomorphic for $\bz_s
\in \Theta_{\beta,N_s}$ and $\by_f \in \Gamma_f$ if
\[
\beta < min \{
\tilde{\delta} \frac{\log{(2 - \gamma)}}{d + \log{(2 - \gamma)}},
\sqrt{1 + \tilde{\delta}^2/2} - 1 \}.
\]
Thus from Theorem 1.9.1 in \cite{Gohberg2009} the series
\[
(\tilde{u} \circ F)(\cdot, \bz_s,\by_f)
= 
\sum_{l=0}^{\infty} \tilde{u}_l(\cdot, \tilde \bz_s,\by_f)
z^{l}_n
\,\,\,\mbox{and}\,\,\,
\varphi(\bz_s,\by_f)  
= \sum_{l=0}^{\infty} \bar{\varphi}_l(\cdot, \tilde \bz_s,\by_f)
z^{l}_n,
\]
are absolutely convergent in $H^{1}_{0}(U)$ for all $z \in \C$,
  where $\tilde{u}_l(\cdot,\tilde \bz_s,\by_f),$ $\tilde
  \varphi_l(\cdot, \tilde \bz_s,\by_f) \in H^{1}_{0}(U)$ for $l =
  0,\dots,\infty$. Furthermore,}
\[
\begin{split}
\| \nabla (\tilde{u} \circ F)(\cdot,\bz_s,\by_f)\|_{L^2(U)} 
&\leq
\sum_{l=0}^{\infty} \| \nabla \tilde{u}_l(\cdot,\tilde
\bz_s,\by_f)\|_{L^2(U)}
|z_n|^{l} \\
&\leq
\sum_{l=0}^{\infty} 
\| \tilde{u}_l(\cdot, \tilde \bz_s,\by_f)\|_{H^1_0(U)}
|z_n|^{l}
\end{split}
\]
\corb{i.e. $\nabla (\tilde{u} \circ F)(\cdot,\bz_s,\by_f)$ is
  holomorphic on $\Theta_{\beta,N_s}$ along the $n^{th}$ dimension. A
  similar argument is made for $\nabla \varphi(\cdot,\bz_s,\by_f)$.}

Since the matrix $\partial_{\tilde y^f_n}G(\bz_s,\by_f)$ is holomorphic
for all $ \bz_s \in \Theta_{\beta,N_s}$ and $\by_f \in \Gamma_f$ then
we can rewrite the $(i,j)$ entry as \corb{$\sum_{k=0}^{\infty}
  g^{i,j}_{k}(\cdot, \tilde \bz_s,\by_f) z^{k}_n$ where
  $g^{i,j}_{k}(\cdot, \tilde \bz_s,\by_f) \in L^{\infty}(U)$}.  For
each $i,j = 1, \dots, d$ consider the map
\corb{
\[
\begin{split}
T_{i,j}
&: = 
\int_{U} 
\partial_{\tilde y^f_n}G(\bz_s,\by_f)(i,j)
\partial_{x_i}\tilde{u}(\cdot,\bz_s,\by_f) \partial_{x_j} \varphi
(\cdot,\bz_s,\by_f) \\
&= 
\sum_{k,l,p=0}^{\infty} z_n^{k+l+p} 
\int_{U}
g^{i,j}_{k}
(\cdot,\tilde \bz_s,\by_f)
\partial_{x_{i}}
\tilde{u}_{l}(\cdot,\tilde \bz_s,\by_f)
\partial_{x_{i}} \tilde \varphi_{p}(\cdot,\tilde \bz_s,\by_f).
\end{split}
\]
For $i,j = 1, \dots, d$, for all $ \bz_s \in \Theta_{\beta,N_s}$
and $\byf \in \Gamma_f$}
\[
\begin{split}
|T_{i,j}| 
& \leq
\sum_{k,l,p=0}^{\infty} |z_n|^{k+l+p} 
\int_{U}
|g^{i,j}_{k}
(\cdot,\bz_s,\by_f)
\partial_{x_{i}}
\tilde{u}_{l}(\cdot, \tilde \bz_s,\by_f)
\partial_{x_{i}} \varphi_{p}(\cdot, \tilde \bz_s,\by_f)|
\\
& 
(\mbox{From Cauchy Schwartz it follows that}) \\
& \leq 
\sum_{k,l,p=0}^{\infty} |z_n|^{k+l+p} 
\|g^{i,j}_{k}
(\cdot, \tilde \bz_s,\by_f)\|_{L^{\infty}(U)}
\|
\partial_{x_{i}}
\tilde{u}_{l}(\cdot, \tilde \bz_s,\byf)\|_{L^{2}(U)} \\
&\|
\partial_{x_{i}} \varphi_{p}(\cdot, \tilde \bz_s,\byf)\|_{L^{2}(U)} \\
& \leq 
\sum_{k,l,p=0}^{\infty} |z_n|^{k+l+p} 
\|g^{i,j}_{k}
(\cdot, \tilde \bz_s,\by_f)
\|_{L^{\infty}(U)}
\|
\tilde{u}_{l}(\cdot, \tilde \bz_s,\by_f)\|_{H^1_0(U)} \\
&\|
\varphi_{p}(\cdot, \tilde \bz_s,\by_f)\|_{H^1_0(U)} < \infty.
\end{split}
\]
\corb{Thus equation \eqref{analyticity:eqn2} can be analytically
  extended on $\Theta_{\beta,N_s}$ along the $n^{th}$ dimensions for
  all $\by_f \in \Gamma_f$.  Equation \eqref{analyticity:eqn2} can now
  be analytically extended on the entire domain $\Theta_{\beta, N_s}$.
  Repeat the analytic extension of \eqref{analyticity:eqn2} for $n =
  1, \dots, N_s$.  From Hartog's Theorem it follows that
  \eqref{analyticity:eqn2} is continuous in $\Theta_{\beta,N_s}$. From
  Osgood's Lemma it follows that \eqref{analyticity:eqn2} is
  holomorphic on $\Theta_{\beta,N_s}$. Following a similar argument as
  for \eqref{analyticity:eqn2} we can analytically extended the rest
  of the terms of $\alpha_{n}(\cdot, \by_s, \by_f)$ on
  $\Theta_{\beta,N_s}$ for $n = 1,\dots,N_f$.}
\end{proof}

\section{Error analysis}
\label{erroranalysis}
In this section we analyze the error between the exact QoI
$Q(\bys,\byf)$ and the sparse grid hybrid perturbation
approximation $\mcS^{m,g}_w [\hat{Q}_h(\by_{s}, \by_{f}) ]$.  With a
slight abuse of notation by $\mcS^{m,g}_w [\hat{Q}_{h}(\by_{s},
  \by_{f}) ]$ we mean the two sparse grids approximations:
\[
\corb{\mcS^{m,g}_w [\hat{Q}_{h}(\by_{s}, \by_{f}) ] := \mcS^{m,g}_w
  [Q_{h}(\by_s,\bo)] + \sum_{n=1}^{N_f} \sqrt{\mu_{f,n}} y^{f}_{n}
  \mcS^{m,g}_w [\int_U \tilde \alpha_{n,h}(\cdot,\by_s,\bo)],}
\]
\corb{where $\alpha_{n,h}(\cdot,\by_s,\bo)$, for $n = 1, \dots, N_f$,
  and $Q_h(\bys,\byf)$ are the finite element approximations of
  $\alpha_{n}(\cdot,\by_s,\bo)$ and $Q(\bys,\byf)$ respectively. It is
  easy to show that $\var(Q(\bys,\byf)) -
  \var(\mcS^{m,g}_{\lv}[\hat{Q}_{h}(\bys,\byf)])$ is equal to}
\[
\underbrace{
  \mathbb{E}[Q^{2}(\bys,\byf)
 -
 \mcS^{m,g}_{\lv}[\hat{Q}_{h}(\bys,\byf)]^2]
}_{(I)} -
\underbrace{(\mathbb{E}[Q(\bys,\byf)]^{2} - 
\mathbb{E}[\mcS^{m,g}_{\lv}[\hat{Q}_{h}(\bys,\byf) ]^{2}])
}_{(II)}.
\]
\corb{
\noindent (I) Applying Jensen's inequality we have that
\begin{equation}
\begin{split}
& 
|\mathbb{E}[Q^{2}(\by)
 -
\mcS^{m,g}_{\lv}[\hat{Q}_{h}(\by)]
^2]| 
\leq
\| Q(\by) + \mcS^{m,g}_{\lv}[\hat{Q}_{h}(\by)]
\|_{L^{\infty}_{\rho}(\Gamma)} 
( \| Q(\by)
  - \hat{Q}(\by) \|_{L^2_{\rho}(\Gamma)} \\
& +
\|\hat{Q}(\by) -
  \hat{Q}_{h}(\by)\|_{L^1_{\rho}(\Gamma)}
 +
  \| \hat{Q}_{h}(\by) -
    \mcS^{m,g}_{\lv}[\hat{Q}_{h}(\by)]]\|_{L^2_{\rho}(\Gamma)}).
\end{split}
\label{erroranalysis:eqn1}
\end{equation}
(II) Similarly, we have that
\[
\begin{split}
& 
|\mathbb{E}[Q(\by_s,\by_f)]^{2} - \mathbb{E}[\mcS^{m,g}_{\lv}
[\hat{Q}_{h}(\by_s,\by_f)
]]^{2}|
\leq 
\|Q(\by) + \mcS_{w}^{m,g} \hat{Q}_{h}(\by) \|_{L^1_{\rho}(\Gamma)} \\
& 
\|Q(\by)
- \mcS_{w}^{m,g} \hat{Q}_{h}(\by_{s}) \|_{L^1_{\rho}(\Gamma)} \\
&\leq 
\|Q(\by) + \mcS_{w}^{m,g} \hat{Q}_{h}(\by) \|_{L^1_{\rho}(\Gamma)}
(\|Q(\by) - \hat{Q}(\by) \|_{L^1_{\rho}(\Gamma)} 
+ \|\hat{Q}(\by) - \hat{Q}_{h}(\by) \|_{L^1_{\rho}(\Gamma)} \\
&
+ \|\hat{Q}_{h}(\by) - \mcS_{w}^{m,g} \hat{Q}_{h}(\by) \|_{L^1_{\rho}(\Gamma)})
\end{split}
\]
Applying Jensen inequality
\begin{equation}
  \begin{split}
& 
|\mathbb{E}[Q(\by_s,\by_f)]^{2} - \mathbb{E}[\mcS^{m,g}_{\lv}
[\hat{Q}_{h}(\by_s,\by_f)
]]^{2}|    
\leq 
\|Q(\by) + \mcS_{w}^{m,g} \hat{Q}_{h}(\by) \|_{L^2_{\rho}(\Gamma)} \\
&(
\|Q(\by) - \hat{Q}(\by) \|_{L^2_{\rho}(\Gamma)} 
+ \|\hat{Q}(\by) - \hat{Q}_{h}(\by) \|_{L^1_{\rho}(\Gamma)} 
+ 
\|\hat{Q}_{h}(\by) - \mcS_{w}^{m,g} \hat{Q}_{h}(\by) \|_{L^2_{\rho}(\Gamma)}).\\
\end{split}
\label{erroranalysis:eqn2}
\end{equation}
\corb{Combining equations \eqref{erroranalysis:eqn1} and
  \eqref{erroranalysis:eqn2} we have that
\[
\begin{split}
&|\var(Q(\by)) -
\var(\mcS^{m,g}_{\lv}[\hat{Q}_{h}(\by)])| 
\leq
C_P\underbrace{\| Q(\by) - \hat{Q}(\by)
    \|_{L^2_{\rho}(\Gamma)}}_{\mbox{Perturbation}} \\
&+ 
C_{PFE}\underbrace{\|
    \hat{Q}(\by) - \hat{Q}_{h}(\by) \|_{L^1_{\rho}(\Gamma)}}_{\mbox{Finite
      Element}} 
+ 
  C_{PSG} \underbrace{
  \| \hat{Q}_h(\by) - \mcS^{m,g}_w[\hat{Q}_h(\by)] \|_{L^2_{\rho}(\Gamma)}
}_{\mbox{Sparse Grid}}.
\end{split}
\]
Similarly we have that the mean error satisfies the following bound:
\[
\begin{split}
&|\mathbb{E}[Q(\bys,\byf)
 -
 \mcS^{m,g}_{\lv}[\hat{Q}_{h}(\bys,\byf)]|
\leq
\underbrace{\| Q(\by) - \hat{Q}(\by)
    \|_{L^2_{\rho}(\Gamma)}}_{\mbox{Perturbation (I)}} \\
&+ 
\underbrace{\|
    \hat{Q}(\by) - \hat{Q}_{h}(\by) \|_{L^1_{\rho}(\Gamma)}}_{\mbox{Finite
      Element (II)}} 
+ 
\underbrace{
  \| \hat{Q}_h(\by) - \mcS^{m,g}_w[\hat{Q}_h(\by)] \|_{L^2_{\rho}(\Gamma)}
}_{\mbox{Sparse Grid (III)}}.
\end{split}
\]
  }
  }
\corb{
  \begin{remark} For the case that probability distributions
    $\rho(\bys)$ and $\rho(\byf)$ are independent then the mean
    correction is exactly zero, thus the mean error would be bounded
    by the following terms
\[
\begin{split}
&|\eset{Q(\by_s,\by_f)} -
\eset{\mcS^{m,g}_w[Q_{h}(\by_s)}]| 
\leq
C_{T} \underbrace{\| Q(\by_{s},\by_{f})
 - Q(\by_{s})
  \|_{L^2_{\rho}(\Gamma)}}_{\mbox{Truncation}} \\
&+
C_{FE}\underbrace{\|Q(\by_{s}) - Q_{h}(\by_{s})
  \|_{L^1_{\rho}(\Gamma_s)}}_{\mbox{Finite Element} } 
+ 
C_{SG}
\underbrace{
  \| Q_{h}(\by_{s}) - \mcS^{m,g}_w[Q_{h}(\by_{s})] 
\|_{L^2_{\rho}(\Gamma_s)}
}_{\mbox{Sparse Grid}}
\end{split}
\]
\noindent for some positive constants $C_{T},C_{FE}$ and $C_{SG}$. We
refer the reader to Section 5 in \cite{Castrillon2013} for the
definition of the constants and bounds of these errors.
  \end{remark}
  }
\subsection{Perturbation error}
\label{erroranalysis:perturbation}
In this section we analyze the error term (I):
\corb{
\begin{equation}
\| Q(\bys,\byf) - \hat{Q}(\bys,\byf) \|_{L^2_{\rho}(\Gamma)}
 =
\| \mcR(\bys,\byd_f) \|_{L^2_{\rho}(\Gamma)}  
\label{errorbounds:perturbation}
\end{equation}
where $\byf = \byo^f + \byd_f$, $\byo^f = \0$ and the remainder is
equal to
\begin{equation}
\mcR(\bys,\byd_f) :=
\frac{1}{2} D^{2}_{\byf} Q(\bys + \theta \byd_f
)(\byd_f, \byd_f)
\label{errorbounds:residual}
\end{equation}
\noindent for some $\theta \in (0,1)$.  
From the approximation $\hat{Q}(\bys, \byf)$ in equation
\eqref{perturbation:approximation} and the expansion from
\eqref{perturbation:representation} it is clear that
\[
|Q(\bys,\byf) - \hat{Q}(\bys, \byf)| \leq 
\frac{1}{2} D^{2}_{\byf} Q(\bys + \theta \byd_f
)(\byd_f, \byd_f).
\]
We shall now prove a series of lemmas that will be used to bound the
perturbation error.}

\corb{Recall from Remark \ref{perturbation:remark} that we use the
  approximation given by equation \eqref{perturbation:eqn3}
\[
\begin{split}
 <D_{\by} Q(\bys, \by^{f}_{0}), \byd^{f}_0 > 
& = 
 \sum_{n = 1}^{N_{f}} \sqrt{\mu_{f,n}} \delta \tilde y^f_n 
\int_{U} \tilde \alpha_n(x,\bys,\by^f_0)\,dx
\end{split}
\]
where the variable dependence is on $\tilde y^{f}_{n} =
\sqrt{\mu_{f,n}} y^{f}_{n}$ instead of $y^{f}_{n}$ for $n =
1,\dots,N_f$. This will allow an explicit dependence of the mean and
variance error on decay parameters $\mu^{f}_{n}$ of the tail. To make
the exposition clearer we use the following notation, let
\[
\tilde \by_f :=
\begin{bmatrix}
  \tilde y^{f}_1 \\
  \vdots \\
  \tilde y^{f}_{N_s}
\end{bmatrix}
=
\begin{bmatrix}
  \sqrt{\mu_{f,1}} y^{f}_1 \\
  \vdots \\
  \sqrt{\mu_{f,N_f}} y^{f}_{N_s}
\end{bmatrix}
,
\tilde \by :=
\left[
\begin{array}{c}
  \by_s \\
  \hline
  \tilde \by_f
\end{array}
\right]
\]
and for all $\by_f \in \Gamma_f$ we have that $\delta \tilde \by_f :=
\tilde \by_f$.
}
\corb{\begin{lemma} For all $n = 1, \dots, N_{f}$ and for all $ \by \in \Gamma$
\[
\sup_{x \in U}
\sigma_{max} \left(
\partial_{\tilde y^{f}_{n}}
\partial F^{-1}(\by) \right)
\leq \sup_{x \in U} \| B_{f,n}(x) \|_2 
\F^{-2}_{min} 
\]
\label{errorbounds:lemma1}
\end{lemma}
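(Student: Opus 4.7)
The plan is to use the standard matrix-calculus identity for differentiating a matrix inverse together with the singular value bounds from Assumption \ref{setup:Assumption1}.

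First, from the expansion \eqref{analyticity:eqn1} and the reparameterization $\tilde y^f_n = \sqrt{\mu_{f,n}} y^f_n$ introduced in Remark \ref{perturbation:remark}, I would compute $\partial_{\tilde y^f_n} \partial F(\by)$ directly: only the $(n+N_s)$-th term of the series contributes, and the $\sqrt{\mu_{f,n}}$ factor cancels, giving
\[
\partial_{\tilde y^f_n} \partial F(x,\by) = B_{f,n}(x).
\]
Next, applying the identity $\partial_{\tilde y^f_n} A^{-1} = -A^{-1} (\partial_{\tilde y^f_n} A) A^{-1}$ with $A = \partial F(\by)$ yields
\[
\partial_{\tilde y^f_n} \partial F^{-1}(\by) = -\partial F^{-1}(\by) \, B_{f,n}(x) \, \partial F^{-1}(\by).
\]

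Then I would take spectral norms. Using the submultiplicativity $\sigma_{max}(ABC) \leq \sigma_{max}(A)\sigma_{max}(B)\sigma_{max}(C)$ and the identity $\sigma_{max}(\partial F^{-1}(\by)) = 1/\sigma_{min}(\partial F(\by))$, Assumption \ref{setup:Assumption1} gives $\sigma_{max}(\partial F^{-1}(\by)) \leq \F_{min}^{-1}$ almost everywhere in $U$ and for every $\by \in \Gamma$. Hence
\[
\sigma_{max}\bigl(\partial_{\tilde y^f_n} \partial F^{-1}(\by)\bigr)
\leq \F_{min}^{-2}\,\|B_{f,n}(x)\|_2.
\]
Taking the supremum over $x \in U$ on both sides concludes the proof.

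There is no real obstacle here: the argument is a one-line computation once the two inputs (the derivative-of-inverse identity and the singular value bound from Assumption \ref{setup:Assumption1}) are combined. The only small point requiring care is confirming that differentiating with respect to $\tilde y^f_n$ rather than $y^f_n$ removes the $\sqrt{\mu_{f,n}}$ factor, which is what makes the bound depend on $\|B_{f,n}\|_2$ without an extra $\sqrt{\mu_{f,n}}$.
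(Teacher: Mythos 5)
Your proposal is correct and follows essentially the same route as the paper: the derivative-of-inverse identity $\partial_{\tilde y^f_n}\partial F^{-1} = -\partial F^{-1}(\partial_{\tilde y^f_n}\partial F)\partial F^{-1}$, the observation that $\partial_{\tilde y^f_n}\partial F = B_{f,n}$ (with the $\sqrt{\mu_{f,n}}$ cancelling under the rescaled variable), and the bound $\sigma_{max}(\partial F^{-1}) \leq \F_{min}^{-1}$ from Assumption \ref{setup:Assumption1}. Your explicit remark on why the rescaling removes the $\sqrt{\mu_{f,n}}$ factor is a detail the paper leaves implicit, but the argument is the same.
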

}
\begin{proof} 
From matrix calculus we have
\[
\partial_{\tilde y^{f}_{n}}
 \partial F^{-1}(\by) = - \partial
F^{-1}(\by)\left( \partial_{\tilde y^{f}_{n}}
\partial F(\by)  \right)
\partial F^{-1}(\by)
\]
\noindent and also
\corb{
\[
\sigma_{max}\left( \partial_{\tilde y^{f}_{n}} \partial
F(\by) \right) \leq 
\sigma_{max}(B_{f,n}(x)).
\]
}
\noindent From Assumption \ref{setup:Assumption1} the result follows.
\end{proof}
\corb{\begin{lemma} For all $\by \in \Gamma$
\[
\sup_{x \in U} |\partial_{\tilde y^{f}_{n}} det(\partial F(\by))|
\leq \sup_{x \in U} 
\F_{max}^{d}\F^{-1}_{min}\|B_{f,n}(x)\|_{2}d
\]
\label{errorbounds:lemma2}
\end{lemma}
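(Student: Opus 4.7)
The plan is to invoke Jacobi's formula, exactly as was done in the proof of Theorem \ref{analyticity:theorem2}, to rewrite the derivative of the determinant as a product of the determinant itself and a trace that involves $\partial F^{-1}$ times a derivative of $\partial F$. Concretely, I would write
\[
\partial_{\tilde y^{f}_{n}} \det(\partial F(\by))
= \det(\partial F(\by)) \, \operatorname{tr}\!\left(\partial F(\by)^{-1} \, \partial_{\tilde y^{f}_{n}} \partial F(\by)\right).
\]
The point of working with the rescaled variable $\tilde y^{f}_n = \sqrt{\mu_{f,n}}\, y^{f}_n$ is that the $\sqrt{\mu_{f,n}}$ in the expansion \eqref{analyticity:eqn1} is absorbed, so $\partial_{\tilde y^{f}_{n}} \partial F(\by) = B_{f,n}(x)$, with no residual factor of $\sqrt{\mu_{f,n}}$.

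Next I would bound the two factors. For the determinant, Assumption \ref{setup:Assumption1} says that every singular value of $\partial F$ lies in $[\F_{min},\F_{max}]$, so $|\det(\partial F(\by))| \leq \F_{max}^{d}$ pointwise in $U$ and almost surely. For the trace, I would use the standard bound $|\operatorname{tr}(A)| \leq d\,\sigma_{max}(A)$ valid for any $d \times d$ matrix, together with submultiplicativity of the spectral norm, which gives
\[
\bigl|\operatorname{tr}(\partial F^{-1} B_{f,n})\bigr|
\leq d\,\sigma_{max}(\partial F^{-1})\,\|B_{f,n}(x)\|_{2}
= d\,\sigma_{min}(\partial F)^{-1}\,\|B_{f,n}(x)\|_{2}
\leq d\,\F_{min}^{-1}\,\|B_{f,n}(x)\|_{2},
\]
again using Assumption \ref{setup:Assumption1} at the last step.

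Combining the two pointwise bounds and taking the supremum over $x \in U$ yields the claimed estimate. There is no real obstacle here: the only subtlety worth flagging in the write-up is the rescaling, i.e.\ making clear that $\partial_{\tilde y^{f}_{n}} \partial F = B_{f,n}$ rather than $\sqrt{\mu_{f,n}}\,B_{f,n}$, since this is exactly what allows the explicit $\mu_{f,n}$-dependence to appear cleanly in the subsequent mean and variance error bounds (as motivated in Remark \ref{perturbation:remark}). Everything else is a direct application of Jacobi's formula together with the singular-value bounds already granted by the standing assumptions.
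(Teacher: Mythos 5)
Your proof is correct and follows essentially the same route as the paper: Jacobi's formula to express $\partial_{\tilde y^{f}_{n}}\det(\partial F)$ as $\det(\partial F)\operatorname{tr}(\partial F^{-1}B_{f,n})$ (the paper writes the trace as a sum of eigenvalues, which is the same thing), followed by the singular-value bounds $|\det(\partial F)|\leq \F_{max}^{d}$ and $|\operatorname{tr}(\partial F^{-1}B_{f,n})|\leq d\,\F_{min}^{-1}\|B_{f,n}(x)\|_{2}$. Your explicit remark about the rescaling $\tilde y^{f}_{n}=\sqrt{\mu_{f,n}}\,y^{f}_{n}$ absorbing the factor $\sqrt{\mu_{f,n}}$ is a useful clarification that the paper leaves implicit.
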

\begin{proof} 
Using Jacobi's formula we have that for all $\by \in \Gamma$
\[
\begin{split}
\partial_{\tilde y^{f}_{n}}
det(\partial F(\by))
& = tr(Adj(\partial F(\by)) 
\partial_{\tilde y^{f}_{n}}
\partial F(\by)   ) \\
& = 
det(\partial F(\by))
\sum_{i=1}^{d}  
\lambda_{i}(
\partial F(\by)^{-1} B_{f,n}(x))
),
\end{split}
\]
where $\lambda_{i}(\cdot)$ are the eigenvalues.
\end{proof}}
\corb{\begin{lemma} For all $n,m = 1, \dots, N_{f}$ and for all $\by \in \Gamma$
\[
\sup_{x \in U} \sigma_{max} \left(
\partial_{\tilde y^{f}_{n}} \partial_{\tilde y^{f}_{m}}
\partial F^{-1}(\by)
\right) 
\leq \sup_{x \in U}
2 \F_{min}^{-3}
\|B_{f,n}(x)\|_2 \|B_{f,m}(x)\|_2.
\]
\label{errorbounds:lemma3}
\end{lemma}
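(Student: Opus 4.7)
The plan is to iterate the matrix-calculus identity already used in the proof of Lemma \ref{errorbounds:lemma1}, exploit the fact that $\partial F(\by)$ is affine in $\by$ (so its second derivatives vanish), and then bound the result by submultiplicativity of the spectral norm together with Assumption \ref{setup:Assumption1} and Lemma \ref{errorbounds:lemma1}.

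First I would start from
\[
\partial_{\tilde y^{f}_{n}} \partial F^{-1}(\by)
= - \partial F^{-1}(\by)\,\bigl(\partial_{\tilde y^{f}_{n}} \partial F(\by)\bigr)\, \partial F^{-1}(\by),
\]
which was the key identity in Lemma \ref{errorbounds:lemma1}. Recall from \eqref{analyticity:eqn1} that $\partial F(\by) = I + \sum_{l=1}^N \sqrt{\mu_l}\,B_l(x)\,y_l$, so after the change of variables $\tilde y_l = \sqrt{\mu_l}\,y_l$ one has $\partial_{\tilde y^f_n}\partial F(\by) = B_{f,n}(x)$, which is independent of $\by$. Hence $\partial_{\tilde y^f_m}\partial_{\tilde y^f_n}\partial F(\by)=0$.

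Next I would apply $\partial_{\tilde y^{f}_{m}}$ to both sides and use the product rule. The middle factor contributes nothing because of the previous observation, so only two terms survive:
\[
\partial_{\tilde y^{f}_{m}}\partial_{\tilde y^{f}_{n}} \partial F^{-1}(\by)
= -\bigl(\partial_{\tilde y^f_m}\partial F^{-1}(\by)\bigr)\,B_{f,n}(x)\,\partial F^{-1}(\by)
  -\partial F^{-1}(\by)\,B_{f,n}(x)\,\bigl(\partial_{\tilde y^f_m}\partial F^{-1}(\by)\bigr).
\]
Then by submultiplicativity of $\sigma_{\max}$, each of these two terms is bounded pointwise by the product of the spectral norms of its three factors. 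From Assumption \ref{setup:Assumption1} we get $\sigma_{\max}(\partial F^{-1}(\by))\leq \F_{\min}^{-1}$, while Lemma \ref{errorbounds:lemma1} yields $\sigma_{\max}(\partial_{\tilde y^f_m}\partial F^{-1}(\by))\leq \|B_{f,m}(x)\|_2\,\F_{\min}^{-2}$; the middle factor contributes $\|B_{f,n}(x)\|_2$. Combining these three estimates and summing over the two terms gives exactly the claimed $2\F_{\min}^{-3}\|B_{f,n}(x)\|_2\|B_{f,m}(x)\|_2$ bound, after taking the supremum in $x\in U$.

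There is no real obstacle: the argument is mechanical once one notices that the affine dependence of $\partial F$ on $\by$ kills the second-derivative term in the product rule. The only bit that needs care is to verify that, under the rescaling $\tilde y^f_n=\sqrt{\mu_{f,n}}\,y^f_n$, no extra $\sqrt{\mu_{f,n}}$ factors appear, so that the final bound depends on the $B_{f,n}$ alone and not on the decay coefficients $\mu_{f,n}$ — this is exactly the structure one wants for the subsequent error analysis, since the explicit $\mu_{f,n}$ dependence should come out through $\delta \tilde y^f_n = \sqrt{\mu_{f,n}}\,\delta y^f_n$ in the quadratic remainder \eqref{errorbounds:residual} rather than from the derivatives themselves.
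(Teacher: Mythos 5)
Your proof is correct and follows essentially the same route as the paper: differentiate the identity $\partial_{\tilde y^f_n}\partial F^{-1}=-\partial F^{-1}(\partial_{\tilde y^f_n}\partial F)\partial F^{-1}$ once more, use that $\partial F$ is affine in $\by$ so the mixed second derivative of $\partial F$ vanishes, and bound the two surviving terms by submultiplicativity together with Assumption \ref{setup:Assumption1}. The only cosmetic difference is that you invoke Lemma \ref{errorbounds:lemma1} to bound the factor $\partial_{\tilde y^f_m}\partial F^{-1}$ directly, whereas the paper substitutes its explicit expression to get a fully expanded five-factor product before estimating; both give the same constant $2\F_{min}^{-3}\|B_{f,n}(x)\|_2\|B_{f,m}(x)\|_2$.
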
}
\begin{proof} Using matrix calculus identities we have that
\[
\begin{split}
\partial F^{-1}(\by)
&= 
\partial F^{-1}(\by) 
\left[
\left(
\partial_{\tilde y^{f}_{m}}
\partial F(\by) \right)
\partial F^{-1}(\by)
 \left(
 \partial_{\tilde y^{f}_{n}}
 \partial F(\by) \right) \right. \\
 &
 + 
 \left.
 \left( 
 \partial_{\tilde y^{f}_{n}} \partial
 F(\by)  \right) 
 \partial F^{-1}(\by)  \left(
 \partial_{\tilde y^{f}_{m}}
 \partial F(\by)  \right) 
\right] \partial F^{-1}(\by). 
\end{split}
\]
Taking the triangular and multiplicative inequality, and following the
same approach as Lemma \ref{errorbounds:lemma1} we obtain the desired
result.
\end{proof}
\corb{\begin{lemma} For $n,m = 1,\dots,N_f$ for all $\by \in \Gamma$
\[
\sup_{x \in U}
\sigma_{max} \left( 
\partial_{\tilde y^{f}_{m}} \partial_{\tilde y^{f}_{n}}  | \partial F(\by)|
  \right)
\leq 
\sup_{x \in U} 
d(d+1) \F^{d}_{max}\F^{-2}_{min}\|B_{f,n}(x)\|_2\|B_{f,m}(x)\|_2.
\]
\label{errorbounds:lemma4}
\end{lemma}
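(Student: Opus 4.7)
The plan is to proceed exactly as in the proof of Lemma \ref{errorbounds:lemma2} but applying Jacobi's formula twice. From that lemma we already know
\[
\partial_{\tilde y^{f}_{n}} |\partial F(\by)| \;=\; |\partial F(\by)|\,\operatorname{tr}\!\bigl(\partial F(\by)^{-1} B_{f,n}(x)\bigr),
\]
so the first step is to apply $\partial_{\tilde y^{f}_{m}}$ to this identity using the product rule. This yields two contributions:
\[
\partial_{\tilde y^{f}_{m}}\partial_{\tilde y^{f}_{n}} |\partial F(\by)|
= \bigl[\partial_{\tilde y^{f}_{m}} |\partial F(\by)|\bigr]\operatorname{tr}\!\bigl(\partial F(\by)^{-1}B_{f,n}\bigr)
+ |\partial F(\by)|\,\operatorname{tr}\!\bigl([\partial_{\tilde y^{f}_{m}}\partial F(\by)^{-1}]\,B_{f,n}\bigr),
\]
where I have used that $\partial_{\tilde y^{f}_{m}} B_{f,n}(x) = 0$ (since $B_{f,n}$ depends only on $x$).

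Next I would bound each of the two terms separately. For the first term I re-apply Lemma \ref{errorbounds:lemma2} to the factor $\partial_{\tilde y^{f}_{m}} |\partial F|$, and use the elementary inequality $|\operatorname{tr}(AB)| \leq d\,\sigma_{\max}(A)\sigma_{\max}(B)$ combined with $\sigma_{\max}(\partial F^{-1}) \leq \F_{\min}^{-1}$ (Assumption \ref{setup:Assumption1}) to bound the trace factor. This gives a contribution of size $d^{2}\F_{\max}^{d}\F_{\min}^{-2}\|B_{f,n}\|_{2}\|B_{f,m}\|_{2}$. For the second term I use $||\partial F(\by)|| \leq \F_{\max}^{d}$ together with Lemma \ref{errorbounds:lemma1}, which says $\sigma_{\max}(\partial_{\tilde y^{f}_{m}}\partial F^{-1}) \leq \F_{\min}^{-2}\|B_{f,m}\|_{2}$, and again $|\operatorname{tr}(\cdot)| \leq d\,\sigma_{\max}(\cdot)\sigma_{\max}(\cdot)$, producing a contribution of size $d\,\F_{\max}^{d}\F_{\min}^{-2}\|B_{f,n}\|_{2}\|B_{f,m}\|_{2}$.

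Summing the two contributions gives the factor $d^{2}+d = d(d+1)$, which matches the claimed constant. Finally, since the quantity $\partial_{\tilde y^{f}_{m}}\partial_{\tilde y^{f}_{n}} |\partial F(\by)|$ is scalar-valued, its $\sigma_{\max}$ is just its absolute value, and taking $\sup_{x\in U}$ completes the proof. I do not expect any serious obstacle here; the only thing requiring a modicum of care is the bookkeeping of the trace bounds and making sure that the constant from Lemma \ref{errorbounds:lemma2} (which already contains a factor $d$) is not double-counted when it is invoked inside the first contribution.
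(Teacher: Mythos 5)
Your proposal is correct and follows essentially the same route as the paper: the paper likewise differentiates the Jacobi-formula identity $\partial_{\tilde y^{f}_{n}}\det(\partial F)=\det(\partial F)\,\operatorname{tr}(\partial F^{-1}B_{f,n})$ by the product rule, obtaining the two terms $\det(\partial F)\operatorname{tr}(\partial F^{-1}B_{f,m})\operatorname{tr}(\partial F^{-1}B_{f,n})-\det(\partial F)\operatorname{tr}(\partial F^{-1}B_{f,m}\partial F^{-1}B_{f,n})$, which it then bounds exactly as you do to get $d^{2}+d=d(d+1)$. Your bookkeeping of the trace bounds and of the factor $d$ already present in Lemma \ref{errorbounds:lemma2} is consistent with the stated constant.
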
}
\begin{proof}
Using Jacobi's formula we have
\[
\begin{split}
\partial_{\tilde y^{f}_{m}} 
\partial_{\tilde y^{f}_{n}}
& = 
\partial_{\tilde y^{f}_{m}} (
det(\partial F(\by))
tr( 
\partial F(\by)^{-1} B_{f,n}(x)))
\sqrt{\mu_{f,n}}
\\
& = 
\partial_{\tilde y^{f}_{m}} 
det(\partial F(\by))
tr( 
\partial F(\by)^{-1} B_{f,n}(x)) \\
&+ 
det(\partial F(\by))
tr(\partial_{\tilde y^{f}_{m}} 
\partial F(\by)^{-1} B_{f,n}(x)
) \\
& =
det(\partial F(\by))
tr(
\partial F(\by)^{-1} 
B_{f,m}(x))
tr(
\partial F(\by)^{-1} 
B_{f,n}(x)
) \\
& -
det(\partial F(\by))
tr(
\partial F(\by)^{-1} 
B_{f,m}(x)
\partial F(\by)^{-1} 
B_{f,n}(x) 
).
\end{split}
\]
The result follows.
\end{proof}
\begin{lemma}
  \corb{For all $v,w \in H^{1}_{0}(U)$, $\theta \in (0,1)$,
    $\bys \in \Gamma_s$ and $\byd_f \in
    \Gamma_f$ we have that}
\[ 
\begin{split}
&| \int_{U} 
(a \circ F)(\cdot,\bys,\bo)
(\nabla v)^{T}
  \partial_{\tilde y^{f}_{n}} G(\bys + \theta \delta \tilde \by_f )
\nabla w | \leq
\| v \|_{H^{1}_{0}(U)} 
\| w \|_{H^{1}_{0}(U)} 
a_{max} \\
&\mcB(d,\F_{min}, \F_{max},B_{f,n}),
\end{split}
\]
where
\[
\mcB(d,\F_{min}, \F_{max},B_{f,n}) 
:= 
\sup_{x \in U}
(d +
2)\F^{d}_{max}\F^{-3}_{min}\|B_{f,n}(x)\|_2.
\]
\label{errorbounds:lemma5}
\end{lemma}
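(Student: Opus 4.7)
The strategy is to expand $\partial_{\tilde y^f_n} G$ by the product rule, bound each resulting matrix factor in spectral norm using Assumption \ref{setup:Assumption1} together with Lemmas \ref{errorbounds:lemma1}--\ref{errorbounds:lemma2}, and close with a pointwise Cauchy--Schwarz followed by Cauchy--Schwarz in $L^2(U)$.

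By Assumption \ref{analyticity:assumption1}(1), $a \circ F$ is independent of $\by$, so I write $G(\by) = (a \circ F)\, \partial F^{-1}(\by)\, \partial F^{-T}(\by)\, |\partial F(\by)|$ and differentiate, obtaining three surviving terms: one where $\partial F^{-1}$ is differentiated, one where $\partial F^{-T}$ is differentiated, and one where $|\partial F|$ is differentiated. I then bound each of these pieces in spectral norm at the intermediate parameter $\by = (\bys, \theta \byd_f)$, which belongs to $\Gamma$ for every $\theta \in (0,1)$, so that Assumption \ref{setup:Assumption1} applies there. For the first two terms, Lemma \ref{errorbounds:lemma1} gives $\sigma_{max}(\partial_{\tilde y^f_n} \partial F^{-1}) \le \F_{min}^{-2} \sup_{x \in U} \|B_{f,n}(x)\|_2$, while Assumption \ref{setup:Assumption1} gives $\sigma_{max}(\partial F^{-T}) \le \F_{min}^{-1}$ and $|\partial F| \le \F_{max}^d$; each such term is therefore at most $\F_{max}^d \F_{min}^{-3} \sup_{x \in U} \|B_{f,n}(x)\|_2$. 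For the third term I combine $\sigma_{max}(\partial F^{-1} \partial F^{-T}) \le \F_{min}^{-2}$ with Lemma \ref{errorbounds:lemma2}, yielding a bound of $d\, \F_{max}^d \F_{min}^{-3} \sup_{x \in U} \|B_{f,n}(x)\|_2$. Summing the three contributions gives exactly $(d+2)\, \F_{max}^d \F_{min}^{-3} \sup_{x \in U} \|B_{f,n}(x)\|_2 = \mcB(d,\F_{min},\F_{max},B_{f,n})$.

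To finish, I use $\|a \circ F\|_{L^\infty(U)} \le a_{max}$ from Assumption \ref{setup:Assumption2}, apply the pointwise estimate $|(\nabla v)^T M \nabla w| \le \sigma_{max}(M)\, |\nabla v|\, |\nabla w|$ with $M$ equal to the matrix bounded above, and integrate over $U$ with a final Cauchy--Schwarz in $L^2(U)$; the result is $a_{max}\, \mcB\, \|\nabla v\|_{L^2(U)} \|\nabla w\|_{L^2(U)}$, which is in turn absorbed into $a_{max}\, \mcB\, \|v\|_{H^1_0(U)} \|w\|_{H^1_0(U)}$. The argument is entirely mechanical; the only care needed is the product-rule bookkeeping together with the observation that the intermediate argument $(\bys, \theta \byd_f)$ stays in $\Gamma$ so that the uniform bounds of Assumption \ref{setup:Assumption1} continue to hold. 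There is no conceptual obstacle.
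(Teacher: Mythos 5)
Your proposal is correct and follows essentially the same route as the paper: the same three-term product-rule expansion of $\partial_{\tilde y^{f}_{n}} G$, the same use of Lemmas \ref{errorbounds:lemma1} and \ref{errorbounds:lemma2} together with Assumption \ref{setup:Assumption1} to bound each factor, and the same $(1+1+d)$ bookkeeping yielding the constant $(d+2)\F^{d}_{max}\F^{-3}_{min}\sup_{x\in U}\|B_{f,n}(x)\|_2$. The paper's proof is terser (it omits the closing Cauchy--Schwarz step that you spell out), but there is no substantive difference.
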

\begin{proof} \corb{First we expand the partial derivative of $G(\by)$
  with respect to $\tilde y^{f}_{n}$:}
\[
\begin{split}
  \partial_{\tilde y^{f}_{n}} 
G(\by)
& =   \partial_{\tilde y^{f}_{n}} \partial F^{-T}(\by)
 \partial F^{-1}(\by) |\partial
F(\by)| +
\partial F^{-T}(\by) 
  \partial_{\tilde y^{f}_{n}} 
\partial
F^{-1}(\by) |\partial F(\by)| \\
& + \partial F^{-T}(\by) \partial F^{-1}(\by) 
  \partial_{\tilde y^{f}_{n}} 
|\partial F(\by)|,
\end{split}
\]
From Lemmas \ref{errorbounds:lemma1}, \ref{errorbounds:lemma2}
and the triangular inequality we have that 
\[
\begin{split}
\sup_{x \in U, \by \in \Gamma} \sigma_{max} \left(
\partial_{\tilde y^f_n}
 G(\by)
\right) & \leq (d + 2)\F^{d}_{max}\F^{-3}_{min}\|B_{f,n}(x)\|_2.
\end{split}
\]
\end{proof}
\begin{lemma}
\corb{For all $v,w \in H^{1}_{0}(U)$, $\theta \in (0,1)$ and $\byd_f
  \in \Gamma_f$ we have that}
\[
\left| \int_{U} (a \circ F)(\cdot,\bys) (\nabla w)^{T} 
\partial_{\tilde y^{f}_{n}}  \partial_{\tilde y^{f}_{m}} 
G (\bys + \theta   \delta \tilde \by )
\nabla v \right|
\]
is less or equal to 
\[
\| v \|_{H^{1}_{0}(U)} \| w \|_{H^{1}_{0}(U)} a_{max} 
2(d + 3)\F_{min}^{-4} \F_{max}^d \|B_{f,n}(x)\|_2 \|B_{f,m}(x)\|_2.
\]
\label{errorbounds:lemma6}
\end{lemma}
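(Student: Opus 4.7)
The plan is to mirror the proof of Lemma \ref{errorbounds:lemma5}, but with one extra layer of differentiation applied to the matrix-valued function
\[
G(\by) = \partial F^{-T}(\by)\,\partial F^{-1}(\by)\,|\partial F(\by)|.
\]
After applying Cauchy--Schwarz to the integrand and using Assumption \ref{setup:Assumption2} to pull out $a_{\max}$, the task reduces to bounding $\sigma_{\max}\!\bigl(\partial_{\tilde y^f_n}\partial_{\tilde y^f_m} G(\by)\bigr)$ uniformly in $x\in U$ and $\by\in\Gamma$, and then multiplying by $\|v\|_{H^1_0(U)}\|w\|_{H^1_0(U)}$.

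First, I would expand the mixed second derivative of $G$ by the Leibniz rule applied to its three factors, $\partial F^{-T}$, $\partial F^{-1}$, and $|\partial F|$. This yields nine terms: the three ``diagonal'' terms where both derivatives fall on the same factor, namely
\[
(\partial_n\partial_m \partial F^{-T})\,\partial F^{-1}\,|\partial F|,\qquad
\partial F^{-T}\,(\partial_n\partial_m \partial F^{-1})\,|\partial F|,\qquad
\partial F^{-T}\,\partial F^{-1}\,(\partial_n\partial_m |\partial F|),
\]
and six ``cross'' terms where the two derivatives fall on different factors. Each factor (undifferentiated, once-differentiated, or twice-differentiated) admits a uniform bound already established earlier in the section: $\sigma_{\max}(\partial F^{-1})\le \F_{\min}^{-1}$ and $|\partial F|\le \F_{\max}^d$ from Assumption \ref{setup:Assumption1}; the first-derivative bounds $\sigma_{\max}(\partial_{\tilde y^f_n}\partial F^{-1})\le \F_{\min}^{-2}\|B_{f,n}\|_2$ and $|\partial_{\tilde y^f_n}|\partial F||\le d\F_{\min}^{-1}\F_{\max}^d\|B_{f,n}\|_2$ from Lemmas \ref{errorbounds:lemma1} and \ref{errorbounds:lemma2}; and the second-derivative bounds from Lemmas \ref{errorbounds:lemma3} and \ref{errorbounds:lemma4}.

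Next, I would apply the triangle inequality together with the submultiplicativity of the spectral norm to each of the nine terms. Every term factors into a product of a triple of these quantities, and in every case the product telescopes to a common prefactor $\F_{\min}^{-4}\F_{\max}^{d}\|B_{f,n}(x)\|_2\|B_{f,m}(x)\|_2$, multiplied by a dimensionless constant that depends only on $d$ (through $d$ or $d(d+1)$, originating from Lemmas \ref{errorbounds:lemma2} and \ref{errorbounds:lemma4}). Summing the nine dimensionless constants and taking the supremum over $x\in U$ gives a bound of the form
\[
\sup_{x\in U,\;\by\in\Gamma}\sigma_{\max}\!\bigl(\partial_{\tilde y^f_n}\partial_{\tilde y^f_m} G(\by)\bigr)
\le C(d)\,\F_{\min}^{-4}\F_{\max}^{d}\|B_{f,n}(x)\|_2\|B_{f,m}(x)\|_2,
\]
from which the claimed inequality follows after multiplying by $a_{\max}\|v\|_{H^1_0(U)}\|w\|_{H^1_0(U)}$.

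The only real obstacle is the bookkeeping on the nine-term sum and making sure the arithmetic of the constants collapses to the stated coefficient $2(d+3)$; the derivative identities themselves are routine matrix calculus already exploited in the three preceding lemmas. There is no new analytic content beyond what Lemmas \ref{errorbounds:lemma1}--\ref{errorbounds:lemma4} provide, so the proof is essentially a careful bookkeeping exercise that parallels Lemma \ref{errorbounds:lemma5}.
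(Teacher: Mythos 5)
Your proposal follows exactly the paper's proof: expand $\partial_{\tilde y^f_m}\partial_{\tilde y^f_n}G$ by the Leibniz rule on the three factors $\partial F^{-T}$, $\partial F^{-1}$, $|\partial F|$ into nine terms, bound each factor uniformly via Assumption \ref{setup:Assumption1} and Lemmas \ref{errorbounds:lemma1}--\ref{errorbounds:lemma4}, and sum. Your caution about the constant is warranted --- the paper's own proof actually arrives at $(7+3d+2\F_{min}^{-1}\F_{max}^{-d})$ rather than the $2(d+3)$ in the lemma statement, so the discrepancy you flag is present in the source itself --- but the structure of the argument is identical and sound.
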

\begin{proof}
\corb{Using matrix calculus identities we have that for all $ \by \in \Gamma$}
\[
\begin{split}
\partial_{\tilde y^{f}_{m}}  \partial_{\tilde y^{f}_{n}} G (\by) 
& =   \partial_{\tilde y^{f}_{m}} \partial_{\tilde y^{f}_{n}} \partial F^{-T}(\by)
 \partial F^{-1}(\by) |\partial
F(\by)| \\
& + \partial_{\tilde y^{f}_{n}}
\partial F^{-T}(\by) 
\partial_{\tilde y^{f}_{m}} 
\partial
F^{-1}(\by) |\partial F(\by)| \\
& + \partial_{\tilde y^{f}_{n}} \partial F^{-T}(\by) \partial F^{-1}(\by) 
  \partial_{\tilde y^{f}_{m}} 
|\partial F(\by)| \\
& +   
\partial_{\tilde y^{f}_{m}}
\partial F^{-T}(\by) 
\partial_{\tilde y^{f}_{n}} 
\partial
F^{-1}(\by) |\partial F(\by)| \\
&+ 
\partial F^{-T}(\by) \partial_{\tilde y^{f}_{m}} \partial_{\tilde y^{f}_{n}} 
 \partial F^{-1}(\by) |\partial
F(\by)| 
\\
& + 
\partial F^{-T}(\by) \partial_{\tilde y^{f}_{n}} \partial F^{-1}(\by) 
  \partial_{\tilde y^{f}_{m}} 
|\partial F(\by)| \\
& +   
\partial_{\tilde y^{f}_{m}}
\partial F^{-T}(\by) 
\partial
F^{-1}(\by) \partial_{\tilde y^{f}_{n}} |\partial F(\by)| \\ 
&+ 
\partial F^{-T}(\by) \partial_{\tilde y^{f}_{m}} 
 \partial F^{-1}(\by) 
\partial_{\tilde y^{f}_{n}} 
|\partial
F(\by)| 
\\
& + 
\partial F^{-T}(\by) 
\partial F^{-1}(\by) 
  \partial_{\tilde y^{f}_{m}} 
\partial_{\tilde y^{f}_{n}}  
|\partial F(\by)|.\\
\end{split}
\]
\corb{From Lemmas \ref{errorbounds:lemma1}, \ref{errorbounds:lemma2},
  \ref{errorbounds:lemma3}, \ref{errorbounds:lemma4}, and the
  triangular inequality we have that for all $ \by \in \Gamma$}
\[
\begin{split}
\| \partial_{\tilde y^{f}_{m}}  \partial_{\tilde y^{f}_{n}} G(\by) \|_2 
\leq & \sup_{x \in U}
(7 + 3d + 2 \F_{min}^{-1}\F_{max}^{-d})
\F_{min}^{-4} \F_{max}^d \|B_{f,n}(x)\|_2 \|B_{f,m}(x)\|_2 
\end{split}
\]
\end{proof}

\begin{assumption}
\corb{For all $\by \in \Gamma$ we assume that $(f \circ F)(\cdot,\by)
  \in H^2(U)$.}
\label{errorbounds:assumption1}
\end{assumption}
\corb{\begin{lemma} For all $\bys \in \Gamma_s$ and $\byd_f \in \Gamma$ we 
    have that:\\
\noindent (a)
\[
\begin{split} 
&\| \nabla D_{\byf} \tilde{u}(\cdot,\bys,\bo)(\byd_f)\|_{L^2(U)} 
\leq
\frac{\sup_{x \in U} 
\sum_{n = 1}^{N_f} \sqrt{\mu_{f,n}}}{a_{min}\F^d_{min}\F^{-2}_{max}}
\\
&\Big( (\|  (\tilde{u} \circ F)(\cdot,\bys,\bo) \|_{H^{1}_{0}(U)} 
+
\| \hat \bw \|_{H^{1}(U)}) 
a_{max}
(d +
2)\F^{d}_{max}\F^{-3}_{min} \|B_{f,n}(x)\|_2 \\
&+ 
\F_{max}^d \F_{min}^{-1}
\|\hat{v}\|_{[L^{\infty}(U)]^d}  \|b_{f,n}\|_{L^{\infty}(U)}
C_P(U)\sqrt{d} 
\|  (f \circ F)(\cdot,\bys,\bo) \|_{H^{1}(U)}\\
&+ 
d C_P(U)\|  (f \circ F)(\cdot,\bys,\bo) \|_{L^{2}(U)} \F_{max}^d 
\F^{-1}_{min} \|B_{f,n}(x)\|_2 \\
\end{split}
\]
\bigskip
\noindent (b)
\[
\begin{split} 
&\| \nabla D_{\byf} \varphi(\bys,\bo)(\byd_f)\|_{L^2(U)} \\
&\leq
\frac{\sup_{x \in U} 
\sum_{n = 1}^{N_f} \sqrt{\mu_{f,n}}
\|  \varphi(\bys,\bo) \|_{H^{1}_{0}(U)}
a_{max}
(d +
2) \|B_{f,n}\|_2
}
{a_{min}\F^d_{min}\F^{-2}_{max}\F^{-d}_{max}\F^{3}_{min} }.
\end{split}
\]
\label{errorbounds:lemma7}
\end{lemma}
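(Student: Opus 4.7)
The plan is to derive both bounds by testing the Gateaux derivative identities of Lemmas \ref{perturbation:lemma1} and \ref{perturbation:lemma2} against the derivatives themselves and invoking coercivity of $B(\by;\cdot,\cdot)$. Under Assumptions \ref{setup:Assumption1} and \ref{setup:Assumption2}, for every $\by \in \Gamma$ and every $v \in H^1_0(U)$, the estimates $\sigma_{\min}(\partial F^{-1}\partial F^{-T}) \geq \F_{\max}^{-2}$, $|\partial F| \geq \F_{\min}^d$, and $a \geq a_{\min}$ combine to give $B(\by; v, v) \geq a_{\min}\F_{\min}^d \F_{\max}^{-2}\|\nabla v\|_{L^2(U)}^2$. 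This coercivity constant is exactly the denominator $a_{\min}\F_{\min}^d\F_{\max}^{-2}$ that appears in both bounds.

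For part (a), I would apply Lemma \ref{perturbation:lemma1} at the point $(\bys,\bo)$ with perturbation direction $\byd_f$, test with $v = D_{\byf}\tilde u(\cdot,\bys,\bo)(\byd_f)$ (which lies in $H^1_0(U)$ by Remark \ref{perturbation:remark3}), and rewrite each summand on the right using $\tilde y^f_n = \sqrt{\mu_{f,n}}\, y^f_n$ so that an explicit $\sqrt{\mu_{f,n}}$ factor appears (cf.\ Remark \ref{perturbation:remark}). The two terms pairing $\partial_{\tilde y^f_n}G(\by)$ with $\nabla\tilde u$ and with $\nabla\hat\bw$, together with any $\partial_{\tilde y^f_n}\hat\bw$ contribution, are bounded by Lemma \ref{errorbounds:lemma5}, which supplies the factor $a_{\max}(d+2)\F^d_{\max}\F^{-3}_{\min}\|B_{f,n}\|_2$ acting on $\|\tilde u\circ F\|_{H^1_0}+\|\hat\bw\|_{H^1}$. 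The source term $\int_U \partial_{\tilde y^f_n}(f\circ F)\,|\partial F|\,v$ demands the chain rule $\partial_{\tilde y^f_n}(f\circ F) = (\nabla_z f\circ F)^T b_{f,n}\hat v$ coming from Assumption \ref{setup:Assumption3}, followed by the identity $\nabla_z f\circ F = \partial F^{-T}\nabla(f\circ F)$ which supplies the $\F^{-1}_{\min}$, the componentwise estimate $|b_{f,n}\hat v|_2\leq\sqrt d\,\|b_{f,n}\|_{L^\infty(U)}\|\hat v\|_{[L^\infty(U)]^d}$ which supplies the $\sqrt d$, the Jacobian bound $|\partial F|\leq \F^d_{\max}$, and the Poincaré inequality $\|v\|_{L^2}\leq C_P(U)\|\nabla v\|_{L^2}$. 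Finally, $\int_U (f\circ F)\,\partial_{\tilde y^f_n}|\partial F|\,v$ is controlled by Lemma \ref{errorbounds:lemma2} combined with $\|f\circ F\|_{L^2}$ and Poincaré. Dividing through by the coercivity constant, taking $\sup_{x\in U}$ in every $x$-dependent factor, and summing over $n=1,\dots,N_f$ delivers the three summands in~(a).

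For part (b) the argument is a simpler instance of the same template: I would test the identity of Lemma \ref{perturbation:lemma2} at $(\bys,\bo)$ against $v = D_{\byf}\varphi(\bys,\bo)(\byd_f)$, so that the left-hand side is $B(\by;v,v)$ bounded below by coercivity, and the only term on the right is $\sum_n \delta\tilde y^f_n\int_U (\nabla v)^T\partial_{\tilde y^f_n}G(\by)\nabla\varphi(\by)$, estimated via Lemma \ref{errorbounds:lemma5} by $a_{\max}(d+2)\F^d_{\max}\F^{-3}_{\min}\|B_{f,n}\|_2\|v\|_{H^1_0}\|\varphi\|_{H^1_0}$. Summing in $n$ with the weights $\sqrt{\mu_{f,n}}$ and dividing by the coercivity constant yields the stated bound; the odd-looking denominator $a_{\min}\F^d_{\min}\F^{-2}_{\max}\F^{-d}_{\max}\F^3_{\min}$ is simply the coercivity constant $a_{\min}\F^d_{\min}\F^{-2}_{\max}$ after moving the $\F^d_{\max}\F^{-3}_{\min}$ of Lemma \ref{errorbounds:lemma5} into the denominator.

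The main obstacle is the $\partial_{\tilde y^f_n}(f\circ F)$ contribution in part~(a), which requires careful bookkeeping of the chain rule and the conversion $\nabla_z f = \partial F^{-T}\nabla(f\circ F)$ to re-express everything in terms of $\|f\circ F\|_{H^1(U)}$; all the other pieces are routine applications of Lemmas \ref{errorbounds:lemma1}--\ref{errorbounds:lemma5}, coercivity, and Poincaré, with only the bookkeeping of $\sqrt{\mu_{f,n}}$ factors and the sup over $x\in U$ to perform.
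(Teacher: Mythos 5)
Your proposal follows essentially the same route as the paper's own proof: for (a) you test the identity of Lemma \ref{perturbation:lemma1} with $v = D_{\byf}\tilde u(\cdot,\bys,\bo)(\byd_f)$, use the coercivity constant $a_{\min}\F^d_{\min}\F^{-2}_{\max}$ on the left, and bound the right-hand side via Lemma \ref{errorbounds:lemma5}, the Sobolev chain rule identity $\nabla_z f = \partial F^{-T}\nabla(f\circ F)$ (which is exactly how the paper produces the $\F^{-1}_{\min}\sqrt d\,\|f\circ F\|_{H^1(U)}$ factor), Lemma \ref{errorbounds:lemma2}, and Poincar\'e; for (b) you test Lemma \ref{perturbation:lemma2} with $v = D_{\byf}\varphi$ exactly as the paper does. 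The decomposition of the source term and the accounting of the $\sqrt{\mu_{f,n}}$ weights and the denominator in (b) all match the paper's argument, so the proposal is correct and not a genuinely different approach.
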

\begin{proof}
  (a) From Lemma \ref{perturbation:lemma1}, Remark
  \ref{perturbation:remark3}
  \&  \ref{perturbation:remark}
  we have that for any $v \in
  H^{1}_0(U)$ and for all $\bys \in \Gamma_s$ and $\byd_f \in \Gamma$
\[
\begin{split} 
&a_{min}\F^d_{min}\F^{-2}_{max}
\| \nabla D_{\by_f} \tilde{u}(\cdot,\bys,\bo)(\byd_f)\|_{L^2(U)} 
\| \nabla v \|_{L^2(U)} \\
&\leq
\sum_{n = 1}^{N_f} \sqrt{\mu_{f,n}} \delta y^f_{n} \bigg| \int_{U}  
-\nabla (\tilde{u} \circ F)(\cdot,\bys,\bo)^{T} 
\partial_{\tilde y^f_{n}} G(\bys,\bo) 
  \nabla v \\
&+
  (\partial_{\tilde y^{f}_{n}} (f \circ F)(\cdot,\bys,\bo))
  |\partial F(\bys,\bo)| v + 
  (f \circ F)(\cdot,\bys,\bo) \partial_{\tilde y^{f}_{n}}
  |\partial F(\bys,\bo)| v \\
&- 
(\nabla \hat \bw(\cdot,\bys,\bo))^{T}  \partial_{\tilde y^{f}_{n}}
G(\bys,\bo)  \nabla v 
- (
\nabla \partial_{\tilde y^f_n}\hat \bw(\cdot,\bys,\bo))^{T}
G(\bys,\bo) \nabla v \bigg|.
\end{split}
\]
With the choice of $v = D_{\by_f} \tilde{u}(\cdot,\bys,\bo)(\byd_f)$
and from Lemma \ref{errorbounds:lemma5} we have that
\[
\begin{split} 
&\| \nabla D_{\byf} \tilde{u}(\cdot,\bys,\bo)(\bys,\byd)\|_{L^2(U)} 
\leq
\frac{1}{a_{min}\F^d_{min}\F^{-2}_{max}
\| \nabla v \|_{L^2(U)} } \bigg( \\
&\sum_{i = 1}^{N} \sqrt{\mu_{f,n}}
\delta y^f_{n}
\bigg| \int_{U}  
-(\nabla (\tilde{u} \circ F)(\cdot,\bys,\bo))^{T} \partial_{\tilde
  y^f_{n}} G(\bys,\bo) 
  \nabla v \\
&+
\partial_{\tilde y^{f}_{n}} (f \circ F)(\cdot,\bys,\bo) |\partial F(\bys,\bo)| v 
+ 
(f \circ F)(\cdot,\bys,\bo) \partial_{\tilde y^{f}_{n}} |\partial F(\bys,\bo)| v \\
&- 
(\nabla {\bf w}(\bys,\bo))^{T}  \partial_{\tilde y^{f}_{n}}G(\bys,\bo)  \nabla v 
- (\partial_{\tilde y^{f}_{n}}
\nabla {\bf w}(\bys,\bo))^{T} G(\bys,\bo) \nabla v \bigg|\bigg). 
\end{split}
\]
Now,
\[
\begin{split}
&\Big|
\int_U \partial_{\tilde y^{f}_{n}} (f \circ F)(\cdot,\bys,\bo) 
|\partial F(\bys,\bo)| v \Big| 
\leq \F_{max}^d
\int_U |\partial_{\tilde y^{f}_{n}} (f \circ F)(\cdot,\bys,\bo)v| \\
&\leq
\F_{max}^d C_{P}(U) \| \nabla v \|_{L^{2}(U)}
\| \sum_{l=1}^d |\partial_{F_l} f \partial_{\tilde y^{f}_{n}} F_l| \|_{L^{2}(U)}
\\
&\leq \F^d_{max} C_{P}(U) \| \nabla v \|_{L^2(U)}
\|\hat{v}\|_{[L^{\infty}(U)]^d} \|b_{f,n}\|_{L^{\infty}(U)} 
\\
&\| \nabla f \cdot {\bf 1} \|_{L^{2}(U)}.
\end{split}
\]
From the Sobolev chain rule (see Theorem 3.35 in \cite{Adams1975}) for
any $v \in H^{1}(D(\omega))$ we have that $\nabla v = \partial F^{-T}
\nabla (v \circ F)$, where $v \circ F \in H^{1}(U)$, thus
\[
\begin{split}
\| \nabla f \cdot {\bf 1} \|_{L^{2}(U)} 
&= 
\| {\bf 1}^{T} \partial F^{-T} \nabla (f \circ F)(\cdot,\bys,\bo)
 \|_{L^{2}(U)} \\
&\leq 
\F_{min}^{-1} \sqrt{d} 
\| \| \nabla (f \circ F)(\cdot,\bys,\bo) \|_{2} \|_{L^{2}(U)} \\
&\leq 
\F_{min}^{-1} \sqrt{d} 
\| (f \circ F)(\cdot,\bys,\bo) \|_{H^{1}(U)}.
\end{split}
\]
From Lemma \ref{errorbounds:lemma2} the result follows.\\
\\
\noindent (b) Apply Lemma \ref{perturbation:lemma2} with 
$v =
D_{\byf} \varphi(\bys,\bo)(\byd_f)$
and we get the result.
\end{proof}}
\corb{\begin{lemma}
For all $\by \in \Gamma$ and $n,m = 1,\dots,N$ we have that
\[
\begin{split}
&\| \partial \tilde y^{f}_{n} (f \circ F)(\cdot,\by) \|_{L^2(U)} \\
&\leq
  \F_{min}^{-1} \sqrt{d} \| b_{f,n} \|_{L^{\infty}(U)} \| \hat{v}
  \|_{[L^{\infty}(U)]^d} \| (f \circ F)(\cdot,\by) \|_{H^1(U)}.
\end{split}
\]
and
\[
\begin{split}
&\partial \tilde y^{f}_{n} \partial y^f_m (f \circ F)(\cdot,\by)
  \|_{L^2(U)} 
\leq 
d \| b_{f,n} \|_{L^{\infty}(U)} \| b_{f,m}
  \|_{L^{\infty}(U)} \| \hat{v} \|^2_{[W^{1,\infty}(U)]^d} \\
&(d
  \F^{-2}_{min}( d^{3/2} \| f \circ F \|_{H^{1}(U)} \F^{-2}_{min}(1 +
  4 \| \hat{v} \|_{[L^{\infty}(U)]^d} ( \sum_{i = 1}^{N_s}
  \sqrt{\mu}_{s,i} \| b_{s,i} \|_{W^{2,\infty}(U)} \\
&+ \sum_{i =
    1}^{N_f} \sqrt{\mu}_{f,i} \| 
b_{f,i} \|_{W^{2,\infty}(U)}))) + \|
  f \circ F \|_{H^{2}(U)}).
\end{split}
\]
\label{errorbounds:lemma10}
\end{lemma}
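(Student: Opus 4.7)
The plan is to compute both derivatives explicitly by the chain rule, exploiting the fact that $F(x,\by)=x+e(x,\by)\hat v(x)$ is affine in $\by$, and then to use the Sobolev chain rule $\nabla f = \partial F^{-T}\nabla (f\circ F)$ (as in the proof of Lemma \ref{errorbounds:lemma7}) to trade derivatives of $f$ evaluated at $F(x)$ for derivatives of $f\circ F$ in the reference variable $x$.

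For part (a), the key observation is $\partial_{\tilde y^f_n} F(x,\by) = b_{f,n}(x)\hat v(x)$ (the $\sqrt{\mu_{f,n}}$ cancels because $\tilde y^f_n = \sqrt{\mu_{f,n}}\,y^f_n$), hence $\partial_{\tilde y^f_n}(f\circ F)(x,\by) = (\nabla f)(F(x,\by))\cdot b_{f,n}(x)\hat v(x)$. Pulling out the $L^{\infty}$ norms of $b_{f,n}$ and $\hat v$ and applying $\nabla f = \partial F^{-T}\nabla(f\circ F)$ together with $\sigma_{\max}(\partial F^{-T})\leq \F_{\min}^{-1}$ (Assumption \ref{setup:Assumption1}) and a $\sqrt d$ factor to pass from componentwise to Euclidean norms yields the stated bound.

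For part (b), since $F$ is affine in $\by$, $\partial_{\tilde y^f_n}\partial_{y^f_m}F \equiv 0$, so differentiating once more only hits the gradient of $f$:
\[
\partial_{\tilde y^f_n}\partial_{y^f_m}(f\circ F)(x,\by) = \bigl[(\nabla^2 f)(F)\,b_{f,n}\hat v\bigr]\cdot b_{f,m}\hat v .
\]
The main task is then to convert $(\nabla^2 f)\circ F$ into quantities involving $f\circ F$. From $\nabla^2(f\circ F) = \partial F^{T}\bigl((\nabla^2 f)\circ F\bigr)\partial F + \sum_k \bigl((\partial_k f)\circ F\bigr)\nabla^2 F_k$ we solve
\[
(\nabla^2 f)\circ F = \partial F^{-T}\Bigl(\nabla^2(f\circ F) - \sum_{k=1}^d \bigl((\partial_k f)\circ F\bigr)\nabla^2 F_k\Bigr)\partial F^{-1},
\]
and treat the correction $(\partial_k f)\circ F$ with the Sobolev chain rule as in (a). For $\nabla^2 F_k$, expand $F_k = x_k + e\hat v_k$ with $e = \sum_{l=1}^N\sqrt{\mu_l}\,b_l Y_l$ to obtain terms of the form $\hat v_k\nabla^2 e$, $\nabla e\otimes\nabla\hat v_k$ (symmetrised), and $e\nabla^2\hat v_k$; taking $L^{\infty}$ norms produces the $\|\hat v\|_{[W^{1,\infty}(U)]^d}$ factor and the $\sum_i\sqrt{\mu_{s,i}}\|b_{s,i}\|_{W^{2,\infty}}+\sum_i\sqrt{\mu_{f,i}}\|b_{f,i}\|_{W^{2,\infty}}$ factor.

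Assembling these pieces by Cauchy–Schwarz and the triangle inequality, pulling out the operator norms of $\partial F^{-1}$ (each $\leq \F_{\min}^{-1}$), the $L^{\infty}$ norms of $b_{f,n}, b_{f,m}, \hat v$, and collecting the contributions from the $\nabla^2(f\circ F)$ piece (which gives the $\|f\circ F\|_{H^2(U)}$ term) and the $\nabla(f\circ F)$ piece (which gives the $\|f\circ F\|_{H^1(U)}$ term multiplied by the $W^{2,\infty}$ sums) produces the claimed estimate. The principal obstacle is purely bookkeeping: carefully enumerating the several matrix factors that arise from differentiating $\partial F^{-T}(\nabla^2(f\circ F))\partial F^{-1}$ and the chain-rule expansion of $\nabla^2 F_k$, and tracking how the powers of $\F_{\min}$ and $\F_{\max}$, the $d$-dependent combinatorial constants, and the $\sqrt{\mu}$-weighted $W^{2,\infty}$ norms combine into the single bound displayed in the statement.
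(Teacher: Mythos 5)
Your proposal follows essentially the same route as the paper's proof: the chain rule with $\partial_{\tilde y^f_n}F = b_{f,n}\hat v$ and the Sobolev identity $\nabla f = \partial F^{-T}\nabla(f\circ F)$ for the first bound, and for the second bound the Hessian chain rule $\partial^2(f\circ F)=\partial F\,\partial^2 f\,\partial F^T + \nabla f\star\mcH$ inverted to control $\mathbf{1}^T\partial^2 f\,\mathbf{1}$ by $\|f\circ F\|_{H^2(U)}$ plus a $\|f\circ F\|_{H^1(U)}$ correction weighted by the second derivatives of $F$, which are bounded via the $\sqrt{\mu}$-weighted $W^{2,\infty}$ norms of the $b_n$ and $\hat v$. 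The argument is correct and matches the paper's in all essentials.
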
}
\begin{proof}~
\bigskip
\corb{The first bound is immediate. Follow the proof in Lemma
\ref{errorbounds:lemma7} (a). Now, by applying the chain rule for
Sobolev spaces we obtain that for all $\by \in \Gamma$}
\begin{equation}
\begin{split}
\| \partial \tilde y^f_n \partial \tilde y^f_m (f \circ F)(\cdot,\by)
\|_{L^2(U)} 
&=
\| 
(
b_m b_n
( 
\sum_{i=1}^d \hat{v}_i
\sum_{j=1}^d \hat{v}_j
\partial_{F_i}\partial_{F_j} f 
)
)\|_{L^2(U)}
\\
&\leq
d
\| b_n \|_{L^{\infty}(U)} \| b_m \|_{L^{\infty}(U)}
\| \hat{v} \|^2_{[L^{\infty}(U)]^d} \\
&\sum_{i = 1}^{d} \sum_{j = 1}^{d}
|\partial_{F_{i}} \partial_{F_{j}} f| \\
\end{split}
\label{errorbounds:eqn5}
\end{equation}
Now, 
\begin{equation}
\sum_{i = 1}^{d} \sum_{j = 1}^{d} |\partial_{F_{i}} \partial_{F_{j}}
f| = {\bf 1}^{T} \partial^{2} f {\bf 1},
\label{errorbounds:eqn6}
\end{equation}
where $\partial^{2} f$ refers to the Hessian of $f$. From the Chain
Rule for Hessians \cite{Scott2003} and adapting for Sobolev spaces
\cite{Adams1975} we obtain
\[
\partial^2 (f \circ F) = \partial F \partial^2 f \partial F^{T}
 + 
\nabla f \star \mcH,
\]
where $\partial^2 f$ refers to the Hessian of $f$,
\[
\nabla f \star \mcH := 
\left[
\begin{array}{ccc}
(\nabla f)^T \mcH_{1,1} & \dots & (\nabla f)^T \mcH_{1,d}  \\
\vdots & \ddots & \vdots  \\
(\nabla f)^T \mcH_{d,1} & \dots & (\nabla f)^T \mcH_{d,d}  \\
\end{array}
\right]
\]
and
\[
\mcH_{i,j} := 
\left[ 
\begin{array}{c}
\partial_{x_i} \partial_{x_j} F_1 \\
\vdots \\
\partial_{x_i} \partial_{x_j} F_d \\
\end{array}
\right]
\]
for all $i,j = 1,\dots,d$. It follows that
\begin{equation}
|{\bf 1}^T \partial^2 f \,{\bf 1}| 
\leq 
d \F^{-2}_{min}(
\| \partial^2 (f \circ F) \|_2
+
\| \nabla f \star \mcH \|_2
).
\label{errorbounds:eqn2}
\end{equation}
Furthermore,
\begin{equation}
\begin{split}
\| \nabla f \star \mcH\|_2 
&\leq
\| \nabla f \star \mcH\|_F 
\leq 
\sqrt{
\sum_{i,j=1}^d
| (\nabla f)^{T} \partial
F(\by)^{-1}\mcH_{i,j}|^2} \\
&\leq 
\| \nabla f \|_2
\F_{min}^{-1}
\sqrt{
\sum_{i,j=1}^d
\|\mcH_{i,j}\|^2_2} \\
&\leq 
d^{3/2}
\| \partial F^{-T} \nabla (f \circ F) \|_2
\F_{min}^{-1} 
 \sup_{i,j,n} |\partial_{x_i} \partial_{x_j} F_n| \\
&\leq 
d^{3/2}
\| (f \circ F) \|_{H^{1}(U)}
\F_{min}^{-2} 
 \sup_{i,j,n} |\partial_{x_i} \partial_{x_j} F_n| 
\end{split}
 \label{errorbounds:eqn3}
\end{equation}
Now, for $i,j = 1,\dots,N$ we have
\corb{\begin{equation}
|\partial_{x_i} \partial_{x_j} F_n| \leq 1 + 4 
\| \hat v \|_{[W^{1,\infty}(U)]^{d}}
\sum_{n = 1}^{N} \sqrt{\mu}_{n} \|b_{n}\|_{W^{2,\infty}(U)}.
\label{errorbounds:eqn4}
\end{equation}}
Combining \eqref{errorbounds:eqn5}, \eqref{errorbounds:eqn6},
\eqref{errorbounds:eqn2} , \eqref{errorbounds:eqn3} and
\eqref{errorbounds:eqn4} we obtain the result.
\end{proof}

\corb{From Lemmas \ref{perturbation:lemma5} and \ref{errorbounds:lemma1} -
\ref{errorbounds:lemma10} we have that for all $ \by_s \in \Gamma_s$
and for all $\byd_f \in \Gamma_f$
\[
|D^{2}_{\by_f} Q(\by)(\byd_f, \byd_f)|  
\leq \sum_{n,m=1}^{N_f} |\delta \tilde y^f_{n}| |\delta \tilde y^f_{m}| G_{n,m}, 
\]
where 
\[
\begin{split}
&G_{n,m}  
(\|(f
\circ F)(\bys,\bo)\|_{H^{2}(U)}, \| (\tilde{u} \circ F)(\bys,\bo)
\|_{H^1(U)}, C_P(U), \\ 
& 
\| \varphi(\bys,\bo) \|_{H^1(U)}, \| \bw(\bys,\bo) \|_{H^1(U)}
, \| B_{f,n} \|_2, \| B_{f,m} \|_2,
\|b_{f,m}\|_{W^{2,\infty}(U)}, \\
&\|b_{f,n}\|_{W^{2,\infty}(U)},
a_{max} 
\F_{max}, \F_{min},d, \| \hat{v} \|_{[W^{2,\infty}(U)]^d},
\sum_{l =
  1}^{N} \sqrt{\mu}_{s,l} \|b_{s,l}\|_{W^{2,\infty}(U)}
\\
&, \sum_{l =
  1}^{N} \sqrt{\mu}_{f,l} \|b_{f,l}\|_{W^{2,\infty}(U)}
)
\end{split}
\]
is a bounded constant that depends on the indicated
parameters. We have now proven the following result.}
\corb{\begin{theorem} 
  For all
  $\bys \in \Gamma_s$ and $\byf \in \Gamma_f$
\[
\| Q(\bys,\byf) - \hat{Q}(\bys,\byf)
\|_{L^2_{\rho}(\Gamma)} 
\leq \frac{1}{2} \sum_{n,m=1}^{N_f}
\sqrt{\mu_{f,n}} 
\sqrt{\mu_{f,m}} 
G_{n,m}
\leq \gset
(\sum_{k=1}^{N_f}
\sqrt{\mu_{f,k}})^2,
\]
where $\gset := \frac{1}{2} \sup_{n,m} G_{n,m}$.
\end{theorem}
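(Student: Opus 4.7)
The plan is to assemble the claim from the Taylor remainder representation and the pointwise Hessian estimate already prepared in the preceding lemmas. First I would invoke expansion \eqref{perturbation:representation} with base point $\by_0 = (\bys, \bo)$ and increment $\delta \by = (\bo, \byf)$, which matches the linear approximation $\hat Q$ defined in \eqref{perturbation:approximation}; this gives the pointwise identity
\[
Q(\bys, \byf) - \hat Q(\bys, \byf) = \tfrac{1}{2}\, D^{2}_{\byf} Q(\bys, \theta \byf)(\byf, \byf)
\]
for some $\theta \in (0,1)$, so the entire approximation error is captured by the Hessian remainder in the $\byf$ variables only.

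Second, I would apply the uniform pointwise bound on the Hessian assembled just above the theorem from Lemmas \ref{perturbation:lemma5} and \ref{errorbounds:lemma1}--\ref{errorbounds:lemma10}, namely
\[
\bigl|D^{2}_{\byf} Q(\bys, \theta \byf)(\byf, \byf)\bigr| \leq \sum_{n,m=1}^{N_f} |\delta \tilde y^{f}_{n}|\, |\delta \tilde y^{f}_{m}|\, G_{n,m}.
\]
Since $\byf \in \Gamma_f = [-1,1]^{N_f}$ and $\delta \tilde y^{f}_{n} = \sqrt{\mu_{f,n}}\, y^{f}_{n}$ with $|y^{f}_{n}| \leq 1$, one has $|\delta \tilde y^{f}_{n}| \leq \sqrt{\mu_{f,n}}$. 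Combining these inequalities yields the pointwise estimate
\[
|Q(\bys, \byf) - \hat Q(\bys, \byf)| \leq \tfrac{1}{2} \sum_{n,m=1}^{N_f} \sqrt{\mu_{f,n}}\, \sqrt{\mu_{f,m}}\, G_{n,m},
\]
which holds uniformly in $(\bys, \byf) \in \Gamma$.

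Third, because the right-hand side is a constant independent of $(\bys, \byf)$ and $\rho$ is a probability density on $\Gamma$ (so $\int_\Gamma \rho\, d\by = 1$), taking the $L^{2}_{\rho}(\Gamma)$ norm of both sides preserves this constant and delivers the first inequality of the theorem. The second inequality is purely algebraic: pulling $\sup_{n,m} G_{n,m}$ out of the double sum gives
\[
\tfrac{1}{2} \sum_{n,m=1}^{N_f} \sqrt{\mu_{f,n}}\, \sqrt{\mu_{f,m}}\, G_{n,m} \leq \gset \Bigl( \sum_{k=1}^{N_f} \sqrt{\mu_{f,k}} \Bigr)^{\!2},
\]
with $\gset = \tfrac{1}{2} \sup_{n,m} G_{n,m}$.

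The main obstacle is not in this concluding argument, which is essentially mechanical, but in justifying that $G_{n,m}$ is genuinely a finite constant when the Hessian is evaluated at the intermediate point $(\bys, \theta \byf)$ rather than at the base $(\bys, \bo)$. Taylor's remainder forces us to control the second variation at that intermediate point, so each ingredient entering $G_{n,m}$ --- the $H^{1}_{0}(U)$ norms of $(\tilde u \circ F)(\bys, \theta \byf)$, $\varphi(\bys, \theta \byf)$ and $\bw(\bys, \theta \byf)$, together with the $H^{2}(U)$ norm of $(f \circ F)(\bys, \theta \byf)$ --- must be bounded uniformly in $\byf \in \Gamma_f$. Lemmas \ref{errorbounds:lemma1}--\ref{errorbounds:lemma10} are already stated "for all $\by \in \Gamma$", and standard a priori estimates for the primal and adjoint problems, together with Assumptions \ref{setup:Assumption1}, \ref{setup:Assumption2}, and \ref{errorbounds:assumption1}, yield the required uniform control, so $G_{n,m}$ may indeed be taken independent of $(\bys, \byf)$ and the proof concludes.
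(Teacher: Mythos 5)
Your proposal is correct and follows essentially the same route as the paper: identify $Q-\hat Q$ with the Lagrange remainder $\tfrac12 D^2_{\byf}Q$ at an intermediate point, apply the uniform Hessian bound assembled from Lemmas \ref{perturbation:lemma5} and \ref{errorbounds:lemma1}--\ref{errorbounds:lemma10} together with $|\delta\tilde y^f_n|\leq\sqrt{\mu_{f,n}}$, pass to the $L^2_\rho(\Gamma)$ norm using that $\rho$ is a probability density, and pull out $\sup_{n,m}G_{n,m}$. Your explicit remark that the intermediate point $(\bys,\theta\byf)$ still lies in $\Gamma$ (so the lemmas, stated for all $\by\in\Gamma$, apply there) is a detail the paper leaves implicit, and is worth keeping.
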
}
\subsection{Finite element error} 
\label{erroranalysis:finiteelement}
The finite element convergence rate is directly depend on the
regularity of the solution $u$ and influence function $\varphi$, the
polynomial order $H_{h}(U) \subset H^{1}_{0}(U)$ of the finite element
space and the mesh size $h$). By applying the triangular and Jensen
inequalities we obtain
\[
\corb{
\begin{split}
&\| 
\hat{Q}(\bys,\byf) - \hat{Q}_h(\bys,\byf) \|_{L^{1}_{\rho}(\Gamma)} 
\leq \|
Q(\bys,\bo) - Q_h(\bys,\bo) \|_{L^{1}_{\rho}(\Gamma_s)} \\ 
& + \sum_{n =
  1}^{N_f} \sqrt{\mu_{f,n}} \|\int_{U} \tilde \alpha_{n}(x,\by_s,\bo) -
\tilde \alpha_{i,h}(x,\by_s,\bo)\|_{L^{2}_{\rho}(\Gamma_s)}.
\end{split}
}
\]
Following a duality argument we obtain
\[
\corb{\|Q(\by_s,\bo) - Q_h(\by_s,\bo)\|_{L^{1}_{\rho}(\Gamma_s)} \leq 
a_{max} \F^{d}_{max}
\F^{-2}_{min} C_{\Gamma_s}(r)D_{\Gamma_s}(r) h^{2r}.}
\]
\corb{for some constant $r \in \N$, $C_{\Gamma_s}(r):= \int_{\Gamma_s}
  C(r,u(\bys,\bo))\rho(\by_s)d\by$ and $D_{\Gamma_s}(r):=
  \int_{\Gamma_s} C(r,\varphi(\bys,\bo))$ $\rho(\bys)d\by$.}  The
constant $r$ depends on the polynomial degree of the finite element
basis and the regularity properties of the solution $\tilde u \circ F$
(which is dependent on the regularity of $f$, the diffusion
coefficient $a$ and the mapping $F$).  It follows that
\corb{\begin{equation}
\| \hat{Q}(\bys,\bo) - \hat{Q}_h(\bys,\bo) \|_{L^{2}_{\rho}(\Gamma)} 
 \leq 
\sset_0 h^{2r}
+ 
 h^{r} \sum_{n = 1}^{N_f} \sset_n \sqrt{\mu_{f,n}} 
\label{errorestimates:finiteelement:eqn1}
\end{equation}}
where $\sset_0 :=a_{max} \F^{d}_{max} \F^{-2}_{min}
C_{\Gamma_s}(r)D_{\Gamma_s}(r)$ and
\[
\corb{
\begin{split}
&\sset_n
(\|(f
\circ F)(\bys,\bo)\|_{L^{2}(U)}, 
\| \bw(\bys, \bo) \|_{H^1(U)}
, \| B_{f,n} \|_2, \|b_{f,n}\|_{L^{\infty}(U)}, \\
&a_{max} 
\F_{max}, \F_{min},d, \| \hat{v} \|_{[L^{\infty}(U)]^d},
C_{\Gamma_s}(r),D_{\Gamma_s}(r)
)
\end{split}
}
\]
are bounded constants for $n = 1, \dots, N_f$.

\subsection{Sparse grid error} 
For the sake of simplicity, we will only explicitly show the
convergence rates for the isotropic Smolyak sparse grid. However, this
analysis can be extended to the anisotropic case without much
difficulty.  Now, we have that
\[
\corb{
\begin{split}
\|\hat{Q}_{h}(\bys,\byf) - \mcS_{w}^{m,g} \hat{Q}_{h}(\bys,\byf) 
\|_{L^2_{\rho}(\Gamma)}
& \leq
a_{max}\F^{d}_{max} \F_{min}^{-2}
   \|e_{0}\|_{L^{2}_{\rho}(\Gamma_{s};H^{1}_{0}(U))} \\
&+
\sum_{n = 1}^{N_f} \sqrt{\mu_{f,i}} 
\|e_{n}\|_{L^{2}_{\rho}(\Gamma_{s})},
\end{split}}
\]
\corb{where $e_0:=\hat{u}_h(\bys,\bo) -
  \mcS^{m,g}_w[\hat{u}_h(\bys,\bo)]$ and
\[
e_n: = \int_U \tilde
  \alpha_{n,h}(\bys,\bo) - \mcS^{m,g}_w[ \int_U \tilde
    \alpha_{n,h}(\bys,\bo)] 
\]
for $n = 1,\dots,N_f$, and
\[
L^{q}_{\rho}(\Gamma_s;V) := \{v:\Gamma_s \times U \rightarrow V\,\, 
\mbox{is strongly
  measurable,} \,\, \int_{\Gamma} \|v\|^{q}_{V}\,\rho(\by)\,d\by <
\infty \}.
\]}
for any Banach space $V$ defined on $U$.

In \cite{nobile2008b,nobile2008a} the error estimates for isotropic
and anisotropic Smolyak sparse grids with Clenshaw-Curtis and Gaussian
abscissas are derived. It is shown that
$\|e_0\|_{L^{2}_{\rho}(\Gamma_{s};H^{1}_{0}(U))}$ (and
$\|e_n\|_{L^{2}_{\rho}(\Gamma_{s})}$ for $n = 1, \dots, N_f$) exhibit
algebraic or sub-exponential convergence with respect to the number of
collocation knots $\eta$. \corb{For these estimates to be valid it is
  assumed that the semi-discrete solution $\hat u_{0,h} :=
  \hat{u}_{h}({\by_s,\bo})$ and $\hat u_{n,h} := \int_U \tilde
  \alpha_{k,n}(\cdot,\by_s,\bo)$, $n = 1,\dots,N_f$ admit an analytic
  extension in the same region $\Theta_{\beta,N_s}$. This is a
  reasonable assumption to make.}

\corb{Consider the polyellipse in $\mcE_{\sigma_1, \dots, \sigma_{N_s}}
:= \Pi_{n=1}^{N_s}\mcE_{n,\sigma_n} \subset \C^{N_s}$ where
\[
\begin{split}
  \mcE_{n,\sigma_n} &:= \left\{ z \in \C; \sigma_n > 0;
  \sigma_n \geq \kappa_n \geq 0;
  \,\Real(z) = \frac{e^{\kappa_n}
 +
 e^{-\kappa_n}}{2}cos(\theta), \right.\\
&\left.
  \Imag(z)= \frac{e^{\kappa_n} -
  e^{-\kappa_n}}{2}sin(\theta), \theta \in [0,2\pi) \right\},\,\,
\end{split}
\]
and
\[
\Sigma_{n} := \left\{ z_n \in \mathbb{C};\, y_n = y
+ w_n,\,y \in [-1,1],\, |w_n| \leq \tau_n :=
\frac{\beta}{1 - \tilde{\delta}}
\right\}
\]
for $n = 1,\dots,N_s$.  For the sparse grid error estimates to be
valid the solution $(\tilde{u}_h(\cdot,\bys,\bo)$ and $\int_U \tilde
\alpha_{n,h}(\cdot,\by_s,\bo)$, $n = 1,\dots,N_f$, have to admit an
extension on the polyellipse $\mcE_{\sigma_1, \dots,
  \sigma_{N_s}}$. The coefficients $\sigma_n$, for $n = 1, \dots, N$
control the overall decay $\hat \sigma$ of the sparse grid error
estimate. Since we restrict our attention to isotropic sparse grids
the decay will be dictated by the smallest $\sigma_n$ i.e.
$\hat{\sigma} \equiv \min_{n = 1,\dots, N_s} \sigma_n.$}

\corb{The next step is to find a suitable embedding of
  $\mcE_{\sigma_1, \dots, \sigma_{N_s}}$ in $\Theta_{\beta,
    N_s}$. Thus we need to pick the largest $\sigma_n$, $n =
  1,\dots,N_s$ such that $\mcE_{\sigma_1, \dots, \sigma_{N_s}} \subset
  \Theta_{\beta, N_s}$. This is achieved by forming the set
  $\Sigma:=\Sigma_1 \times \dots \times \Sigma_{N_s}$ and letting
  $\sigma_1 = \sigma_2 = \dots = \sigma_{N_s} = \hat{\sigma} =
  \log{(\sqrt{\tau^2_{N_s} + 1} + \tau_{N_s} )} > 0$ as shown in
  Figure \ref{errorbounds:fig1}.}

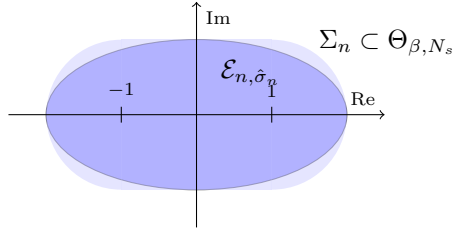
\begin{figure}[h]
\begin{center}
\begin{tikzpicture}
    \begin{scope}[font=\scriptsize]
    \filldraw[fill=blue!80!,semitransparent] (0,0) ellipse (2 and 1);

    \node[shape=semicircle, rotate=270,fill=blue!20,semitransparent,inner
      sep=12.7pt, anchor=south, outer sep=0pt] at (1,0) (char) {};
    \node[shape=semicircle, rotate=90,fill=blue!20,semitransparent,inner
      sep=12.7pt, anchor=south, outer sep=0pt] at (-1,0) (char) {};
    \path [draw=none,fill=blue!20,semitransparent] (-1.001,-1) rectangle
    (1.001,1.001);
    
    \draw [->] (-2.5, 0) -- (2.5, 0) node [above left]  {$\Real $};
    \draw [->] (0,-1.5) -- (0,1.5) node [below right] {$\Imag$};
    \draw (1,-3pt) -- (1,3pt)   node [above] {$1$};
    \draw (-1,-3pt) -- (-1,3pt) node [above] {$-1$};
    \end{scope}
    
    \node [below right] at (1.50,1.25) {$\Sigma_n \subset \Theta_{\beta,N_s}$};
    \node [below right] at (0.2,0.85) {$\mcE_{n,\hat \sigma_n}$}; 
\end{tikzpicture}
\end{center}
\caption{\corb{Embedding of $\mcE_{n,\hat \sigma_n}$ in $\Sigma_n
  \subset \Theta_{\beta, N_s}$.}}
\label{errorbounds:fig1}
\end{figure}

\corb{We now have almost everything we need to state the sparse grid
  error estimates. However, in \cite{nobile2008a} to simplify the
  estimate it is assumed that if $v \in C^{0}(\Gamma;H^{1}_{0}(U))$
  then the term $M(v)$ (see page 2322) is equal to one. We reintroduce
  the term $M(v)$ and note that it can be bounded by $\max_{\bz \in
    \Theta_{\beta,N_s}}$ $\|v(\bz)\|_{H^{1}_{0}(U)}$ and update the
  sparse grids error estimate. To this end let $\tilde M : = \max_{n =
    0}^{N_s}\max_{\bz \in \Theta_{N_s,\beta}} \| \tilde u_{n,h}(\bz)
  \|_{H^{1}_{0}(U)}$.
\begin{remark} In \cite{Castrillon2013} Corollary 8 a bound for
  $\| \hat u(\cdot, \bz) \|_{H^{1}_{0}(U)}$, $\bz \in
  \Theta_{\beta,N_s}$, can be obtained by applying the Poincar\'{e}
  inequality. Following a similar argument a bound for $\| \hat
  \varphi(\cdot, \bz) \|_{H^{1}_{0}(U)}$ for all $\bz \in
  \Theta_{\beta,N_s}$. Thus bounds for $\| \tilde u_{n,h}(\bz)
  \|_{H^{1}_{0}(U)}$ for $n = 0,\dots, N_s$ and for all $\bz \in
  \Theta_{\beta,N_s}$ can be obtained.
\end{remark}
}

\corb{Modifying Theorem 3.11 in \cite{nobile2008a} it can be shown
  that given a sufficiently large $\eta$ ($w > N_s / \log{2}$) a
  Smolyak sparse grid with a nested Clenshaw Curtis abscissas we
  obtain the following estimate
\begin{equation}
\|e_n\|_{L^{2}_{\rho} (\Gamma_{s})} \leq
\mcQ(\sigma,\delta^{*}(\sigma),N_s,\tilde
M)\eta^{\mu_3(\sigma,\delta^{*},N_s)}\exp \left(-\frac{N_s
  \sigma}{2^{1/N_s}} \eta^{\mu_2(N_s)} \right) \\
\label{erroranalysis:sparsegrid:estimate2}
\end{equation}
for $n = 0, \dots, N_f$, where $\sigma = \hat{\sigma}/2$,
$\delta^{*}(\sigma) := (e \log{(2)} - 1) / \tilde C_{2}(\sigma)$,
\[\mcQ(\sigma,\delta^{*}(\sigma),N_s, \tilde M) := 
\frac{ C_1(\sigma,\delta^{*}(\sigma),\tilde M)}{\exp(\sigma \delta^{*}(\sigma)
  \tilde{C}_2(\sigma) )}
\frac{\max\{1,C_1(\sigma,\delta^{*}(\sigma),\tilde M)\}^{N_s}}{|1 -
  C_1(\sigma,\delta^{*}(\sigma),\tilde M)|},
\]
$\mu_2(N_s) = \frac{log(2)}{N_s(1 + log(2N_s))}$ and
$\mu_3(\sigma,\delta^{*}(\sigma),Ns) = \frac{\sigma
  \delta^{*}(\sigma)\tilde{C}_2(\sigma)}{1 + \log{(2N_s)}}$.
Furthermore, $C(\sigma) = \frac{4}{e^{2\sigma} - 1}$,
\[
\begin{split}
\tilde{C}_{2}(\sigma) &= 1 + \frac{1}{\log{2}}\sqrt{
\frac{\pi}{2\sigma}}
,\,\,\delta^{*}(\sigma) = \frac{e\log{(2)} - 1}{\tilde{C}_2
  (\sigma)},\\
C_1(\sigma,\delta,\tilde M)
&=
\frac{4\tilde{M}
C(\sigma)a(\delta,\sigma)}{
  e\delta\sigma},
\end{split}
\]
and
\[
a(\delta,\sigma):=
\exp{
\left(
\delta \sigma \left\{
\frac{1}{\sigma \log^{2}{(2)}}
+ \frac{1}{\log{(2)}\sqrt{2 \sigma}}
+ 2\left( 1 + \frac{1}{\log{(2)}} 
\sqrt{ \frac{\pi}{2\sigma} }
\right)
\right\}
\right)
}.
\]
}


\section{Complexity and tolerance}
\label{complexity} 
\corb{In this section we derive the total work $W$ needed such that
  $|var[Q(\bys,$ $\byf)] - var[\mcS^{m,g}_w[\hat{Q}_{h}(\bys,\byf)]]
  |$ and $|\mathbb{E}[Q(\bys,\byf)]$ $-\mathbb{E}[\mcS^{m,g}_w$ $[
      Q_{h}(\bys,\byf)]]|$ for the isotropic CC sparse grid is less or
  equal to a given tolerance parameter $tol \in \R^{+}$.}

Let $N_{h}$ be the number of degrees of freedom used to compute the
semi-discrete approximation $u_h \in H_{h}(U) \subset H^{1}_{0}(U)$.
We assume that the computational complexity for solving $u_{h}$ is
$\mcO( N^{q}_{h})$ for each realization, where the constant $q \geq 1$
reflects the optimality of the finite element solver. The cost for
solving the approximation of the influence function $\varphi_{h} \in
H_{h}(U)$ is also $\mcO( N^{q}_{h})$. \corb{Thus for any $\bys \in
  \Gamma_{s}$, the cost for computing
  $Q_{h}(\bys,\bo):=B(\bys,\bo;u_{h}(\bys,\bo),
  \varphi_{h}(\bys,\bo))$ is bounded by $\mcO(N_hd^2 + N^{q}_{h})$.
  Similarly, for any $\bys \in \Gamma_{s}$ the cost for evaluating
  $\int_U \tilde \alpha_{n,h}(\cdot, \bys, \bo)$ is $\mcO(N_{h}d^2 +
  N^q_h)$.}

\corb{
  \begin{remark}
To compute the expectation integrals for the mean and variance
correction a Gauss quadrature scheme coupled with an {\it auxiliary}
probability distribution $\hat \rho(\by)$ such that
  \[
  \hat \rho(\by) = \Pi_{n = 1}^{N} \rho_n(y_n)\,\,\,
  \mbox{and $\rho / \hat \rho < C < \infty$.}
  \]
  for some $C > 0$ (See \cite{Castrillon2013} for details). However,
  to simplify the analysis it is assumed that quadrature is exact and
  of cost $\mcO(1)$.
\end{remark}
}
\corb{
  Let $\mcS^{m,g}_{\lv}$ be the sparse grid operator characterized by
  $m(i)$ and $g({\bf i})$. Furthermore, let
  $\eta_0(N_s,m,g,\lv,\Theta_{\beta,N_s})$ be the number of the sparse
  grid knots for constructing $\mcS^{m,g}_{\lv} [\tilde
    \alpha_{n,h}(\cdot,\bys,\bo)]$ and
  $\eta_n(N_s,m,g,\lv,\Theta_{\beta,N_s})$ for constructing
  $\mcS^{m,g}_{\lv} [\tilde \alpha_{n,h}(\cdot,\bys,\bo)]$, for $n =
  1,\dots,N_f$. The cost for computing $\mathbb{E}[\mcS^{m,g}_{\lv} $
    $[Q_{h}(\bys,\bo )]]$ is $\mcO( (N_hd^2 + N^{q}_{h})\eta_0)$
  and the cost for computing $\sum_{n = 1}^{N_f}\sqrt{\mu_{f,n}}$ $
  \esets{ \tilde y^{f}_{n} \mcS^{m,g}_{\lv}[\int_U \tilde
      \alpha_{n}(\cdot,\bys,\bo)]}$ is bounded by $\mcO( (N_hd^2 +
  N^{q}_{h})N_f\eta)$, where
  \[
  \eta := \max_{n=0,\dots,N_f} \eta_n.
  \]
  The total cost for computing the mean correction is bounded by
\begin{equation}
W^{mean}_{Total}(tol) = \mcO( (N_h(tol)d^2 + N^{q}_{h}(tol))N_f(tol)\eta(tol)).
\label{complexity:eqn1}
\end{equation}
Following a similar argument the cost for computing the
variance correction is bounded by
\begin{equation}
W^{var}_{Total}(tol) = \mcO( (N_h(tol)d^2 + N^{q}_{h}(tol))N^2_f(tol)\eta(tol)).
\label{complexity:eqn2}
\end{equation}
} We now obtain the estimates for $N_h(tol)$, $N_f(tol)$ and
$\eta(tol)$ for the Perturbation, Finite Element and Sparse Grids
respectively:

(a) {\bf Perturbation:} From the truncation estimate derived in
Section \ref{erroranalysis:perturbation} we seek \corb{$\|Q(\by_s,$
  $\by_f) - \hat{Q}(\by_s,\by_f)\|_{L^{2}_{\rho}(\Gamma)} \leq
  \frac{tol}{3C_{P}}$} with respect to the decay of the coefficients
$\sqrt{\mu_{f,n}}$, $n = 1, \dots N_f$.  First, make the assumption
that $B_T := \sum_{n = 1}^{N_f} \sqrt{\mu_{f,n}} \leq C_{D}N^{-l}_s$
for some uniformly bounded $C_{D}>0$ and $l > 0$.  It follows that
\corb{$\|Q(\by_s,$ $\by_f) -
  \hat{Q}(\by_s,\by_f)\|_{L^{2}_{\rho}(\Gamma)} \leq
  \frac{tol}{3C_{P}}$} if
\[
B_T^2 \gset \leq C^2_{D}N^{-2l}_s \gset \leq \frac{tol}{3 C_P}.
\]
Finally, we have that
\[
N_{f}(tol) \geq \left( \frac{tol}{3C_PC_D^2\gset}
\right)^{-1/(2l)}.
\]

(b) {\bf Finite Element:} From Section
  \ref{erroranalysis:finiteelement} if
\[
\sset_0 h^{2r} + B_T \T_0 h^r \leq \frac{tol}{3 C_{PFE}}, 
\]
\corb{$\T_0 := \max_{n = 1}^{N_f} \sset_n$, then
  $\|\hat{Q}(\bys,\bo) - \hat{Q}_h(\bys,\bo) \|_{L^{2}_{\rho}(\Gamma;
    H^{1}_{0}(U))} \leq \frac{tol}{3C_{PFE}}$.} Solving the quadratic
inequality we obtain that
\[
h(tol) \leq 
\left(
-\frac{B_T \T_0}{2 \sset_0}
+
\left(
\left(
\frac{B_T \T_0}{4\sset_0}
\right)^2
+
\frac{4 tol}{12 \sset_0 C_{PFE}}
\right)^{1/2}
\right)
^{1/r}
\]
Assuming that $N_h$ grows as $\mcO(h^{-d})$ then
\[
N_h(tol) \geq 
D_3 \left(
-\frac{B_T \T_0}{2 \sset_0}
+
\left(
\left(
\frac{B_T \T_0}{4\sset_0}
\right)^2
+
\frac{4 tol}{12 \sset_0 C_{FE}}
\right)^{1/2}
\right)
^{-d/r}
\]
for some constant $D_3 > 0$.

(c) {\bf Sparse Grid:} \corb{We seek $\|\hat{Q}_{h}(\bys,\bo) -
  \mcS_{w}^{m,g} \hat{Q}_{h}(\bys,\bo) \|_{L^2_{\rho}(\Gamma)} \leq
  \frac{tol}{3C_{PSG}}$. This is satisfied if
  $\|e_0\|_{L^{2}_{\rho}(\Gamma_{s};H^{1}_{0}(U))} \leq
  \frac{tol}{6a_{max}\F^d_{max}\F^{-2}_{min}C_{PSG}}$
and}
  \[
  \corb{
  \| e_n \|_{L^{2}_{\rho}(\Gamma_{s};L^2(U))} \leq
  \frac{tol}{6
    B_T C_{PSG}} }
  \]
for $n = 1, \dots, N_f$.  Following the same strategy as in
\cite{nobile2008a} (equation (3.39)), to simplify the bound
\eqref{erroranalysis:sparsegrid:estimate2} choose $\delta^{*} = (e
\log{(2)} - 1)/\tilde{C}_2(\sigma)$.  Thus $\|\hat{Q}_{h}(\by) -
\mcS_{w}^{m,g} \hat{Q}_{h}(\by) \|_{L^2_{\rho}(\Gamma)} \leq
\frac{tol}{3C_{PSG}}$ if
\[
\corb{\eta_0(tol) \geq 
\left( \frac{
 6 
a_{max}\F^d_{max}\F^{-2}_{min}C_{PSG} C_{F}F^{N_s}
  \exp(\sigma(\beta) )}{tol 
}
\right)^{\frac{1 + \log(2N_s)}{\sigma}} 
}
\]
for a sufficiently large $N_s$, where \corb{$C_F :=
  \frac{C_1(\sigma,\delta^{*},\tilde M)}{|1 - C_1(\sigma,\delta^{*},
    \tilde M)|}$, and $F: = \max\{1,$ $C_1(\sigma,\delta^{*},\tilde
  M)\}$}. Similarly, for a sufficiently large $N_s$ we have that
\[
\corb{
\eta_n(tol) \geq 
C
\left( \frac{
6 
B_TC_{PSG} C_{F}F^{N_s}
  \exp(\sigma(\beta) )}{tol
}
\right)^{\frac{1 + \log(2N_s)}{\sigma}} 
}
\]
for $n = 1, \dots, N_f$.

\bigskip
\corb{Combining (a), (b) and (c) into equations
  \eqref{complexity:eqn1} and \eqref{complexity:eqn2} we obtain the
  total work $W^{mean}_{Total}(tol)$ and $W^{var}_{Total}(tol)$ as a
  function of a given user error tolerance $tol$.}

\section{Numerical results}
\label{numericalresults}
\corb{In this section we test the hybrid collocation-perturbation
  method on an elliptic PDE with stochastic deformation of the unit
  square domain i.e. $U = (0,1) \times (0,1)$. The deformation map
  $F:U \rightarrow \mcD(\omega)$ is given by
\[
\begin{array}{llll}
  F(x_{1}, x_{2}) = (x_{1},\,(x_{2}-0.5)(e(x_{1},\omega)) + 0.5) 
  & & if & x_{2} > 0.5\\
  F(x_{1}, x_{2}) = (x_{1},\,x_{2}) & & if & 0 \leq x_{2} \leq 0.5.
\end{array}
\]
According to this map only the upper half of the square is deformed
but the lower half is left unchanged.  The cartoon example of the
deformation on the unit square $U$ is shown in Figure
\ref{numericalresults:fig0}.}
\begin{figure}[h]
\begin{center}
\begin{tikzpicture}[scale=1.5] 
    \begin{scope}[thick,font=\scriptsize]
      \path [draw=none,fill=blue!80,semitransparent] (0,0) 
      rectangle (1.001,1.001);
      \draw [->] (0,0) -- (1.5,0) node [above left]  {$x_1$};
      \draw [->] (0,0) -- (0,1.5) node [below right] {$x_2$};
      \draw (1,-3pt) -- (1,3pt)   node [right] {$1$};
    \end{scope}
    \draw[thin] (0,0) -- (1,0) -- (1,1) -- (0,1) -- (0,0);
    \draw[thin,dashed] (0,0.5) -- (1,0.5);
    \node [below right,black] at (0.5,1.5) {$U$};
    \coordinate (O) at (2,0.5);
    \coordinate (P) at (3.5,0.5);
    \draw[->, >=latex, gray, line width=4 pt] (O) -- (P);
    \node [below right,black] at (2.3,1) {$F(x_1,x_2)$};
    \node [below right,black] at (0.35,0.45) {$\tilde D$};
\end{tikzpicture}
\hspace{10mm}
\begin{tikzpicture}[scale=1.5] 
    \begin{scope}[thick,font=\scriptsize]
      \path [draw=none,fill=blue!80,semitransparent] (0,0) 
    rectangle (0.5,1);  
       \path [draw=none,fill=blue!80,semitransparent] (0.5,0) 
    rectangle (1,0.99);  
       \draw [->] (0,0) -- (1.5,0) node [above left]  {$y_1$};
       \draw [->] (0,0) -- (0,1.5) node [below right] {$y_2$};
       \draw (1,-3pt) -- (1,3pt)   node [right] {$1$};
       \draw[thin,dashed] (0,0.5) -- (1,0.5);
    \end{scope}
    \draw[fill = blue!80,semitransparent] plot[ultra thin,domain=0:0.5] (\x,{1 + 0.25 * sin(2 *
      pi * \x r)});
    \draw[fill = white] plot[ultra thin,domain=0.5:1] (\x,{1 + 0.25 * sin(2 *
      pi * \x r)});
    \draw [thin] (1,0) -- (1,1);
    \node [below right,black] at (0.5,1.5) {$\mcD(\omega)$};
    \node [below right,black] at (0.35,0.45) {$\tilde D$};
\end{tikzpicture}
\end{center}
\caption{\corb{Stochastic deformation of unit square $U$ according to
    the rule given by $F:U \rightarrow \mcD(\omega)$. The region
    $\tilde D$ is not deformed and given by $(0,1) \times (0,0.5)$.}}
\label{numericalresults:fig0}
\end{figure}
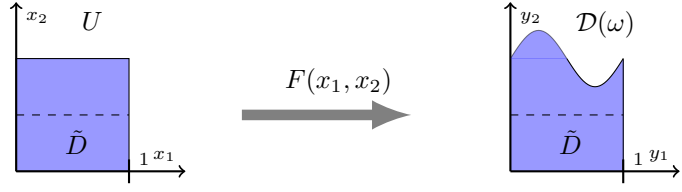

\corb{The Dirichlet boundary conditions are set according to the following
rule:
\[
u(x_{1},x_{2})|_{\partial D(\omega)}
= \Big\{
\begin{array}{ll}
  \vartheta(x_{1}) & \mbox{upper border} \\
  0       &  \mbox{otherwise}
\end{array}
\]
where $\vartheta(x_{1}) : =\exp( \frac{-1}{1 - 4(x_{1}-0.5)^2})$. Note that
the boundary condition on the upper border does not change even after
the stochastic perturbation.}

\corb{For the stochastic model $e(x_1,\omega)$ we use a variant of the
  Karhunen Lo\`{e}ve expansion of an exponential oscillating kernel
  that are encountered in optical problems \cite{Kober2003}. This
  model is given by
\begin{center}
$e_{s}(\omega,x_{1}) := 1 + cY_{1}(\omega)\left(
    \frac{\sqrt{\pi}L}{2} \right)^{1/2} + c \sum_{n = 2}^{N_{s}} \sqrt{\mu_{n}}
  \varphi_{n}(x_{1})Y_{n}(\omega); \hspace{1mm}$ $e_{f}(\omega,x_{1})
  := c \sum_{n = 1}^{N_f} \sqrt{\mu_{n+N_s}}
  \varphi_{n}(x_{1})Y_{n}(\omega)$
\end{center}
with decay $\sqrt{\mu_{n}} := \frac{(\sqrt{\pi}L)^{1/2}}{n^{k}}$, $n
\in \N$, $k \in \R^{+}$ and
\[
\varphi_{n}(x_{1}) : = 
 \frac{
sin 
\left( \frac{n \pi x_{1}}{2 L_{p}} \right) 
-
cos
\left( \frac{n \pi x_{1}}{2 L_{p}} \right) 
+
cosh(x_1) + sinh(x_1)}{n}.
 \]
It is assumed that $\{Y_{n}\}_{n = 1}^{N}$ are independent uniform
distributed in $(-\sqrt{3},\sqrt{3})$, thus $\eset{Y_n} = 0$,
$\eset{Y_nY_m} = \delta[n-m]$ for $n,m = 1 \dots N$ where
$\delta[\cdot]$ is the Kronecker delta function.}

\corb{It can be shown that for $n > 1$ we have that
\[
B_n = \left[ \begin{array}{cc}
0 & 0\\
c(x_2 - 0.5) \partial_{x_1} \varphi_n(x_1) & 0 
\end{array}
\right].
\]
This implies that $\sup_{x \in U} \sigma_{max}(B_{l}(x))$ is bounded
by a constant. Thus for $k = 1$ we obtain linear decay on the gradient
of the deformation.  In Figure \ref{numericalresults:fig1} two mesh
examples of the domain $U$ and a particular realization of
${\mcD(\omega)}$ with the model $e(x_1,\omega)$ are shown with the
Dirichlet boundary conditions.}

\corb{The QoI is defined on the bottom half of the reference domain
  ($\tilde D$), which is not deformed, as
\[
Q(\hat u) := \int_{(0,1)} \int_{(0,1/2)}
\vartheta(x_{1})\vartheta(2x_{2})\hat u(\omega,x_1,x_2)
\,dx_{1}dx_{2}.
\]
In addition, we have the following:
}
\begin{figure}[htp]
\hspace{-7mm}
\psfrag{a}[cb]{\tiny Nominal Domain}
\psfrag{x}[c]{\tiny $x_{1}$} \psfrag{$x_{2}$}[c]{\tiny y} 
\psfrag{quantityofinterest}[][r]{\tiny QoI } 
\psfrag{stochasticdomain}[][r]{\tiny St. Domain}
\psfrag{Realization}[br][br]{\small Realization}
\psfrag{Reference}[br][br]{\small Reference}
\psfrag{d1}[c][bl][0.8][90]{\small $\hat u|_{\partial U} = 0$} 
\psfrag{d2}[c][br][0.8][0]{\small $\hat u|_{\partial U} = 0$} 
\psfrag{d3}[c][bl][0.8][90]{\small $\hat u|_{\partial U} = 0$} 
\psfrag{d4}[c][br][0.8][0]{\small $\hat u|_{\partial U} = 
\vartheta(x_1)$}

   \includegraphics[ width=5.0in, height=2.1in
  ]{./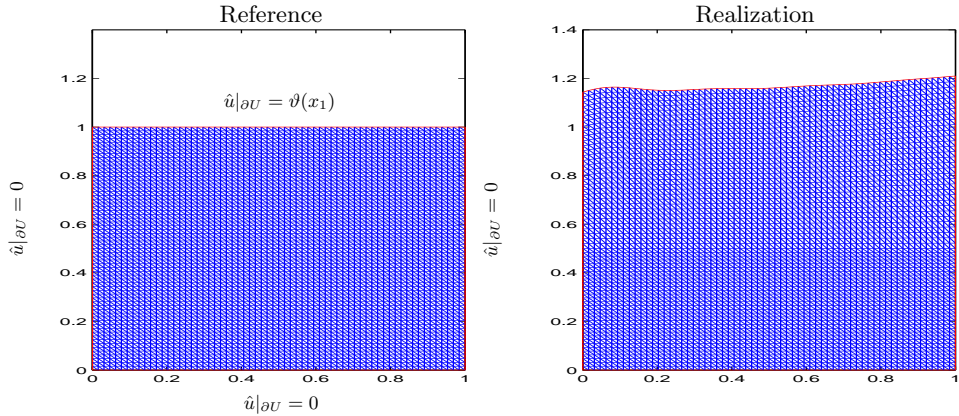} 
  \caption{Stochastic deformation of a square domain. (left) Reference
    square domain with Dirichlet boundary conditions. (right) Vertical
    deformation from stochastic model.}
\label{numericalresults:fig1}
\end{figure}

\corb{
\begin{enumerate}[(i)]
\item $a(x) = 1$ for all $x \in U$, $L = 1/2$, $L_P = 1$, $N = 15$.
  \item The domain is discretized with a $2049 \times 2049$ triangular
    mesh.
\item 
$\mathbb{E}[Q_{h}]$, $\mathbb{E}[Q^2_{h}]$, and $\sum_{i = 1}^{N_f}
  \mu_{f,i} \esets{ \int_U \tilde \alpha_{i,h}}^2$ are computed with
  the Clenshaw-Curtis isotropic sparse grid from the {\it Sparse Grids
    Matlab Kit} \cite{Tamellini2015,Back2011}.
\item The reference solutions $\var[Q_{h}(u_{ref})]$ and
  $\mathbb{E}[Q_{h}(u_{ref})]$ are computed with a dimension adaptive
  sparse grid ({\it Sparse Grid Toolbox V5.1}
  \cite{Gerstner2003,spalg,spdoc}) with Chebyshev-Gauss-Lobatto
  abscissas for $N = 15$ dimensions.
\item The QoI is normalized by the reference solution $Q(U)$.
\item The reference computed mean value is 1.054 and variance is
  0.1122 (0.3349 std) for $c = 1/15$ and cubic decay ($k = 3$).
\end{enumerate}
}

\corb{
\begin{remark}
The correction variance term is computed on the fixed reference domain
$U$ as described by Problem \ref{setup:Prob3} instead of the perturbed
domain. The pure collocation approach (without the variance
correction) and reference solution are also computed on $U$. Numerical
experiments confirm that computing the pure collocation approach on
$U$, as described by Problem \ref{setup:Prob3}, or the perturbed
domain $\mcD(\omega)$ lead to the same answer up to the finite
element error. This is consistent with the theory.
\end{remark}
}

\corb{For the first numerical example we assume that we have cubic
  decay of the deformation i.e. the gradient terms $\sqrt{\mu_{n}}
  \sup_{x \in U} \|B_n(x)\|$ decay as $n^{-3}$. The domain is formed
  from a $2049 \times 2049$ triangular mesh. The reference domain is
  computed with 30,000 knots (dimension adaptive sparse grid).  In
  Figure \ref{results:fig2}(a) we show the results for the hybrid
  collocation-perturbation method for $c = 1/15$, $k = 3$ (cubic
  decay), $N_s = 2,3,4$ dimensions and compare them to the reference
  solution. For the collocation method the level of accuracy is set to
  $w = 5$. For the variance correction we use $w = 3$ since the there
  is no benefit to increase $w$ as the sparse grid error is smaller
  than the perturbation error. The observed computational cost for
  computing the variance correction is about $10\%$ of the collocation
  method.}

  \corb{In Figure \ref{results:fig2}(b) we compare the results between
    the pure collocation \cite{Castrillon2013} and hybrid
    collocation-perturbation method. Notice the hybrid
    collocation-perturbation shows a marked improvement in accuracy
    over the pure collocation approach.}

\begin{figure}[htb]
\begin{center}
\begin{tabular}{cc}
\psfrag{AAAAAAAAA1}[][][0.8]{\tiny $N_s = 3$}
\psfrag{AAAAAAAAA2}[][][0.8]{\tiny $N_s = 4$}
\psfrag{AAAAAAAAA3}[][][0.8]{\tiny $N_s = 5$}
\psfrag{knots}{\tiny knots}
\psfrag{Var Error}[][c]{\tiny Var Error (Perturbation)}
\psfrag{b}[][]{\tiny $|Var[Q(u_{ref})] - Var[\mcS^{m,g}_w[\hat Q_h(\bys)]]|$} 
\includegraphics[width=2.2in,height=2.0in]{./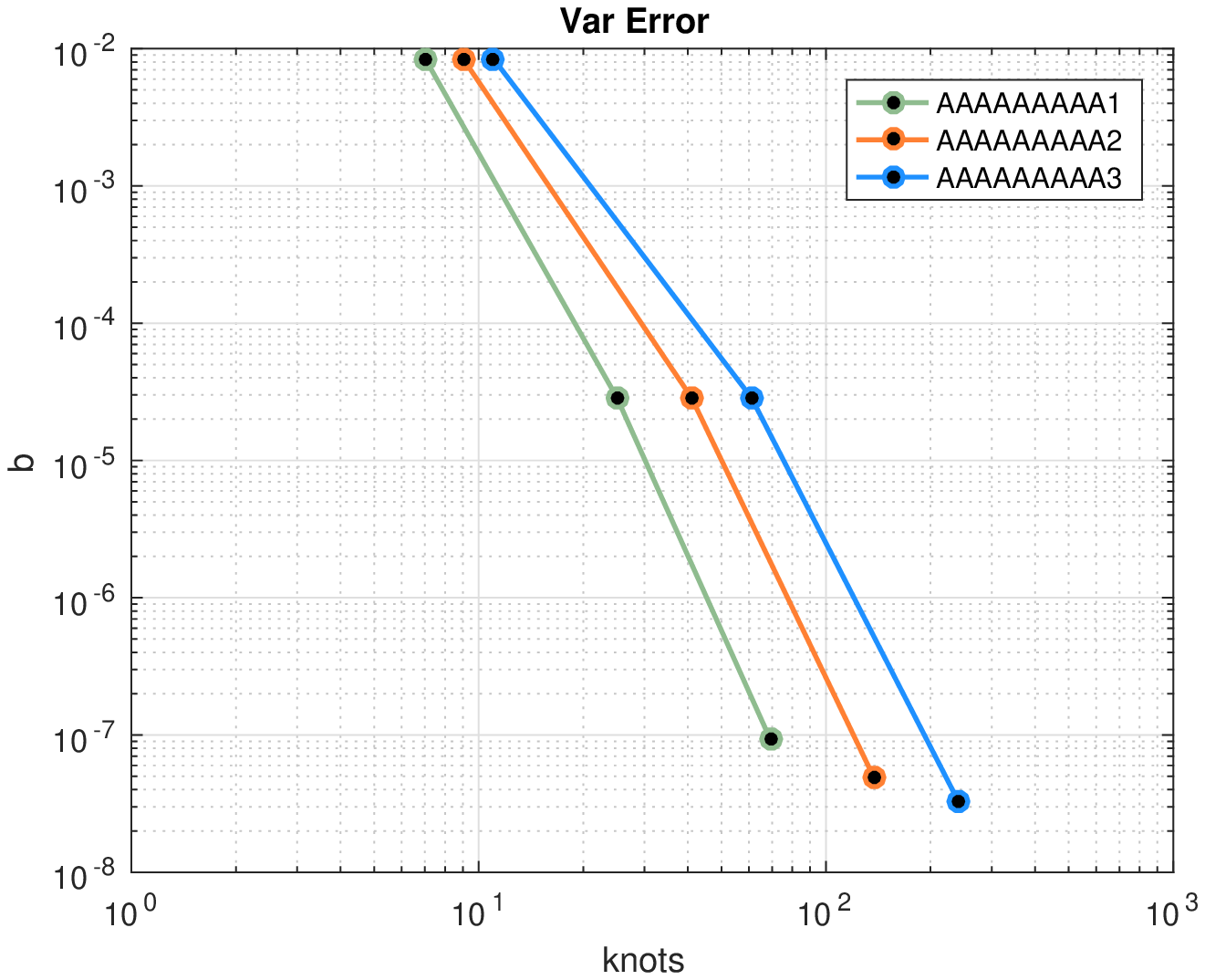}
&
\psfrag{knots}{\tiny knots}
\psfrag{b}[][c]{\tiny Var Error} 
\psfrag{Var Error}[][c]{\tiny Var Error (Collocation + Pert.)}
\psfrag{knots}[][c]{\tiny knots}
\psfrag{AAAAAAAAAAAAAAA1}[][][0.8]{\tiny Col ,$N_s = 3$}
\psfrag{AAAAAAAAAAAAAAA2}[][][0.8]{\tiny Pert,$N_s = 3$}
\psfrag{AAAAAAAAAAAAAAA3}[][][0.8]{\tiny Col ,$N_s = 4$}
\psfrag{AAAAAAAAAAAAAAA4}[][][0.8]{\tiny Pert,$N_s = 4$}
\psfrag{AAAAAAAAAAAAAAA5}[][][0.8]{\tiny Col ,$N_s = 5$}
\psfrag{AAAAAAAAAAAAAAA6}[][][0.8]{\tiny Pert,$N_s = 5$}
\includegraphics[width=2.2in,height=2.0in]{./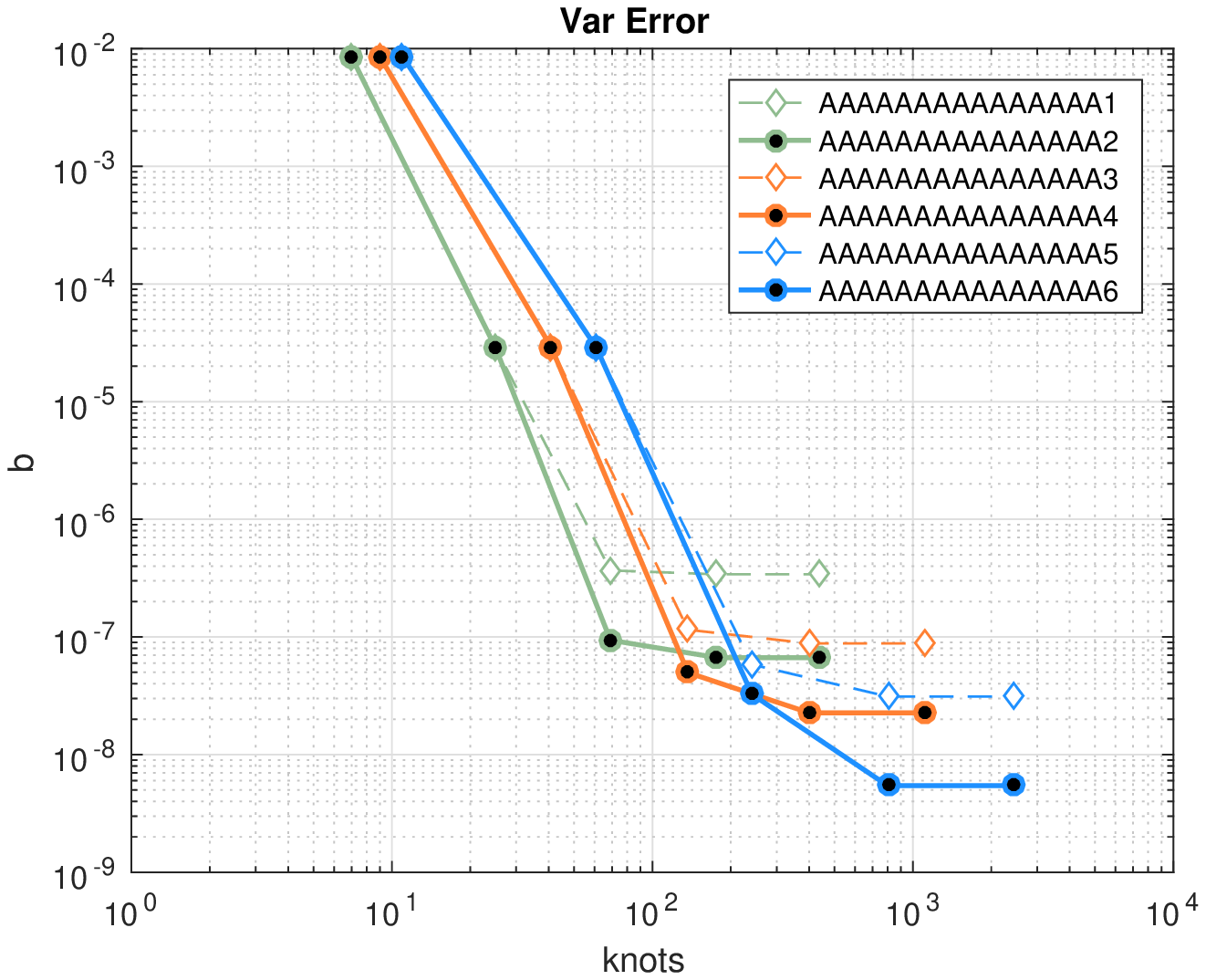} 
\\
(a) & (b)
\end{tabular}
\begin{tabular}{cc}
\psfrag{AAAAAAAAA1}[][][0.8]{\tiny $N_s = 2$}
\psfrag{AAAAAAAAA2}[][][0.8]{\tiny $N_s = 3$}
\psfrag{AAAAAAAAA3}[][][0.8]{\tiny $N_s = 4$}
\psfrag{knots}{\tiny knots}
\psfrag{Var Correction}[][c]{\tiny Var Error}
\psfrag{b}[][]{\tiny $|Var[Q(u_{ref})] - Var[\mcS^{m,g}_w[\hat Q_h(\bys)]]|$} 
\psfrag{b}[][]{} 
\\
\end{tabular}
\end{center}
\caption{\corb{Hybrid Collocation-Perturbation results with $k = 3$
    (cubic decay) and $c = 1/15$. (a) Variance error for the hybrid
    collocation-perturbation method with respect to the number of
    collocation samples with an isotropic sparse grid. The maximum
    level is set to $w=3$.  (b) Comparison between the pure
    collocation (Col) and the hybrid collocation-perturbation (Pert)
    approaches. As we observe the error decays significantly with the
    addition of the variance correction. However, the graphs saturate
    once the perturbation/truncation error is reached.  Note that the
    number of knots of the sparse grid are computed up to $w = 5$ for
    the pure collocation method. For the variance correction the
    sparse grid level is set to $w = 3$ since at this point the error
    is smaller than the perturbation error and there is no benefit to
    increasing $w$. The sparse grid knots needed for the variance
    correction are almost negligible compared to the pure
    collocation.}  }
\label{results:fig2}
\end{figure}

\corb{
\begin{remark}
Note that the number of knots of the sparse grid are computed equally
for the pure collocation and variance correction for this case.
However, in practice the number of sparse grid knots needed for the
variance correction are small compared to the pure collocation
approach. These is due to the fact that the variance correction is
scaled by the coefficients $\mu^f_n$ for $n = 1,\dots,N_f$.
\end{remark}
}

\corb{In Figure \ref{results:fig6}(a) and (b) the variance error decay
  plots for $k = 3$ (cubic) and $k = 4$ (quartic) are shown for the
  collocation (dashed line) and hybrid methods (solid line). The
  reference solutions are computed with a dimension adaptive sparse
  grid with 30,000 knots for the cubic case and 10,000 knots for the
  quartic case.  The collocation and hybrid estimates are computed
  with an isotropic sparse grid with Clenshaw-Curtis abscissas.}

\corb{It is observed that the error for the hybrid
  collocation-perturbation method decays faster, as the dimensions are
  increased, compared to the pure collocation method. Moreover, as the
  dimensions are increased the accuracy gain of the perturbation
  method accelerates significantly (c.f. Figure
  \ref{results:fig6}(b)). The accuracy improves from one order of
  magnitude to 23 times improvement. We expect the accuracy to further
  accelerate as we increase $w$. However, we are limited in
  computational resources to compute larger mesh sizes.}
  

\begin{figure}
\begin{center}
\begin{tabular}{cc}
\psfrag{Dimension}[bl]{\tiny Dimension}
\psfrag{Mean Error}[bl]{\tiny Mean Error}
\psfrag{Mean Error vs Dimension}[bl]{\tiny Mean Truncation Error}
\includegraphics[width=2.2in,height=2.0in]
{./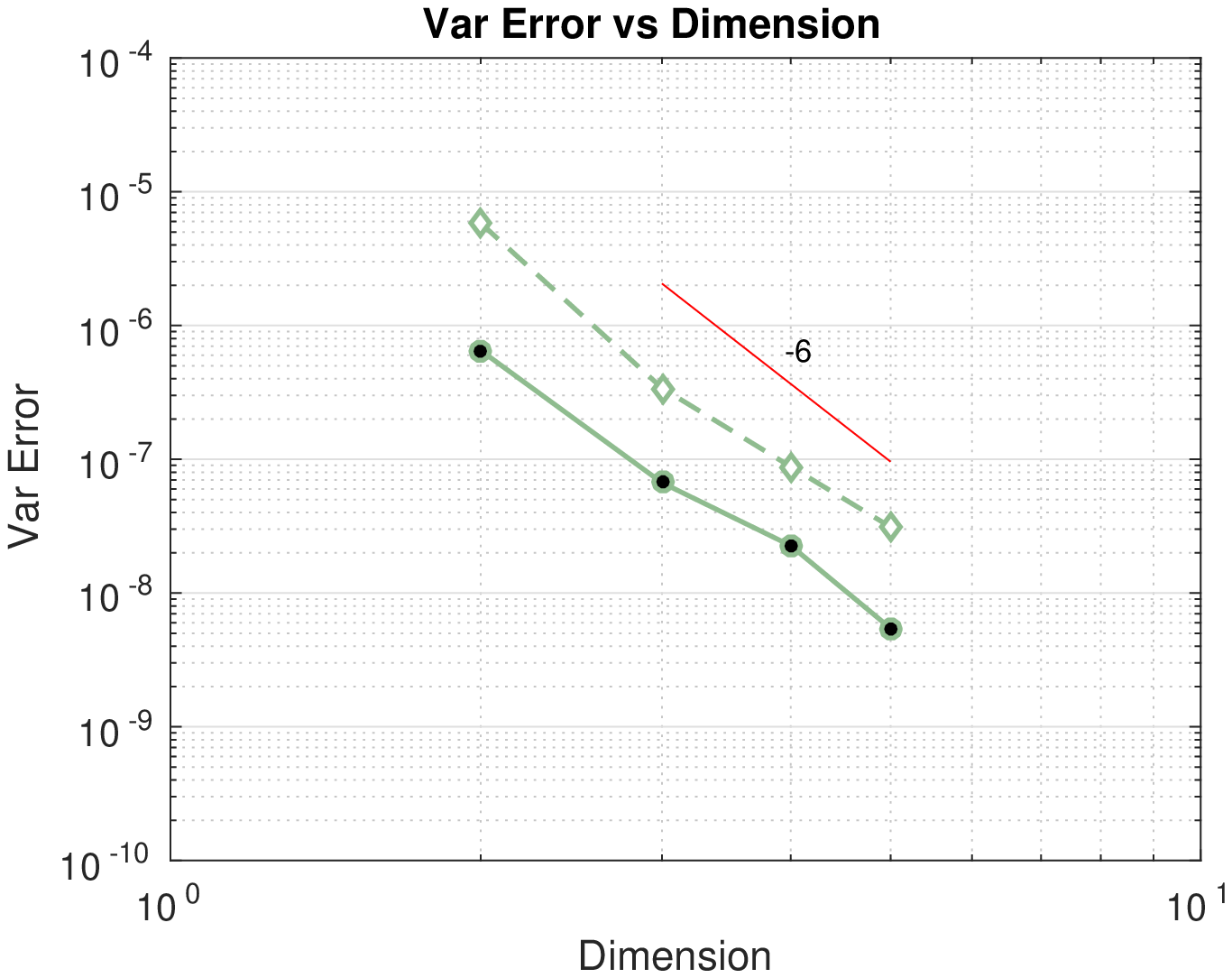} &
\psfrag{Dimension}[bl]{\tiny Dimension}
\psfrag{Var Error}[bl]{\tiny Var Error}
\psfrag{Var Error vs Dimension}[bl]{
  \tiny Variance Truncation Error}
\includegraphics[width=2.2in,height=2.0in]
{./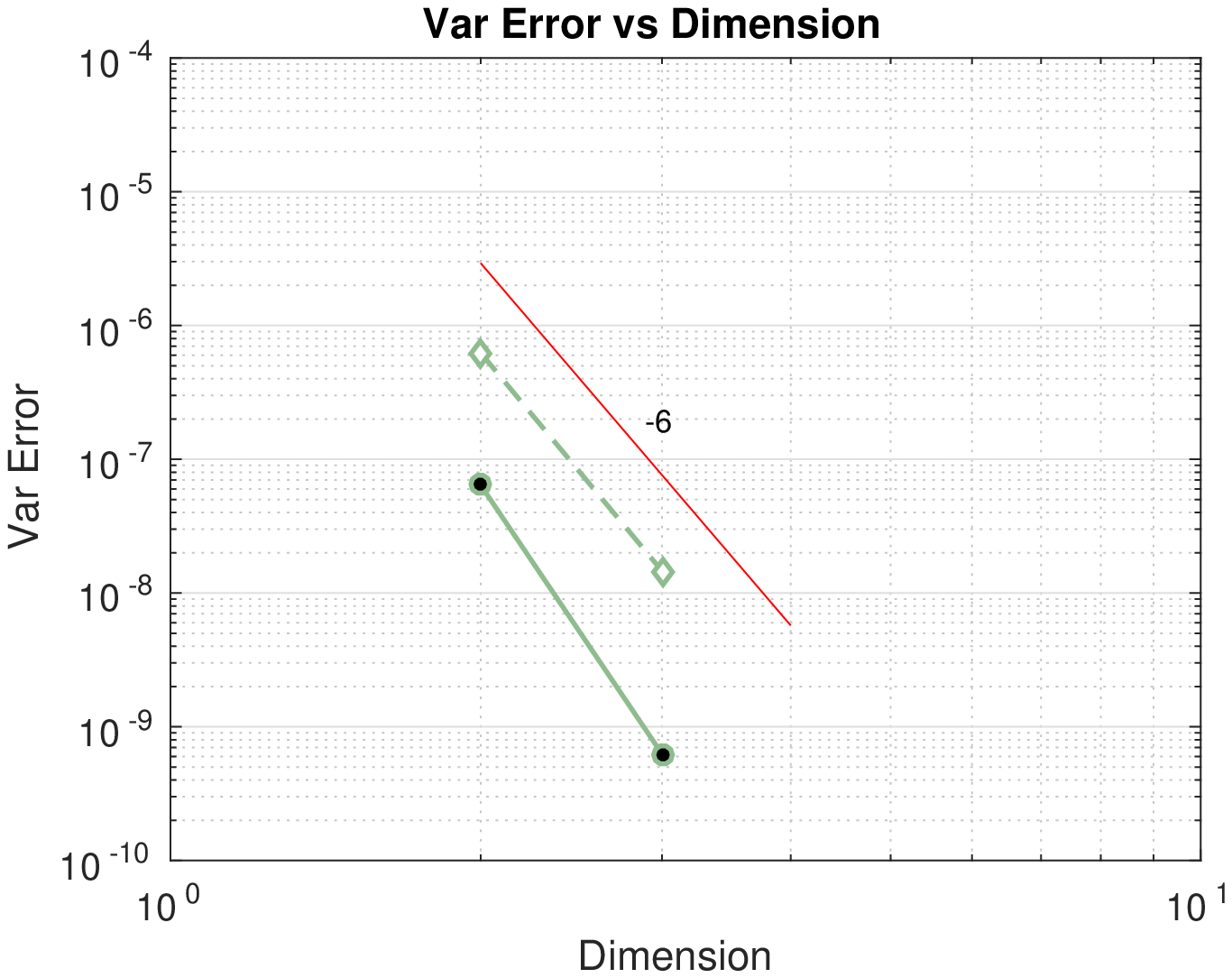} \\
(a) $c = 1/15$, $k = 3$ & (b) $c = 1/15$, $k = 4$ \\
\end{tabular}
\end{center}
\caption{Truncation error with respect to the number of dimensions and
  different decay rates.  (a) Variance error for the pure collocation
  (dashed line) and hybrid collocation-perturbation (solid line)
  methods for $c = 1/15$ and $k = 3$.  (b) Variance error ratio
  between the collocation and hybrid methods for $c = 1/15$ and $k =
  4$. Notice that the accuracy of the hybrid collocation-perturbation
  significantly increases with dimensions.}
\label{results:fig6}
\end{figure}

\section{Conclusions}

\corb{In this paper we propose a new hybrid collocation perturbation
  scheme to computing the statistics of the QoI with respect to random
  domain deformations that are split into large and small deviations.}
The large deviations are approximated with a stochastic collocation
scheme. In contrast, the small deviations components of the QoI are
approximated with a perturbation approach.

\corb{We give a rigorous convergence analysis of the hybrid approach
  based on isotropic Smolyak grids for the approximation of an
  elliptic PDE defined on a random domain.}

We show that for a linear elliptic partial differential equation with
a random domain the variance correction term can be analytically
extended to a well defined region $\Theta_{\beta,N_s}$ embedded in
$\C^{N_s}$ with respect to the random variables. This analysis leads to
a provable subexponential convergence rate of the QoI computed with an
isotropic Clenshaw-Curtis sparse grid. We show that the size of this
region, and the rate of convergence, is directly related to the decay
of the gradient of the stochastic deformation.

This approach is well suited for a moderate to a large number of
stochastic variables. Moreover we can easily extend this approach to
anisotropic sparse grids \cite{nobile2008b} to further increase the
efficiency of our approach with respect to the number of dimensions.

\bibliographystyle{plain}
\bibliography{StochasticDomainHybrid}

\end{document}